\newcommand{\versiondate}{{\footnotesize version October 20, 2025}} 
\theoremstyle{plain}
\newtheorem{theorem}{Theorem}[section]
\newtheorem{lemma}[theorem]{Lemma}
\newtheorem{proposition}[theorem]{Proposition}
\newtheorem{corollary}[theorem]{Corollary}
\theoremstyle{definition}
\newtheorem{definition}[theorem]{Definition}
\newtheorem{example}[theorem]{Example}
\newtheorem{remark}[theorem]{Remark}
\newtheorem{fact}[theorem]{Fact}
\newtheorem{conjecture}[theorem]{{\sc Conjecture}}
\numberwithin{equation}{theorem} 
\newcommand{\New}[1]{\emph{#1}}
\newcommand{\graph}[1]{#1^{\bullet}} 
\newcommand{\Orb}[1][]{\ifthenelse{\equal{#1}{}}{\textrm{Orb}}{\ensuremath{#1}\mbox{-}\textrm{Orb}}}
\newcommand{\um}{\underline{m}}
\DeclareMathOperator{\Sym}{Sym}
\DeclareMathOperator{\End}{End}
\DeclareMathOperator{\Con}{Con}
\DeclareMathOperator{\Pol}{Pol}
\DeclareMathOperator{\Inv}{Inv}
\newcommand{\Inva}[1][]{\Inv^{(#1)}} 
\DeclareMathOperator{\Op}{Op}
\newcommand{\Opa}[1][]{\Op^{(#1)}}
\DeclareMathOperator{\Rel}{Rel}
\newcommand{\Rela}[1][]{\Rel^{(#1)}}
\DeclareMathOperator{\Eq}{Eq} 
\DeclareMathOperator{\Equ}{Eq}
\DeclareMathOperator{\Quord}{Quord} 
\DeclareMathOperator{\gQuord}{gQuord} 
\newcommand{\gQuorda}[1][]{\gQuord^{(#1)}} 
\DeclareMathOperator{\gPord}{gPord} 
\newcommand{\gPorda}[1][]{\gPord^{(#1)}} 
\DeclareMathOperator{\wgPord}{wgPord} 
\newcommand{\wgPorda}[1][]{\wgPord^{(#1)}} 
\DeclareMathOperator{\gEq}{gEq} 
\newcommand{\gEqa}[1][]{\gEq^{(#1)}} 
\DeclareMathOperator{\gCon}{gCon} 
\newcommand{\gCona}[1][]{\gCon^{(#1)}} 
\DeclareMathOperator{\pr}{pr}
\newcommand{\preserves}{\mbox{ $\triangleright$ }}
\newcommand{\id}{\mathsf{id}}
\newcommand{\cK}{\mathcal{K}}
\newcommand{\cL}{\mathcal{L}}
\newcommand{\Mo}{\mathcal{M}}
 \newcommand{\bx}{\mathbf{x}}
\newcommand{\N}{\mathbb{N}}
\newcommand{\Sg}[2][]{\ensuremath{\langle #2 \rangle_{#1}}}
\newcommand{\tra}{\mathsf{tra}}
\newcommand{\trl}[1]{\mathsf{trl}(#1)} 
\newcommand{\tos}{\mathsf{tos}}  
\newcommand{\abs}{\mathsf{abs}}  
\newcommand{\refAU}[1]{\cite[#1]{JakPR2024}}
\let\rho=\varrho
\let\epsilon=\varepsilon
\let\phi=\varphi
\let\kappa=\varkappa
\let \restrictionORIGINAL=\restriction
\renewcommand{\restriction}{\mathclose\restrictionORIGINAL}
\newtheorem*{remarknote}{Remark}
\author{Danica Jakub{\'\i}kov\'a-Studenovsk\'a \footnote{supported by
     Slovak VEGA grant 1/0152/22}
  \\ Institute of
  Mathematics\\ P.J. \v{S}af\'arik University\\ Ko\v sice
  (Slovakia)
\and Reinhard P\"oschel%
 \\Institute of Algebra\\TU Dresden (Germany)
\and S\'andor Radeleczki%
\\Institute of Mathematics\\ University of
    Miskolc (Hungary)}
\date{\versiondate}
\title{
Generalized quasiorders:\\ constructions and characterizations}
\begin{document}

\maketitle


\begin{abstract}

Quasiorders $\rho\subseteq A^{2}$ have the property that  
an operation $f:A^{n}\to A$ preserves $\rho$ if and only if
  each (unary) translation obtained from $f$ is an endomorphism of
  $\rho$. Generalized quasiorders $\rho\subseteq A^{m} $ are
  generalizations of (binary) quasiorders sharing the same property.
  We show how new generalized quasiorders can be obtained
  from given ones using well-known algebraic constructions. 
 Special generalized quasiorders, as generalized
  equivalences and (weak) generalized partial orders, are introduced,
  which extend the
  corresponding notions for binary relations. 
  It turns out that generalized equivalences can be characterized by
  usual equivalence 
  relations. Extending some known results of binary quasiorders, 
  it is shown that generalized quasiorders can be
  \textquotedblleft decomposed\textquotedblright\ uniquely into a
  (weak) generalized partial order and a generalized equivalence.
  Furthermore, generalized quasiorders of maximal clones determined
  by equivalence or partial order relations are investigated.
  If $F=\Pol\rho$ is a maximal clone and $\rho$ an equivalence
  relation or a lattice order, then \textsl{every} relation in $\Inv F$
  is a generalized quasiorder. Moreover, lattice orders are
  characterized by this property among all partial orders.
Finally we prove that each term operation of a rectangular
  algebra gives rise to a generalized partial order.
Some problems requiring further research are also highlighted.
\end{abstract}

\section*{Introduction}\label{sec:intro}

Equivalence relations $\rho$ have the
remarkable well-known property that an
$n$-ary operation $f$ preserves $\rho$ (i.e., $f$ is a polymorphism of
$\rho$) if and only if each translation, i.e., unary polynomial function obtained from
$f$ by substituting constants, preserves $\rho$ (i.e., is an
endomorphism of $\rho$). Checking the proof one
sees that symmetry is not necessary, thus the same property, called
$\Xi$ in \cite[2.2]{JakPR2024}, also holds for quasiorders,
 i.e., reflexive and transitive relations.

With this ``motivating property'' $\Xi$ arose the problem whether there
are further relations satisfying $\Xi$. The answer is yes and given in
our paper \cite{JakPR2024} where the so-called 
\New{generalized quasiorders} are introduced and investigated (on the
base of the Galois connection $\gQuord-\End$). These
relations satisfy the property $\Xi$ (and each relation with $\Xi$ can be
constructed from generalized quasi\-orders).

The present paper is, in some sense, a continuation of
\cite{JakPR2024} in order to get more information which properties do
have generalized quasiorders and where they actually may appear. Since
the notion is new, not much was known up to now.

All needed basic notions and notation are introduced in
\New{Section~\ref{sec:prelim}}.
In the next \New{Section~\ref{sec:constructions}} we show how new generalized
quasiorders can be constructed from given ones using constructions
which might be called ``classical'' in algebra.

In \New{Sections~\ref{sec:equiv} and \ref{sec:gPord}} we introduce special
generalized quasiorders, namely
\New{generalized equivalences} and \New{(weak) generalized partial orders}
which generalize the corresponding notions for binary relations. As
Proposition~\ref{A2c} will show, generalized equivalences can be
uniquely characterized by usual binary equivalence relations and allow
factor constructions (\New{factor relations} and \New{block factor relations},
see Definition \ref{A3}). Under certain conditions these factor
constructions preserve the property of being a generalized quasiorder
(Proposition~\ref{A4}). In Theorem~\ref{thm:characterization} we shall
see that generalized quasiorders can be ``decomposed'' uniquely into a
(weak) generalized partial order and a (generalized) equivalence
(cf.~Corollary~\ref{cor:characterization}). This extends the
corresponding result for binary quasiorders.

With \New{Section~\ref{sec:maxclones}} we start to investigate
concrete algebras $(A,F)$ and ask for invariant
relations in $\Inv F$ being generalized quasiorders. If $F=\Pol\rho$
is a maximal clone determined by an equivalence relation $\rho$ then
\emph{every} invariant relation in $\Inv F$ is a generalized
quasiorder (Theorem~\ref{thm:eqrel}). The same is true if $\rho$ is a
lattice order on $A$ (and lattice
orders are even characterized by this property, see Theorem~\ref{thm:latticeorder}). In both cases
each invariant relation can be obtained from $\rho$ by a
quantifier-free primitive positive formula. For those maximal clones
$F=\Pol\rho$ with $\rho$ being a partial (but not a lattice) order with
$0$ and $1$ there always exist invariant relations which are not
generalized quasiorders (Example~\ref{ex:1}), however  we conjecture that each generalized
quasiorder in $\Inv F$ can be obtained from $\rho$ by a
quantifier-free pp-formula (see
Conjecture~\ref{conj:partialorder}). In the Boolean case (i.e., for
$|A|=2$) all generalized quasiorders can be characterized: they are
just the invariant relations of the clone of monotone Boolean
functions (Theorem~\ref{thm:boolean}).

Finally, in \New{Section~\ref{sec:rectangular}} we look to rectangular
algebras. Here each term operation gives rise to a generalized
quasiorder (which in fact is a generalized partial order,
cf.~Theorem~\ref{thm:rectangular}). In the last
\New{Section~\ref{sec:further}} we discuss some problems and questions for
further research.

\section{Preliminaries}\label{sec:prelim}

We briefly recall or introduce necessary notions and notation. For
more details we refer to \cite{JakPR2024}.

Throughout the paper, $A$ is a finite nonempty (base)
set. $\N:=\{0,1,2,\dots\}$ ($\N_{+}=\{1,2,\dots\}$, resp.) denote the set of
natural numbers (positive, resp.). For $m\in \N_{+}$ let
$\um:=\{1,\dots,m\}$.

\noindent
{\bf\large (A) \hspace{1ex} Operations, relations and clones}
\nopagebreak

  Let $\Opa[n](A)$ and $\Rela[n](A)$ denote the set of all $n$-ary
  operations $f:A^{n}\to A$ and $n$-ary relations $\rho\subseteq
  A^{n}$, $n\in\N_{+}$, respectively. Further, let
  $\Op(A)=\bigcup_{n\in \N_{+}}\Opa[n](A)$ and  
   $\Rel(A)=\bigcup_{n\in \N_{+}}\Rela[n](A)$. 

The so-called \New{projections}
$e^{n}_{i}\in\Opa[n](A)$ are defined by
$e^{n}_{i}(x_{1},\dots,x_{n}):=x_{i}$ ($i\in\{1,\dots,n\}$,
$n\in\N_{+}$). The identity mapping is denoted by $\id_{A}$
($=e^{1}_{1}$). 

For $f\in\Opa[n](A)$ and $m$-ary operations
$g_{1},\dots,g_{n}\in\Opa[m](A)$, the 
\New{composition} $f[g_{1},\dots,g_{n}]$ is the $m$-ary operation given by
$f[g_{1},\dots,g_{n}](\bx):=f(g_{1}(\bx),\dots,g_{n}(\bx))$, $\bx\in A^{m}$.

A \New{clone} is a set $F\subseteq \Op(A)$ of operations which contains
all constants and is closed under composition.

An $m$-ary relation $\delta\in\Rel(A)$
     ($m\in\N_{+}$) is called 
     \New{diagonal relation} if there exists an equi\-valence relation
     $\epsilon$ on the set $\{1,\dots,m\}$ of indices such that 
     $\delta=\{(a_{1},\dots,a_{m})\in A^{m}\mid \forall
     i,j\in\{1,\dots,m\}: (i,j)\in\epsilon\implies
     a_{i}=a_{j}\}$. With $D_{A}$ we denote the set of all diagonal
     relations (of arbitrary finite arity).

Special subsets of $\Rela[2](A)$ are
$\Equ(A)$ and $\Quord(A)$,
i.e., all
\New{equivalence relations} (binary, reflexive, symmetric and transitive) and
\New{quasiorder relations} (binary, reflexive and transitive), respectively,
on the set $A$.

For $f\in\Opa[n](A)$ and $r_{1},\dots,r_{n}\in A^{m}$,
$r_{j}=(r_{j}(1), \dots, r_{j}(m))$,
($n,m\in\N_{+}$, $j\in\{1,\dots,n\}$),
let $f(r_{1},\dots,r_{n})$ denote the $m$-tuple obtained from
componentwise application of $f$, i.e., the $m$-tuple

\centerline{$(f(r_{1}(1),\dots,r_{n}(1)),\dots,f(r_{1}(m),\dots,r_{n}(m)))$.}

An operation $f\in\Opa[n](A)$
  \New{preserves} a relation $\rho\in\Rela[m](A)$ ($n,m\in\N_{+}$) if
for all $r_{1},\dots,r_{n}\in\rho$ we have
  $f(r_{1},\dots,r_{n})\in\rho$, notation $f\preserves\rho$.

 The
Galois connection induced by $\preserves$ gives rise to
several operators as follows. For $Q\subseteq\Rel(A)$ and
$F\subseteq\Op(A)$ let
\begin{align*}
  \Pol Q&:=\{f\in\Op(A)\mid \forall \rho\in Q: f\preserves\rho\}
          &\text{ (\New{polymorphisms}),}\\
   \End Q&:=\{f\in\Opa[1](A)\mid \forall \rho\in Q: f\preserves\rho\} 
 &\text{ (\New{endomorphisms}),}\\
  \Inv F&:=\{\rho\in\Rel(A)\mid \forall f\in F: f\preserves\rho\} 
&\text{ (\New{invariant relations}),}\\
\Con F&:=\Con(A,F):=\Inv F\cap\Eq(A)&\text{ (\New{congruence relations}),}\\
\Quord F&:=\Quord(A,F):=\Inv F\cap\Quord(A)&\text{ (\New{compatible quasiorders}).}
\end{align*}
The Galois closures for $\Pol-\Inv$ are known and can be
characterized as follows: $\Pol\Inv F=\Sg{F}$ (clone generated by
$F$), $\Inv \Pol Q=[Q]_{(\exists,\land,=)}$ (relational clone generated
by $Q$, equivalently characterizable as closure with respect to
primitive positive formulas (pp-formulas), i.e., formulas containing
variable and relational symbols and only $\exists,\land,=$).
We refer to, e.g.,
\cite[1.2.1, 1.2.3, 2.1.3(i)]{PoeK79}, \cite{BodKKR69a},
\cite{Poe04a}, \cite{KerPS2014}.

We also need the closure $[Q]_{(\land,=)}$, i.e., the closure of $Q$
under constructions with quantifier-free pp-formulas (demonstrated
for a binary relation $\rho$ and $Q=\{\rho\}$ in
Lemma~\ref{lem:conjunctions}). 

\noindent
{\bf\large (B) \hspace{1ex} Generalized quasiorders}
\nopagebreak

As explained in the introduction, generalized quasiorders are in the
focus of our interest. We recall from \cite{JakPR2024}:

An $m$-ary relation $\rho\subseteq A^{m}$ ($m\in\N_{+}$) is called  \New{reflexive}
if $(a,\dots,a)\in\rho$ for all $a\in A$, and it is called
\New{(generalized) transitive} if for every $m\times m$-matrix
$(a_{ij})\in A^{m\times m}$ we have:
if every row and every column belongs to $\rho$ -- for this property we
write $\rho\models (a_{ij})$ -- then also the diagonal
$(a_{11},\dots,a_{mm})$ belongs to $\rho$, cf.~Figure~\ref{Fig1}.

\begin{figure}[h]
\includegraphics[scale=0.9]{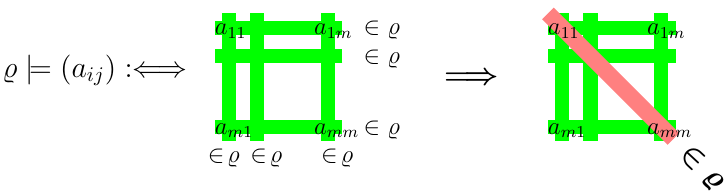}
\caption{Transitivity for an $m$-ary relation $\rho$}\label{Fig1}
\end{figure}

A reflexive and transitive relation is called \New{generalized
quasiorder}. The set of all generalized
quasiorders on the base set $A$ shall be denoted by $\gQuord(A)$, and
$\gQuorda[m](A):=\Rela[m](A)\cap\gQuord(A)$ will denote the $m$-ary
generalized quasiorders.

For $\rho\in \Rela[m](A)$ define 
\begin{align*}
  \partial(\rho)&:=\{(a_{11},\dots,a_{mm})\in A^{m}\mid \exists
(a_{ij})\in A^{m\times m}: \rho\models (a_{ij})\},\\
\rho^{(0)}&:=\rho,\quad
  \rho^{(n+1)}:=\rho^{(n)}\cup\partial(\rho^{(n)})\text{ for } n\in\N.
\end{align*}
 Then 
  $\rho^{\tra}=\bigcup_{n\in\N}\rho^{(n)}$ is the transitive closure
  of $\rho$, i.e., the least transitive relation containing $\rho$ (\cite[3.6]{JakPR2024}). If
  $\rho$ is reflexive then so it does $\rho^{\tra}$. Note that
  $\partial(\rho)$ is expressible via a pp-formula, i.e.,
  $\partial(\rho)\in [\rho]_{(\exists,\land,=)}$.

For $m=2$ we have 
$\partial(\rho)=\{(a,b)\mid \exists 
\left(\begin{smallmatrix}
  a&c\\d&b
\end{smallmatrix}\right): (a,c),(d,b),(a,d),(c,b)\in\rho\}
$, thus
$\partial(\rho)=\rho\circ\rho=\{(a,b)\mid \exists c\in A:
    (a,c)\in\rho\land(c,b)\in\rho\}$
    is the the usual relational product.

\section{Constructions with generalized
  quasiorders}\label{sec:constructions}

In this section we describe several ``classical'' constructions (relational constructions with $(\land,=)$-formulas,
substructures, products, homomorphic images) which produce new
generalized quasiorders from given ones. Further constructions (factor
relations) are considered later in Proposition~\ref{A4}.
  
In preparation of Proposition \ref{C2} we need the following lemma.

\begin{lemma}\label{C1}
  Let $\rho,\sigma\in\gQuorda[m](A)$ and let $\pi$ be a permutation of
  $\{1,\dots,m\}$. Then each of the following relations $\rho^{\pi}$,
  $\nabla\rho$, $\rho\land\sigma$ and $\Delta\rho$ is a
  generalized quasiorder:
  \begin{enumerate}[label=\textup{(\arabic*)}]
  \item\label{C1-1} \text{\rm permutation of coordinates:}\\
   $\rho^{\pi}:=\{(a_{\pi 1},\dots,a_{\pi m})\in A^{m}\mid 
                     (a_{1},\dots,a_{m})\in\rho\}$,

  \item\label{C1-2} \text{\rm adding a fictitious coordinate:}\\
    $\nabla\rho:=\{(a_{1},\dots,a_{m+1})\in A^{m+1}\mid
    (a_{1},\dots,a_{m})\in\rho\}$,
  \item\label{C1-3} \text{\rm intersection:}
  $\rho\land \sigma:=\rho\cap\sigma$,

  \item\label{C1-4} \text{\rm identification of coordinates:}\\
 $\Delta\rho:=\{(a_{1},\dots,a_{m-1})\in A^{m-1}\mid
    (a_{1},a_{1}\dots,a_{m})\in\rho\}$.

  \end{enumerate}
\end{lemma}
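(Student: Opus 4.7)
The plan is to verify reflexivity and transitivity for each of the four constructions separately. Reflexivity is immediate in every case: the constant tuple $(a,\dots,a)$ lies in each of $\rho^{\pi}$, $\nabla\rho$, $\rho\cap\sigma$ and $\Delta\rho$ because it lies in $\rho$ (and in $\sigma$). The substance is therefore transitivity, and the uniform strategy will be: given a matrix $(a_{ij})$ witnessing the transitivity premise for the constructed relation, produce via a suitable reindexing a matrix $(b_{ij})$ whose rows and columns lie in $\rho$ (and $\sigma$, in case (3)), apply generalized transitivity of $\rho$ (and $\sigma$), and then translate the resulting diagonal back through the definition of the construction.

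I would dispatch the routine cases first. For (3), rows and columns of a matrix in $\rho\cap\sigma$ automatically lie in both $\rho$ and $\sigma$, so transitivity of each delivers the diagonal in $\rho\cap\sigma$. For (2), given $(m+1)\times(m+1)$ data with $\nabla\rho\models(a_{ij})$, the upper-left $m\times m$ block $(a_{ij})_{i,j\in\{1,\dots,m\}}$ has all rows and columns in $\rho$, since for any row or column of $(a_{ij})$, belonging to $\nabla\rho$ is the same as the first $m$ entries belonging to $\rho$; transitivity of $\rho$ gives $(a_{11},\dots,a_{mm})\in\rho$ and hence $(a_{11},\dots,a_{m+1,m+1})\in\nabla\rho$.

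Cases (1) and (4) are a bit more delicate because one must reindex carefully. For (1), the key observation is the equivalence $(x_{1},\dots,x_{m})\in\rho^{\pi}\iff(x_{\pi^{-1}(1)},\dots,x_{\pi^{-1}(m)})\in\rho$. Given $\rho^{\pi}\models(a_{ij})$, I would set $b_{ij}:=a_{\pi^{-1}(i),\pi^{-1}(j)}$: the $i$-th row of $(b_{ij})$ is the $\pi^{-1}(i)$-th row of $(a_{ij})$ with its entries permuted by $\pi^{-1}$, and by the above equivalence it lies in $\rho$; the analogous argument handles columns. Transitivity of $\rho$ yields $(b_{11},\dots,b_{mm})=(a_{\pi^{-1}(1),\pi^{-1}(1)},\dots,a_{\pi^{-1}(m),\pi^{-1}(m)})\in\rho$, which unpacks (again by the equivalence) to $(a_{11},\dots,a_{mm})\in\rho^{\pi}$. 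For (4), the analogous reindexing is the surjection $\phi\colon\{1,\dots,m\}\to\{1,\dots,m-1\}$ given by $\phi(1)=\phi(2)=1$ and $\phi(k)=k-1$ for $k\ge 3$; given an $(m-1)\times(m-1)$ matrix $(a_{ij})$ with $\Delta\rho\models(a_{ij})$, set $b_{ij}:=a_{\phi(i),\phi(j)}$. Each row (resp.\ column) of $(b_{ij})$ is exactly the $m$-tuple obtained from the corresponding row (resp.\ column) of $(a_{ij})$ by doubling its first entry, which by definition of $\Delta\rho$ lies in $\rho$. Transitivity of $\rho$ then gives $(a_{11},a_{11},a_{22},\dots,a_{m-1,m-1})\in\rho$, i.e.\ $(a_{11},\dots,a_{m-1,m-1})\in\Delta\rho$.

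The main (mild) obstacle is keeping the index bookkeeping straight in (1) and (4); once the reindexing map is in place the verifications are mechanical. It is also worth noting that all four of the constructed relations are quantifier-free primitive-positively definable from $\rho$ (and $\sigma$), so the lemma is a concrete instance of the expected closure of $\gQuord(A)$ under the operator $[\,\cdot\,]_{(\land,=)}$.
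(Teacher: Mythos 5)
Your proof is correct and takes essentially the same approach as the paper: the paper declares the lemma straightforward and works out only case (2) (exactly as you do), so your careful treatment of (1), (3) and (4) — in particular the reindexing maps $\pi^{-1}$ and $\phi$ — merely supplies detail the paper omits. One caveat on your closing remark: the closure of $\gQuord(A)$ under $[\,\cdot\,]_{(\land,=)}$ is established in the paper only afterwards, with a proof that rests on this very lemma, so that observation is a consequence rather than an alternative justification.
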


\begin{proof} The proof is straightforward using the
  definitions. E.g.~\ref{C1-2}:\\
  $\nabla\rho\models (b_{ij})_{i,j\in\underline{m+1}}\implies
   \rho\models
   (b_{ij})_{i,j\in\um}\stackrel{\text{transitivity}}{\implies}
   (b_{ii})_{i\in\um}\in\rho\implies (b_{ii})_{i\in\underline{m+1}}\in\nabla\rho$.
\end{proof}

\begin{proposition}[constructions with $(\land,=)$-formulas]\label{C2}
  Let $Q\subseteq\gQuord(A)$. Then each relation obtained from $Q$
  by a quantifier-free primitive positive formula is also a
  generalized quasiorder, i.e.,
  $[\gQuord(A)]_{(\land,=)}=\gQuord(A)$. 
Moreover, given an algebra $(A,F)$,
  $F\subseteq\Op(A)$, and $Q\subseteq\gQuord(A,F)$, then we have
  $[Q]_{(\land,=)}\subseteq\gQuord(A,F)$.

\end{proposition}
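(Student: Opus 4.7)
The plan is to reduce the construction of any quantifier-free primitive positive formula to the four elementary relational operations listed in Lemma~\ref{C1}, after first handling the equality atoms. Any quantifier-free pp-formula $\phi(x_{1},\dots,x_{k})$ built from relations in $Q$ is (up to renaming) a finite conjunction of atomic formulas of two shapes: $\sigma(x_{i_{1}},\dots,x_{i_{m}})$ with $\sigma\in Q$, and equalities $x_{i}=x_{j}$. The relation it defines can be built coordinate-by-coordinate from atomic $k$-ary relations using intersection; hence it suffices to show that each atomic constituent, lifted to arity $k$, is a generalized quasiorder, and then invoke closure under intersection.

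First I would observe that every diagonal relation $\delta_{\epsilon}\in D_{A}$ is itself a generalized quasiorder: reflexivity is immediate, and generalized transitivity follows because if rows and columns of $(a_{ij})$ lie in $\delta_{\epsilon}$ then for $(p,q)\in\epsilon$ row $p$ gives $a_{pp}=a_{pq}$ and column $q$ gives $a_{pq}=a_{qq}$, so $a_{pp}=a_{qq}$. In particular the equality relation on $A$ is a generalized quasiorder, and this is what lets equality atoms enter the argument. Each equality atom $x_{i}=x_{j}$ thus contributes a $k$-ary diagonal relation which lies in $\gQuord(A)$.

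Next I would lift each atomic formula to arity $k$ using only operations from Lemma~\ref{C1}: starting from $\sigma\in Q$ (or from a diagonal $\delta\in D_{A}$), apply $\nabla$ repeatedly to add fictitious coordinates for the $x_{\ell}$ not appearing in the atom, and then apply a suitable permutation $\pi$ of coordinates to place the original arguments into their positions $i_{1},\dots,i_{m}$. By parts \ref{C1-1} and \ref{C1-2} of Lemma~\ref{C1}, the resulting $k$-ary relation is a generalized quasiorder. Finally, by part \ref{C1-3}, the intersection of all these $k$-ary lifts — which is precisely the relation defined by $\phi$ — is also a generalized quasiorder. (Part \ref{C1-4} is not strictly required here since equality atoms are handled by diagonal relations; but it would give an alternative route via identification of coordinates.) This proves $[\gQuord(A)]_{(\land,=)}\subseteq\gQuord(A)$; the reverse inclusion is trivial, yielding equality.

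For the \emph{moreover} clause I would combine the above with the standard fact that $\Inv F$ is a relational clone, hence closed under all pp-formulas and a fortiori under $(\land,=)$-formulas (cf.~the references in Section~\ref{sec:prelim}(A)). Given $Q\subseteq\gQuord(A,F)=\Inv F\cap\gQuord(A)$, the first part yields $[Q]_{(\land,=)}\subseteq\gQuord(A)$, while closure of $\Inv F$ gives $[Q]_{(\land,=)}\subseteq\Inv F$; intersecting these inclusions yields $[Q]_{(\land,=)}\subseteq\gQuord(A,F)$. No genuine obstacle arises; the only point that requires a line of care is that equality atoms are covered because diagonal relations are themselves generalized quasiorders, which is why Lemma~\ref{C1} alone — without an explicit clause about $D_{A}$ — is enough.
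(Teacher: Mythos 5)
Your overall strategy is the same as the paper's: decompose the quantifier-free pp-formula into its atomic constituents, lift each one to the ambient arity $k$ via Lemma~\ref{C1}, and finish with closure under intersection. Your treatment of equality atoms (observing that diagonal relations are themselves generalized quasiorders) and of the \emph{moreover} clause (intersecting $[Q]_{(\land,=)}\subseteq\gQuord(A)$ with the relational-clone closure of $\Inv F$) is correct and matches the intended argument.

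There is, however, one concrete gap: your parenthetical claim that part \ref{C1}\ref{C1-4} is not needed is wrong, and the lifting step fails exactly where it would be needed. A quantifier-free pp-formula may contain a relational atom in which a variable occurs more than once, e.g.\ $(x_{3},x_{3},x_{5})\in\sigma$ for a ternary $\sigma\in Q$. Your procedure --- add fictitious coordinates with $\nabla$, then permute --- only produces relations of the form $\{(a_{1},\dots,a_{k})\mid (a_{j_{1}},\dots,a_{j_{m}})\in\sigma\}$ with $j_{1},\dots,j_{m}$ \emph{pairwise distinct}, since no permutation can send two argument slots of $\sigma$ to the same position; and intersecting afterwards with a diagonal relation imposes an additional equality constraint on the output tuple rather than identifying two input slots of $\sigma$, so it does not yield the desired $\tau_{\phi'}$ (indeed, if $k<m$ the target relation is of strictly smaller arity than anything reachable by $\nabla$ and permutations alone). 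The repair is immediate: either apply \ref{C1}\ref{C1-4} (after a suitable permutation) to identify the repeated slots before lifting, or, as the paper does, reduce w.l.o.g.\ to atoms with pairwise distinct indices by trading a repeated occurrence for an extra equality atom. With that one-line fix your argument is complete and is essentially the paper's proof.
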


\begin{proof} Let $\phi(x_{1},\dots,x_{m})\in\Phi(\land,=)$, i.e., $\phi$
  is a conjunction of atomic formulas of the form
  $\phi':\equiv(x_{i_{1}},\dots,x_{i_{s}})\in\sigma$ 
  (with $s$-ary $\sigma\in Q$ and $i_{1},\dots,i_{s}\in\{1,\dots,m\}$).
  W.l.o.g. we add the
  diagonal $\Delta_{A}\in\gQuorda[2](A)$ to $Q$, therefore the atomic formulas $x_{i}=x_{j}$ are included: $x_{i}=x_{j}\iff
  (x_{i},x_{j})\in\Delta_{A}$. Moreover, we can assume that the
  $i_{1},\dots,i_{s}$ are pairwise distinct (otherwise, if
  $i_{j}=i_{j'}$, the atomic 
  formula $\phi'$ can be replaced by
  $(x_{i_{1}},\dots,x_{i_{s}})\in\sigma\land x_{i_{j}}=x_{i_{j'}}$).
  For $\phi'$, we consider the $m$-ary relation $\tau_{\phi'}:=\{(a_{1},
  \dots,a_{m})\in A^{m}\mid (a_{i_{1}},\dots,a_{i_{s}})\in\sigma\}$ (all
  components except the selected $i_{1},\dots,i_{s}$ are
  fictitious). Clearly, the $m$-ary relation, say $\rho$,
 defined by $\phi$ is the
  intersection of all $\tau_{\phi'}$ ($\phi'$ beeing an atomic
  formula of $\phi$). 

Because $\gQuord(A)$ is closed under intersection (cf.~\ref{C1}\ref{C1-3}) and
adding fictitious components (cf.~\ref{C1}\ref{C1-2}), and because $\tau_{\phi'}$
can be obtained from $\sigma\in Q\subseteq \gQuord(A)$ by adding
fictitious components (and possibly permutation of coordinates, \ref{C1}\ref{C1-1}), we can
conclude that $\rho$ is also a generalized quasiorder.
\end{proof}

\begin{remarknote}
 It is known that the closure $[Q]_{(\land,=)}$
is equivalent to the closure under the following
  constructions:
intersection, adding fictive components, identification of components,
doubling of components, using the trivial unary relation $A$ (then all
diagonal relations $D_{A}$ are included), for more details concerning
these operations we refer to \cite[pp.~42, 43, 67]{PoeK79}.
\end{remarknote}


For $\rho_{1}\in\Rela[m](A_{1})$, $\rho_{2}\in\Rela[m](A_{2})$ and
$A:=A_{1}\times A_{2}$
we define the ``direct product'' as follows (cf.\ \refAU{4.6}):
\begin{align*}
    \rho_{1}\otimes\rho_{2}:=\{((a_{1},b_{1}),\dots,(a_{m},b_{m}))\in
                             A^{m}\mid 
(a_{1}&,\dots,a_{m})\in\rho_{1}\\&\land (b_{1},\dots,b_{m})\in\rho_{2}\}.
\end{align*}
The following proposition shows that this construction is compatible
with generalized quasiorders. This was stated and already proved as a part of
the proof of \refAU{Proposition~4.7(i)}.

\begin{proposition}[direct products]\label{dirprod}
   \begin{align*}
    \rho_{1}\otimes\rho_{2}\in\gQuord(A) \iff
    \rho_{1}\in\gQuord(A_{1}) \text{ and }\rho_{2}\in\gQuord(A_{2}).\qed
  \end{align*}
\end{proposition}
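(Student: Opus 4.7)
The plan is to prove both implications by separately verifying reflexivity and transitivity, exploiting the fact that the defining condition of $\rho_{1}\otimes\rho_{2}$ decomposes coordinate-wise.

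For the reflexivity part, I would observe that a diagonal tuple $((a,b),\dots,(a,b))$ with $(a,b)\in A_{1}\times A_{2}$ lies in $\rho_{1}\otimes\rho_{2}$ if and only if $(a,\dots,a)\in\rho_{1}$ and $(b,\dots,b)\in\rho_{2}$. Since $A_{1},A_{2}$ are nonempty, reflexivity of $\rho_{1}\otimes\rho_{2}$ is equivalent to reflexivity of both $\rho_{1}$ and $\rho_{2}$ (one direction by ranging $(a,b)$ over $A$; the other by forming pairs with the existing elements in the opposite factor).

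For transitivity, the main observation is this decomposition: given an $m\times m$ matrix $(c_{ij})\in A^{m\times m}$ with $c_{ij}=(a_{ij},b_{ij})$, we have
\[
\rho_{1}\otimes\rho_{2}\models(c_{ij})\iff \rho_{1}\models(a_{ij})\text{ and }\rho_{2}\models(b_{ij}),
\]
simply because a row (column) of $(c_{ij})$ lies in $\rho_{1}\otimes\rho_{2}$ iff the corresponding rows (columns) of $(a_{ij})$ and $(b_{ij})$ lie in $\rho_{1}$ and $\rho_{2}$, respectively. The forward implication of transitivity then follows immediately: if $\rho_{1}$ and $\rho_{2}$ are transitive, then $\rho_{1}\otimes\rho_{2}\models(c_{ij})$ forces $(a_{11},\dots,a_{mm})\in\rho_{1}$ and $(b_{11},\dots,b_{mm})\in\rho_{2}$, hence $(c_{11},\dots,c_{mm})\in\rho_{1}\otimes\rho_{2}$.

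For the converse direction of transitivity, suppose $\rho_{1}\otimes\rho_{2}\in\gQuord(A)$. Reflexivity of $\rho_{2}$ (already established) allows me to pad any witness matrix for $\rho_{1}$: given $\rho_{1}\models(a_{ij})$, fix any $b_{0}\in A_{2}$, set $b_{ij}:=b_{0}$ for all $i,j$, so that every row and column of $(b_{ij})$ is the constant tuple $(b_{0},\dots,b_{0})\in\rho_{2}$. Then the combined matrix $(c_{ij}):=(a_{ij},b_{ij})$ satisfies $\rho_{1}\otimes\rho_{2}\models(c_{ij})$, and transitivity of $\rho_{1}\otimes\rho_{2}$ yields $((a_{11},b_{0}),\dots,(a_{mm},b_{0}))\in\rho_{1}\otimes\rho_{2}$, hence $(a_{11},\dots,a_{mm})\in\rho_{1}$. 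The symmetric argument gives transitivity of $\rho_{2}$.

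I do not foresee a serious obstacle: the argument is just a bookkeeping check, with the only subtle point being the need to invoke reflexivity of one factor in order to run the transitivity argument for the other. This is why both properties must be handled in tandem rather than independently.
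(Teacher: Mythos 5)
Your argument is correct and complete: the componentwise decomposition $\rho_{1}\otimes\rho_{2}\models(c_{ij})\iff\rho_{1}\models(a_{ij})\text{ and }\rho_{2}\models(b_{ij})$, together with the padding of a witness matrix by a constant from the other (reflexive) factor for the converse, is exactly the standard argument, and you correctly identify the one subtle point, namely that transitivity of one factor cannot be extracted without first securing reflexivity of the other. The paper itself gives no proof here but defers to Proposition~4.7(i) of \cite{JakPR2024}, so your write-up simply supplies the omitted details in the expected way.
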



Now let us consider restrictions to subsets. Let $\emptyset\neq B\subseteq A$ and $\rho\restriction_{B}:=\rho\cap
B^{m}$ for $\rho\in\Rela[m](A)$. For a set $Q\subseteq\Rel(A)$ we put
$Q\restriction_{B}:=\{\rho\restriction_{B}\mid \rho\in Q\}$. Then we
have the following result (which is a special case of \refAU{4.8}, namely for $M=\{\id_{A}\}$).

\begin{proposition}[restriction to subsets]\label{restriction}
      $\gQuord(A)\restriction_{B}=\gQuord(B)$.\qed
\end{proposition}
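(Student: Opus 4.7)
The plan is to establish the two inclusions separately. The forward direction is a routine verification from the definitions, while the reverse direction requires constructing, from a given $\sigma\in\gQuord(B)$, an extension $\rho\in\gQuord(A)$ whose restriction to $B$ recovers $\sigma$.

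For $\gQuord(A)\restriction_{B}\subseteq\gQuord(B)$, I would take $\rho\in\gQuorda[m](A)$ and show directly that $\rho':=\rho\cap B^{m}$ is reflexive and transitive on $B$. Reflexivity of $\rho'$ follows from $B\subseteq A$ together with reflexivity of $\rho$; transitivity uses that any $m\times m$ matrix $(b_{ij})$ over $B$ with $\rho'\models(b_{ij})$ automatically satisfies $\rho\models(b_{ij})$, so transitivity of $\rho$ places its diagonal in $\rho\cap B^{m}=\rho'$.

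For the reverse inclusion $\gQuord(B)\subseteq\gQuord(A)\restriction_{B}$, given $\sigma\in\gQuorda[m](B)$ the natural candidate extension is
\[
\rho:=\sigma\cup\{(a,a,\dots,a)\in A^{m}\mid a\in A\}.
\]
Reflexivity of $\rho$ on $A$ is built in by construction, and the identity $\rho\restriction_{B}=\sigma$ follows from the fact that the constant $B$-tuples already belong to $\sigma$ by reflexivity of $\sigma$. The only nontrivial step is to verify transitivity of $\rho$ on $A$.

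To check transitivity, I would take an $m\times m$ matrix $(a_{ij})$ over $A$ with $\rho\models(a_{ij})$ and split into two cases. If every entry $a_{ij}$ lies in $B$, then each row and each column (using reflexivity of $\sigma$ for the constant ones) already lies in $\sigma$, and transitivity of $\sigma$ yields the diagonal inside $\sigma\subseteq\rho$. Otherwise some entry $a_{i_{0}j_{0}}\notin B$: row $i_{0}$ cannot lie in $\sigma\subseteq B^{m}$, so it must be the constant $A$-tuple of value $a_{i_{0}j_{0}}$; consequently every column contains the off-$B$ value $a_{i_{0}j_{0}}$ in position $i_{0}$ and by the same argument is constant of value $a_{i_{0}j_{0}}$, so the entire matrix is globally constant equal to $a_{i_{0}j_{0}}$ and its diagonal is a constant $A$-tuple in $\rho$. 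This percolation argument is the expected main obstacle; everything else is immediate from the definitions.
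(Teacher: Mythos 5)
Your proof is correct. Both inclusions are verified soundly: the forward direction is indeed immediate, and for the reverse direction your candidate $\rho:=\sigma\cup\Delta_{A}^{(m)}$ (with $\Delta_{A}^{(m)}$ the set of constant $m$-tuples over $A$) works. The crucial percolation step is handled properly: since $\sigma\subseteq B^{m}$, any tuple of $\rho$ containing an entry outside $B$ must be constant, so a single off-$B$ entry forces its row to be constant, hence forces every column to contain that off-$B$ value and thus to be constant, making the whole matrix constant with diagonal in $\Delta_{A}^{(m)}\subseteq\rho$. The identity $\rho\restriction_{B}=\sigma$ also holds because the constant $B$-tuples are already in $\sigma$ by reflexivity.

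The paper itself gives no direct argument here: the proposition is stated with a reference to \cite[4.8]{JakPR2024} (the special case $M=\{\id_{A}\}$ of a result proved there in the setting of the Galois connection $\gQuord$--$\End$). Your argument is therefore a genuinely different, self-contained and elementary route: rather than invoking the machinery of the earlier paper, you exhibit an explicit extension of a generalized quasiorder on $B$ to one on $A$. What your approach buys is transparency and independence from external results; what the citation buys the authors is brevity and a uniform treatment (the cited result handles restrictions relative to an arbitrary monoid $M$, of which this is the trivial instance). One could also note that your extension $\sigma\cup\Delta_{A}^{(m)}$ is the smallest reflexive relation on $A$ restricting to $\sigma$, which makes it a natural canonical choice.
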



Finally we consider ``homomorphisms''. Let $\lambda:A\to B$ be a surjective mapping. For $\rho\in\Rela[m](A)$
and $\sigma\in\Rela[m](B)$ we define
\begin{align*}
  \lambda(\rho)&:=\{(\lambda a_{1},\dots,\lambda a_{m})\in B^{m}\mid
                  (a_{1},\dots,a_{m})\in\rho\}\in\Rela[m](B),\\
  \lambda^{-1}(\sigma)&:=\{(a_{1},\dots,a_{m})\in A^{m}\mid
                 (\lambda a_{1},\dots,\lambda a_{m})\in\sigma\}\in\Rela[m](A).
\end{align*}
Then we have:

\begin{proposition}\label{homomorphism}
  If $\rho\in\gQuord(A)$ has the property
  $\lambda^{-1}(\lambda(\rho))=\rho$ then
  $\lambda(\rho)\in\gQuord(B)$. Conversely, if $\sigma\in\gQuord(B)$
  and $\rho:=\lambda^{-1}(\sigma)$,
  then $\rho\in\gQuord(A)$ and $\rho$
  has the property $\lambda^{-1}(\lambda(\rho))=\rho$.
\end{proposition}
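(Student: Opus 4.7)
The plan is to verify the two directions separately, and in each case to establish reflexivity and generalized transitivity by pulling the structure back through $\lambda$. Throughout I would exploit the fact that the condition $\lambda^{-1}(\lambda(\rho))=\rho$ says precisely that membership of a tuple in $\rho$ depends only on its componentwise $\lambda$-image, so I may freely replace any tuple by another with the same image.

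For the forward direction, reflexivity of $\lambda(\rho)$ is immediate: surjectivity of $\lambda$ gives a preimage $a$ of any $b\in B$, and $(a,\dots,a)\in\rho$ yields $(b,\dots,b)\in\lambda(\rho)$. For generalized transitivity I start with an $m\times m$-matrix $(b_{ij})$ with $\lambda(\rho)\models(b_{ij})$ and choose \emph{any} preimage matrix $(a_{ij})\in A^{m\times m}$ with $\lambda(a_{ij})=b_{ij}$ (coordinatewise). Each row of $(a_{ij})$ $\lambda$-maps to a tuple in $\lambda(\rho)$, hence by the hypothesis $\lambda^{-1}(\lambda(\rho))=\rho$ lies in $\rho$; likewise for columns. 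Transitivity of $\rho$ then gives $(a_{11},\dots,a_{mm})\in\rho$, and applying $\lambda$ yields $(b_{11},\dots,b_{mm})\in\lambda(\rho)$, as required.

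For the converse direction I set $\rho:=\lambda^{-1}(\sigma)$. Reflexivity transfers directly from $\sigma$ to $\rho$, and generalized transitivity follows the mirror pattern: if $\rho\models(a_{ij})$, then each row and column of $(a_{ij})$ is in $\rho$, i.e.\ their $\lambda$-images are in $\sigma$, so $\sigma\models(\lambda a_{ij})$ gives the $\sigma$-diagonal, which pulls back to the $\rho$-diagonal. Finally, the inclusion $\rho\subseteq\lambda^{-1}(\lambda(\rho))$ is automatic, and the reverse inclusion is where I would again use $\rho=\lambda^{-1}(\sigma)$: any tuple in $\lambda^{-1}(\lambda(\rho))$ shares its componentwise $\lambda$-image with some tuple of $\rho$, and that image lies in $\sigma$ by definition of $\rho$, so the original tuple lies in $\lambda^{-1}(\sigma)=\rho$.

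The only step where anything slightly delicate happens is the transitivity argument in the forward direction, where one might worry that the rows and columns of $(b_{ij})$ witnessing $\lambda(\rho)\models(b_{ij})$ come from different tuples of $\rho$ and need not fit together into one coherent preimage matrix. The assumption $\lambda^{-1}(\lambda(\rho))=\rho$ is tailor-made to defuse exactly this issue: it lets me fix an arbitrary preimage matrix $(a_{ij})$ componentwise and still guarantees that every row and column lands in $\rho$. Once this observation is in place, both directions reduce to routine checks of the two defining properties of a generalized quasiorder.
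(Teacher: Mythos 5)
Your proposal is correct and follows essentially the same route as the paper's proof: reflexivity via surjectivity, transitivity by lifting an arbitrary componentwise preimage matrix and using $\lambda^{-1}(\lambda(\rho))=\rho$ to place its rows and columns in $\rho$, and the final identity by unwinding $\lambda^{-1}(\lambda(\lambda^{-1}(\sigma)))=\lambda^{-1}(\sigma)$. The "delicate point" you single out — that the hypothesis lets you pick any preimage matrix without worrying about coherence — is exactly the observation the paper's argument rests on.
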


\begin{proof} Let $\rho\in\gQuord(A)$ satisfy
  $\lambda^{-1}(\lambda(\rho))=\rho$.
Then $\lambda(\rho)$ is reflexive. In fact, let $b\in B$, then there
exist $a\in A$ with $b=\lambda(a)$ (since $\lambda$ is
surjective). Then $(a,\dots,a)\in\rho$ (by reflexivity of $\rho$)
implies $(b,\dots,b)\in\lambda(\rho)$. It remains to show transitivity
of $\lambda(\rho)$. Let $\lambda(\rho)\models (b_{ij})_{i,j\in\um}$.
By surjectivity of $\lambda$ we can choose $a_{ij}\in A$ such that
$\lambda(a_{ij})=b_{ij}$. Thus each row and column of
$(a_{ij})_{i,j\in\um}$ belongs to $\lambda^{-1}(\lambda(\rho))=\rho$,
consequently $\rho\models (a_{ij})_{i,j\in\um}$. By transitivity of
$\rho$ we get $(a_{11},\dots,a_{mm})\in\rho$, thus
$(b_{11},\dots,b_{mm})=(\lambda(a_{11}),\dots,\lambda(a_{mm}))\in\lambda(\rho)$
proving transitivity of $\lambda(\rho)$.

Conversely, let $\sigma\in\gQuord(B)$ and
$\rho:=\lambda^{-1}(\sigma)$. Then $\rho$ is reflexive since each
$(a,\dots,a)\in A^{m}$ belongs to $\lambda^{-1}(\sigma)$ (by
surjectivity of $\lambda$ and reflexivity of $\sigma$). $\rho$ is also
transitive: Let $\rho\models(a_{ij})_{i.j\in\um}$, then each row and
column belongs to $\rho=\lambda^{-1}(\sigma)$, thus the
$\lambda$-image of each row and column is in $\sigma$, i.e.,
$\sigma\models(\lambda(a_{ij}))_{ij\in\um}$. By transitivity of
$\sigma$ we have $(\lambda(a_{11}),\dots,\lambda(a_{mm}))\in\sigma$,
consequently $(a_{11},\dots,a_{mm})\in\lambda^{-1}(\sigma)=\rho$,
showing transitivity of $\rho$. It remains to mention that
$\lambda^{-1}(\lambda(\rho))=\lambda^{-1}(\lambda(\lambda^{-1}(\sigma)))=\lambda^{-1}(\sigma)=\rho$.
\end{proof}

The property $\lambda^{-1}(\lambda(\rho))=\rho$ which plays a crucial
role in \ref{homomorphism}, can be characterized by a property of
$\ker\lambda$ as follows.

\begin{definition}[exchange property]\label{def:exchange}
Let $\psi\in\Eq(A)$ and $\rho\in\Rela[m](A)$ 

Then we say that $\psi$ has the \New{exchange property with respect
  to $\rho$}
if $(a_{1},\dots,a_{m})\in\rho$ and $(a_{i},b_{i})\in\psi$ for
$i\in\{1,\dots,m\}$ implies $(b_{1},\dots,b_{m})\in\rho$,
equivalently, if 
\begin{align*}\label{exch1}\tag*{\ref{def:exchange}(1)}
  (a_{1},\dots,a_{m})\in\rho\implies [a_{1}]_{\psi}\times\ldots\times[a_{m}]_{\psi}\subseteq\rho
\end{align*}
for all $a_{1},\dots,a_{m}\in A$.
  
\end{definition}

\begin{proposition}\label{prop:lambdaexch} Let $\lambda:A\to B$ be a
  surjective mapping and $\rho\in\Rel(A)$.

Then $\lambda^{-1}(\lambda(\rho))=\rho$ if and only if $\ker\lambda$
has the exchange property with respect to $\rho$.
  
\end{proposition}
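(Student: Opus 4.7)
The plan is to unfold both sides of the claimed equivalence so that they become visibly the same statement about $\ker\lambda$. The key observation is a pointwise description of $\lambda^{-1}(\lambda(\rho))$: a tuple $(b_1,\dots,b_m)$ belongs to $\lambda^{-1}(\lambda(\rho))$ if and only if there exists $(a_1,\dots,a_m)\in\rho$ with $\lambda(a_i)=\lambda(b_i)$ for all $i\in\um$, i.e.\ with $(a_i,b_i)\in\ker\lambda$ for every $i$. Since $\ker\lambda$ is reflexive, taking $a_i:=b_i$ yields the inclusion $\rho\subseteq\lambda^{-1}(\lambda(\rho))$ as an \emph{automatic} fact (no assumption needed). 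Consequently, the condition $\lambda^{-1}(\lambda(\rho))=\rho$ reduces to the reverse inclusion $\lambda^{-1}(\lambda(\rho))\subseteq\rho$.

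For the forward direction I would assume $\lambda^{-1}(\lambda(\rho))=\rho$ and then verify \ref{def:exchange}(1) directly: starting from $(a_1,\dots,a_m)\in\rho$ and $(a_i,b_i)\in\ker\lambda$, I get $\lambda(b_i)=\lambda(a_i)$, hence $(\lambda b_1,\dots,\lambda b_m)=(\lambda a_1,\dots,\lambda a_m)\in\lambda(\rho)$, which places $(b_1,\dots,b_m)$ in $\lambda^{-1}(\lambda(\rho))=\rho$. So $[a_1]_{\ker\lambda}\times\cdots\times[a_m]_{\ker\lambda}\subseteq\rho$, which is exactly the exchange property.

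For the converse I would assume $\ker\lambda$ has the exchange property and show $\lambda^{-1}(\lambda(\rho))\subseteq\rho$. Given $(b_1,\dots,b_m)\in\lambda^{-1}(\lambda(\rho))$, the pointwise description above supplies a witness $(a_1,\dots,a_m)\in\rho$ with $(a_i,b_i)\in\ker\lambda$ for all $i$; the exchange property applied to this witness yields $(b_1,\dots,b_m)\in\rho$, as required.

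There is no real obstacle in this proof; the only subtlety is the bookkeeping that translates ``having a $\rho$-preimage in each $\ker\lambda$-class'' into the exchange condition \ref{def:exchange}(1), and noting that surjectivity of $\lambda$ is not needed here at all — it was used earlier in Proposition~\ref{homomorphism} to guarantee reflexivity of $\lambda(\rho)$, but the present characterization holds for any mapping $\lambda$.
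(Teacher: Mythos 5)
Your proof is correct and follows essentially the same route as the paper's: both directions are handled by the same pointwise unfolding of $\lambda^{-1}(\lambda(\rho))$, with the reverse inclusion noted as trivial. Your side observation that surjectivity of $\lambda$ is not needed here is accurate (the paper's proof does not use it either).
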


\begin{proof} Let $\rho$ be $m$-ary.

``$\Longrightarrow$'': Assume $\lambda^{-1}(\lambda(\rho))=\rho$ and
let $(a_{1},\dots,a_{m})\in\rho$, $(a_{i},b_{i})\in\ker\lambda$. Then
$(\lambda b_{1},\dots,\lambda b_{m})=
      (\lambda a_{1}, \dots,\lambda a_{m})\in\lambda(\rho)$,
thus $(b_{1},\dots,b_{m})\in\lambda^{-1}(\lambda(\rho))=\rho$, i.e.,
$\ker\lambda$ has the exchange property w.r.t.\ $\rho$.

``$\Longleftarrow$'': Assume that $\ker\lambda$
has the exchange property with respect to $\rho$. It is suffient to
prove $\lambda^{-1}(\lambda(\rho))\subseteq\rho$ (the other inclusion
is trivial).\\ Let $(b_{1},\dots,b_{m})\in\lambda^{-1}(\lambda(\rho)) $,
i.e., $(\lambda b_{1},\dots,\lambda b_{m})\in\lambda(\rho)$. Therefore
there must exist $(a_{1},\dots,a_{m})\in\rho$ with $(\lambda
b_{1},\dots,\lambda b_{m})=(\lambda a_{1},\dots,\lambda
a_{m})$, i.e., $(a_{i},b_{i})\in\ker\lambda$. Thus $(b_{1},\dots,b_{m})\in
[a_{1}]_{\ker\lambda}\times\ldots\times[a_{m}]_{\ker\lambda}\stackrel{\ref{exch1}}{\subseteq}\rho$,
  showing $\lambda^{-1}(\lambda(\rho))\subseteq\rho$.
\end{proof}

\section{Generalized equivalence relations}\label{sec:equiv}

Equivalence relations are symmetric quasiorders (reflexive,
transitive). We generalize the notion of symmetry to $m$-ary relations
in order to get an $m$-ary counterpart of ordinary equivalence relations.

\begin{definition}\label{def:gEq}
For a mapping $\alpha:\{1,\dots,m\}\to \{1,\dots,m\}$ (we can write
$\alpha\in\um^{\um}$)  and $\rho\subseteq A^{m}$
we define (cf.~\ref{C1}\ref{C1-1} for permutations)
\begin{align*}
  \rho^{\alpha}&:=\{(a_{\alpha 1},\dots,a_{\alpha m})\mid
                 (a_{1},\dots,a_{m}) \in\rho\}.
\end{align*}
A relation $\rho\subseteq A^{m}$ is called \New{totally symmetric} or 
\New{absolutely symmetric}, respectively, if
$\rho^{\alpha}\subseteq\rho$ (i.e., $(a_{1},\dots,a_{m})\in\rho\implies
(a_{\alpha 1},\dots,a_{\alpha m})\in\rho$) for all permutations $\alpha\in\Sym(\{1,\dots,m\})$ or
for all mappings $\alpha\in\um^{\um}$, respectively. Equivalently, an
absolutely symmetric relation $\rho$ can be defined by the property that
$\{a_{1},\dots,a_{m}\}^{m}\subseteq\rho$ for all
$(a_{1},\dots,a_{m})\in\rho$.

A relation $\rho\subseteq A^{m}$ is called \New{generalized
  equivalence relation} if it is reflexive, totally symmetric and
transitive, i.e., if it is a totally symmetric generalized
quasiorder. The set of 
all generalized equivalence relations is denoted by $\gEq(A)$
($\gEqa[m](A)$ for $m$-ary relations). In
Proposition~\ref{A2c} 
we shall see that a generalized equivalence relation is also
absolutely symmetric.

For a relation $\rho\in\Rela[m](A)$, the \New{totally
  symmetric part} $\tos(\rho)$ and the \New{absolutely
  symmetric part} $\abs(\rho)$, resp., of $\rho$ are defined 
as follows:
\begin{align*}
\label{gEq-1}
\tos(\rho)&:=\{(a_{1},\dots,a_{m})\mid
    \forall\alpha\in\Sym(\um): (a_{\alpha 1},\dots,a_{\alpha m})\in\rho\}
\tag*{\ref{def:gEq}(1)}\\
 &\phantom{:}=\bigcap\{\rho^{\alpha}\mid \alpha\in\Sym(\um)\},\\
\label{gEq-2}
  \abs(\rho)&:=\{(a_{1},\dots,a_{m})\mid\{a_{1},\dots,a_{m}\}^{m}\subseteq\rho\}
\tag*{\ref{def:gEq}(2)}\\
\label{gEq-3}
      &\phantom{:}=\{(a_{1},\dots,a_{m})\mid
    \forall\alpha\in\um^{\um}: (a_{\alpha 1},\dots,a_{\alpha m})\in\rho\}.
\tag*{\ref{def:gEq}(3)}
\end{align*}

For the second characterization of $\tos(\rho)$ note that 
$ (a_{1},\dots,a_{m})\in\rho^{\alpha}\iff 
   (a_{\alpha^{-1} 1},\dots,a_{\alpha^{-1} m})\in\rho$
 and $\alpha\in\Sym(\um)\iff\alpha^{-1}\in\Sym(\um)$.
The equivalence of (2) and (3) is a consequence
of the fact that for any elements $a_{1},\dots,a_{m}\in A$ we have
$\{a_{1},\dots,a_{m}\}^{m}=\{(a_{\alpha 1},\dots,a_{\alpha m})\mid
\alpha\in\um^{\um}\}$.

The \New{binary symmetric part} of $\rho$ is defined by
\begin{align*}\label{gEq-4}\tag*{\ref{def:gEq}(4)}
  \rho^{[2]}:=\{(a,b)\in A\mid \{a,b\}^{m}\subseteq\rho\}.
\end{align*}

\end{definition}

\begin{remark}\label{rem:gEq}
In general,
$\tos(\rho)$, $\abs(\rho)$ as well as $\rho^{[2]}$ can be empty. However, for reflexive $\rho$
the definition implies that $\tos(\rho)$ ($\abs(\rho)$, resp.) is a
reflexive and totally (absolutely, resp.)
symmetric relation.
Likewise $\rho^{[2]}$ is
reflexive and symmetric, i.e., a tolerance relation. 

Moreover we have
$\abs(\rho^{[2]})=\rho^{[2]}=(\abs(\rho))^{[2]}$ as can be checked easily.
Clearly, $\tos(\rho)\subseteq\rho$ and $\abs(\rho)\subseteq\rho$, and $\rho$ is totally (absolutely, resp.) symmetric if and only
if $\tos(\rho)=\rho$ ($\abs(\rho)=\rho$, resp.).

\begin{lemma}\label{lem:intersection1}
  Let $\rho_{i}\in\Rela[m](A)$, $i\in I$. Then
  \begin{enumerate}[label=\textup{(\alph*)}]

  \item\label{intersection1-a}
$  \bigcap_{i\in I}\tos(\rho_{i})=\tos(\bigcap_{i\in
      I}\rho_{i})$ and $\bigcap_{i\in I}\abs(\rho_{i})=\abs(\bigcap_{i\in
      I}\rho_{i})$,

  \item\label{intersection1-b} $\bigcap_{i\in I}\rho_{i}^{[2]}=(\bigcap_{i\in I}\rho_{i})^{[2]}$, 

  \item\label{intersection1-c} The construction $\rho\mapsto\rho^{[2]}$ is monotone with respect
    to inclusion, i.e., $\sigma\subseteq\rho\in\Rela[m](A)$ implies
    $\sigma^{[2]}\subseteq\rho^{[2]}$.
  \end{enumerate}
\end{lemma}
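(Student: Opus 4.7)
The plan is to reduce all three parts of Lemma~\ref{lem:intersection1} to routine set-theoretic manipulations starting directly from Definition~\ref{def:gEq}; none of them requires anything beyond the formal properties of coordinate permutations and intersections. The only auxiliary ingredient I would record explicitly is the elementary identity
\[
  \Bigl(\bigcap_{i\in I}\rho_{i}\Bigr)^{\alpha}=\bigcap_{i\in I}\rho_{i}^{\alpha},
\]
valid for every $\alpha\in\um^{\um}$, which is immediate from the definition of $\rho^{\alpha}$ since a tuple lies in $\rho^{\alpha}$ precisely when its coordinate-reindexed preimage lies in $\rho$, a condition quantified pointwise over $i\in I$.

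For part~\ref{intersection1-a} I would use the second characterisation in \ref{gEq-1}, namely $\tos(\rho)=\bigcap_{\alpha\in\Sym(\um)}\rho^{\alpha}$. Substituting the auxiliary identity above and swapping the order of the two intersections gives
\[
  \bigcap_{i\in I}\tos(\rho_{i})=\bigcap_{i\in I}\bigcap_{\alpha\in\Sym(\um)}\rho_{i}^{\alpha}=\bigcap_{\alpha\in\Sym(\um)}\Bigl(\bigcap_{i\in I}\rho_{i}\Bigr)^{\alpha}=\tos\Bigl(\bigcap_{i\in I}\rho_{i}\Bigr).
\]
The argument for $\abs$ is word-for-word the same, invoking characterisation~\ref{gEq-3} and replacing $\Sym(\um)$ by $\um^{\um}$.

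For part~\ref{intersection1-b} I would just unwind definition~\ref{gEq-4}: a pair $(a,b)$ lies in $(\bigcap_{i\in I}\rho_{i})^{[2]}$ iff $\{a,b\}^{m}\subseteq\bigcap_{i\in I}\rho_{i}$, iff $\{a,b\}^{m}\subseteq\rho_{i}$ for every $i\in I$, iff $(a,b)\in\rho_{i}^{[2]}$ for every $i$. Part~\ref{intersection1-c} is then a one-line consequence, since $\sigma\subseteq\rho$ together with $\{a,b\}^{m}\subseteq\sigma$ yields $\{a,b\}^{m}\subseteq\rho$, so $(a,b)\in\rho^{[2]}$; alternatively it follows from \ref{intersection1-b} applied to the family $\{\sigma,\rho\}$. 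There is no genuine obstacle here — the whole lemma is a formal consequence of the definitions once the commutativity of intersection with $\rho\mapsto\rho^{\alpha}$ is in hand — so the main task is simply to present these steps cleanly rather than to surmount any technical difficulty.
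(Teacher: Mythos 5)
Your parts \ref{intersection1-b} and \ref{intersection1-c} are correct and coincide with the paper's own proof (the paper proves \ref{intersection1-b} by exactly the same chain of equivalences and dismisses \ref{intersection1-a} and \ref{intersection1-c} as immediate from the definitions). The one genuine problem is the auxiliary identity you record, $(\bigcap_{i\in I}\rho_{i})^{\alpha}=\bigcap_{i\in I}\rho_{i}^{\alpha}$, which you claim for \emph{every} $\alpha\in\um^{\um}$. By Definition~\ref{def:gEq}, $\rho^{\alpha}$ is the \emph{image} of $\rho$ under the reindexing map $T_{\alpha}\colon(a_{1},\dots,a_{m})\mapsto(a_{\alpha 1},\dots,a_{\alpha m})$, not a preimage; your justification (``a tuple lies in $\rho^{\alpha}$ precisely when its coordinate-reindexed preimage lies in $\rho$'') is valid only when $\alpha$ is a permutation, since only then is $T_{\alpha}$ bijective and $\rho^{\alpha}=T_{\alpha^{-1}}^{-1}(\rho)$. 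For non-bijective $\alpha$ the identity fails: take $m=2$, $\alpha(1)=\alpha(2)=1$, $\rho_{1}=\{(0,0)\}$, $\rho_{2}=\{(0,1)\}$; then $(\rho_{1}\cap\rho_{2})^{\alpha}=\emptyset$ while $\rho_{1}^{\alpha}\cap\rho_{2}^{\alpha}=\{(0,0)\}$. For the same reason $\abs(\rho)$ is \emph{not} equal to $\bigcap_{\alpha\in\um^{\um}}\rho^{\alpha}$ in general (e.g.\ $\rho=\{0,1\}^{2}$ on $A=\{0,1,2\}$ has $\abs(\rho)=\rho$, but $\rho^{\alpha}=\{(0,0),(1,1)\}$ for the constant map $\alpha\equiv 1$), so the ``word-for-word'' transfer from the $\tos$ case to the $\abs$ case does not go through as written.

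The damage is confined to the $\abs$ half of \ref{intersection1-a} and is easily repaired: characterisation \ref{gEq-3} exhibits $\abs(\rho)$ as $\{(a_{1},\dots,a_{m})\mid\forall\alpha\in\um^{\um}\colon T_{\alpha}(a_{1},\dots,a_{m})\in\rho\}=\bigcap_{\alpha\in\um^{\um}}T_{\alpha}^{-1}(\rho)$, an intersection of \emph{preimages}, and preimages do commute with arbitrary intersections; equivalently, one just swaps the two universal quantifiers ``$\forall\alpha$'' and ``$\forall i$''. The $\tos$ half of your argument for \ref{intersection1-a} is fine as it stands, because there every $\alpha$ is a permutation, $T_{\alpha}$ is a bijection of $A^{m}$, and images under a bijection do commute with intersections.
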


\begin{proof}
  \ref{intersection1-a} and   \ref{intersection1-c}  directly follow from the definitions.

\ref{intersection1-b}: We have 
$(a,b)\in(\bigcap_{i\in I}\rho_{i})^{[2]} $ $\iff$
$\{a,b\}^{m}\subseteq \bigcap_{i\in I}\rho_{i}$\\ $\iff$ 
$\forall i\in I: \{a,b\}^{m}\subseteq\rho_{i} $ $\iff$ 
$\forall i\in I: (a,b)\in\rho_{i}^{[2]} $ $\iff$ 
$(a,b)\in\bigcap_{i\in I}\rho_{i}^{[2]}$.
\end{proof}

\end{remark}

For generalized quasiorders $\rho$ we have some additional properties:

\begin{proposition}\label{A2a}
  Let $\rho\in\gQuord(A)$. 
  \begin{enumerate}[label=\textup{(\alph*)}]
  \item\label{A2a-a} $\abs(\rho)$ is a generalized
  equivalence relation (in particular
  $\abs(\rho)\in\gQuord(A)$), and it is the largest generalized
  equivalence relation contained in $\rho$. Moreover, $\tos(\rho)=\abs(\rho)$.
\item \label{A2a-b} $\rho^{[2]}$ is an equivalence relation.
  \end{enumerate}

\end{proposition}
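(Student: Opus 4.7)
My plan is to treat part (a) by first establishing that $\abs(\rho)$ is a generalized equivalence relation contained in $\rho$, then verifying the maximality clause, and finally deducing $\tos(\rho)=\abs(\rho)$ from this maximality. For part (b) I would exhibit explicit matrix constructions witnessing each of reflexivity, symmetry and transitivity of $\rho^{[2]}$.

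For (a), reflexivity of $\abs(\rho)$ follows from $(a,\dots,a)\in\rho$, which gives $\{a\}^{m}\subseteq\rho$, and absolute symmetry is immediate from the set-based definition $\{a_{1},\dots,a_{m}\}^{m}\subseteq\rho$, which depends only on the underlying set of entries. The crux is transitivity of $\abs(\rho)$: given $\abs(\rho)\models(a_{ij})$, take any $(b_{1},\dots,b_{m})\in\{a_{11},\dots,a_{mm}\}^{m}$, write $b_{k}=a_{\sigma(k)\sigma(k)}$ for some $\sigma\in\um^{\um}$, and form the auxiliary matrix $c_{ij}:=a_{\sigma(i)\sigma(j)}$. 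Its diagonal is $(b_{1},\dots,b_{m})$, while every entry of row $i$ lies in $\{a_{\sigma(i)1},\dots,a_{\sigma(i)m}\}$; since that row of $(a_{ij})$ is in $\abs(\rho)$, absolute symmetry forces row $i$ of $(c_{ij})$ into $\rho$, and analogously for columns. Transitivity of $\rho$ then yields $(b_{1},\dots,b_{m})\in\rho$, so $\{a_{11},\dots,a_{mm}\}^{m}\subseteq\rho$. I expect this matrix construction to be the main obstacle: a naive Latin-square argument is insufficient (for instance, no $3\times3$ Latin square has diagonal $(1,1,2)$), and the essential new input is the absolute symmetry of rows and columns, which allows entries to repeat.

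Once $\abs(\rho)\in\gEq(A)$ is known, the maximality clause is routine: any $\sigma\in\gEq(A)$ with $\sigma\subseteq\rho$ satisfies $\{a_{1},\dots,a_{m}\}^{m}\subseteq\sigma\subseteq\rho$ for each $(a_{1},\dots,a_{m})\in\sigma$, hence $\sigma\subseteq\abs(\rho)$. The inclusion $\abs(\rho)\subseteq\tos(\rho)$ is obvious from the definitions; for the converse I would verify that $\tos(\rho)$ is itself a generalized equivalence contained in $\rho$ and then apply maximality. Reflexivity and total symmetry of $\tos(\rho)$ are clear, and transitivity is handled by a similar trick: if $\tos(\rho)\models(a_{ij})$, then for every $\pi\in\Sym(\um)$ the matrix $b_{ij}:=a_{\pi(i)\pi(j)}$ has all rows and columns in $\tos(\rho)\subseteq\rho$ (each being a permuted copy of a row or column of $(a_{ij})$), so transitivity of $\rho$ gives $(a_{\pi(1)\pi(1)},\dots,a_{\pi(m)\pi(m)})\in\rho$. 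This exhibits every permutation of the diagonal in $\rho$, placing the diagonal in $\tos(\rho)$.

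For part (b), reflexivity of $\rho^{[2]}$ is immediate from reflexivity of $\rho$, and symmetry reduces to $\{a,b\}=\{b,a\}$. For transitivity, given $(a,b),(b,c)\in\rho^{[2]}$ and any $(d_{1},\dots,d_{m})\in\{a,c\}^{m}$, I would use the matrix $e_{ij}:=d_{i}$ if $i=j$ and $e_{ij}:=b$ otherwise. Each row and each column then has its entry set contained in $\{a,b\}$ or $\{b,c\}$ (according to whether the diagonal entry is $a$ or $c$), hence lies in $\rho$ by hypothesis; transitivity of $\rho$ yields $(d_{1},\dots,d_{m})\in\rho$, so $\{a,c\}^{m}\subseteq\rho$ and $(a,c)\in\rho^{[2]}$.
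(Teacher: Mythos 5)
Your direct matrix constructions are correct where you give them: the argument for transitivity of $\abs(\rho)$ via $c_{ij}:=a_{\sigma(i)\sigma(j)}$, the one for transitivity of $\tos(\rho)$ via $b_{ij}:=a_{\pi(i)\pi(j)}$, and the matrix $e_{ij}$ in part (b) all work. This is a genuinely different route from the paper, which instead observes that $\abs(\rho)$ and $\rho^{[2]}$ lie in $[\rho]_{(\land,=)}$ and invokes Proposition~\ref{C2} to conclude they are generalized quasiorders, reading off the symmetry properties from the definitions; your version is more self-contained and elementary, at the cost of the explicit matrix bookkeeping.

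There is, however, a genuine gap in the maximality clause, and it propagates to your derivation of $\tos(\rho)=\abs(\rho)$. You assert that any $\sigma\in\gEq(A)$ with $\sigma\subseteq\rho$ satisfies $\{a_{1},\dots,a_{m}\}^{m}\subseteq\sigma$ for each $(a_{1},\dots,a_{m})\in\sigma$, and call this routine. But a generalized equivalence relation is by definition only \emph{totally} symmetric (closed under permutations of the entries), not \emph{absolutely} symmetric (closed under arbitrary maps $\alpha\in\um^{\um}$, which is exactly what $\{a_{1},\dots,a_{m}\}^{m}\subseteq\sigma$ says). That every generalized equivalence is in fact absolutely symmetric is precisely the content of Lemma~\ref{A2b} and Theorem~\ref{A2c}\ref{A2c-A}\ref{A2c-ii}, and it requires its own nontrivial matrix constructions (the cyclically shifted matrix giving $(a_{i},\dots,a_{i},a_{j})\in\theta$, and a second matrix to reach arbitrary tuples in $\{a_{i},a_{j}\}^{m}$). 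It does not follow from the definitions --- your own careful separation of $\tos$ and $\abs$ in the transitivity arguments shows you know these notions differ. The same issue hits the final step: to get $\tos(\rho)\subseteq\abs(\rho)$ you apply maximality to $\tos(\rho)$, which again requires $\tos(\rho)$ to be absolutely symmetric, i.e.\ exactly the missing fact. The paper's own proof handles this point by an explicit forward reference to Theorem~\ref{A2c}\ref{A2c-A}\ref{A2c-ii}; your proposal needs either that reference or a proof along the lines of Lemma~\ref{A2b} to close the gap.
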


\begin{proof}\ref{A2a-a}: Since $\um$ is finite, the condition $\forall \alpha\in
\um^{\um}:(a_{\alpha 1},\dots,a_{\alpha m})\in\rho$ is a finite
conjunction of atomic formulas $(a_{\alpha 1},\dots,a_{\alpha
  m})\in\rho$. Thus we have 
$\abs(\rho)\in [\rho]_{(\land,=)}$ by Definition~\ref{gEq-3}. Because
$\rho\in\gQuord(A)$ we get $\abs(\rho)\in\gQuord(A)$ from
Proposition~\ref{C2}. Since $\abs(\rho)$ is totally symmetric we have
$\abs(\rho)\in\gEq(A)$. Analogously we get $\tos(\rho)\in\gEq(A)$.

Further, $\abs(\rho)$ contains every generalized equivalence relation
$\theta$ contained in $\rho$. In fact, $\theta\subseteq\rho$ implies
$\abs(\theta)\subseteq\abs(\rho)$. In
Proposition~\ref{A2c}\ref{A2c-A}\ref{A2c-ii} we shall see that
$\abs(\theta)=\theta$, thus $\theta\subseteq\abs(\rho)$. Moreover,
for $\theta=\tos(\rho)\in\gEq(A)$, this implies $\tos(\rho)\subseteq\abs(\rho)$ what gives equality $\tos(\rho)=\abs(\rho)$
since the other inclusion $\tos(\rho)\supseteq\abs(\rho)$ is trivial
by the definitions \ref{gEq-1} and \ref{gEq-3}.  

\ref{A2a-b}: The defining condition
  $\{a,b\}^{m}\subseteq\rho$ (cf.~\ref{gEq-4}) can be
  expressed formally as a
  conjunction of atomic formulas of the form
  $(\dots,a,\dots,b,\dots)\in\rho$. Thus
  $\rho^{[2]}\in[\rho]_{(\land,=)}$ what implies that
  $\rho^{[2]}$ is a generalized quasiorder by
  Proposition~\ref{C2}. Binary generalized quasiorders are just
  quasiorders (reflexive and transitive). Symmetry directly follows
  from the definition, see also Remark \ref{rem:gEq}, i.e., we have
  $\rho^{[2]}\in\Eq(A)$. 
\end{proof}

\begin{lemma}\label{A2b}
  Let $\theta$ be an $m$-ary generalized equivalence relation ($m\geq
  2$) and $(a_{1},\dots,a_{m})\in\theta$. Then
  $\{a_{i},a_{j}\}^{m}\subseteq \theta$ for any $i,j\in\um$.
\end{lemma}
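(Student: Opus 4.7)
The plan is to apply generalized transitivity of $\theta$ to a suitable $m\times m$ matrix whose rows and columns lie in $\theta$ and whose diagonal is a prescribed tuple $(c_1,\dots,c_m)\in\{a_i,a_j\}^m$; this will place that target tuple in $\theta$. Two stockpiles of rows and columns are available from the hypotheses: by total symmetry of $\theta$, every permutation of $(a_1,\dots,a_m)$ lies in $\theta$, and by reflexivity every constant $m$-tuple lies in $\theta$. Using total symmetry once more, it suffices to exhibit at least one tuple in $\theta$ for every \emph{split type} $(k,m-k)$, i.e.\ a tuple with exactly $k$ occurrences of $a_i$ and $m-k$ occurrences of $a_j$; the extreme cases $k\in\{0,m\}$ are immediate from reflexivity.

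For the intermediate values of $k$ I would proceed in two stages. In the first stage, for each pair $\{a_s,a_t\}\subseteq\{a_1,\dots,a_m\}$, I apply generalized transitivity to the cyclic Latin square $M_{kl}:=a_{((k+l-2)\bmod m)+1}$ and to the squares obtained from it by relabelling symbols: all rows and columns of such a square are permutations of $(a_1,\dots,a_m)$ (so in $\theta$), and the diagonal is a ``balanced'' $\{a_s,a_t\}$-tuple (for example $(a_1,a_3,a_1,a_3)$ when $m=4$). By total symmetry this places all balanced split tuples of every two-element subset of $\{a_1,\dots,a_m\}$ into $\theta$. In the second stage, to realise a more skewed diagonal such as $(a_i,\dots,a_i,a_j)$, I pick a position $l^{*}$ at which the target carries an $a_i$ and assemble a matrix whose rows are either constant tuples $a_i$ (for rows $k\ne l^{*}$ with $c_k=a_i$) or permutations of $(a_1,\dots,a_m)$ chosen (via suitable transpositions) to place $a_i$ at column $l^{*}$ while respecting the diagonal entry (for $k=l^{*}$ and for rows with $c_k=a_j$). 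Column $l^{*}$ of such a matrix is then constant $a_i$, and the remaining columns turn out to be balanced $\{a_i,a_s\}$-split tuples already certified in $\theta$ at the first stage. Iterating this bootstrap over increasingly skewed splits yields representatives of every remaining split type; for $m=3$ a single application already works, as illustrated by
\[
\begin{pmatrix} a_i & a_i & a_i \\ a_j & a_i & a_s \\ a_s & a_i & a_j \end{pmatrix},\qquad\{i,j,s\}=\{1,2,3\},
\]
whose three rows are a constant and two permutations of $(a_1,a_2,a_3)$ and whose three columns are two permutations and a constant.

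The main obstacle will be that for $m\ge 4$ no Latin square over $\{a_1,\dots,a_m\}$ can carry a diagonal of the form $(a_i,\dots,a_i,a_j)$: a quick counting argument shows that $a_i$, once it occupies $m-1$ diagonal positions, cannot also appear in the remaining column. This rules out a one-shot Latin-square proof and is exactly what forces the two-stage, iterative construction described above. Once a representative of every split type has been placed in $\theta$, total symmetry delivers the full inclusion $\{a_i,a_j\}^m\subseteq\theta$.
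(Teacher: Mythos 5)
Your overall skeleton (use total symmetry to reduce to one representative per split type, then realize each representative as the diagonal of a matrix whose rows and columns are already known to lie in $\theta$) is the same as the paper's, but both stages of your construction break down as soon as $m\geq 5$, and your two worked examples ($m=3$ and $m=4$) are exactly the cases that hide this. Stage 1: the diagonal of your cyclic Latin square is $M_{kk}=a_{((2k-2)\bmod m)+1}$, and for \emph{odd} $m$ the map $k\mapsto (2k-2)\bmod m$ is a bijection (since $2$ is invertible mod $m$), so the diagonal is just a permutation of $(a_1,\dots,a_m)$ --- already in $\theta$ by total symmetry --- and not a balanced two-letter tuple; relabelling symbols does not change this. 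Thus for $m=5,7,\dots$ stage 1 certifies nothing and your bootstrap has no seed. Stage 2: if the target diagonal carries $a_j$ at two positions $k_1\neq k_2$, the corresponding rows must be \emph{distinct} permutations of $(a_1,\dots,a_m)$, and (in the worst case, where $a_1,\dots,a_m$ are pairwise distinct) each such permutation has its unique $a_i$ in column $l^{*}$; hence column $k_1$ contains $a_j$ from row $k_1$, a third symbol $\pi_{k_2}(k_1)\notin\{a_i,a_j\}$ from row $k_2$, and repeated $a_i$'s from the constant rows --- such a column is neither a two-letter tuple nor a permutation of the original tuple, so it is not certified and transitivity cannot be applied. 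Your matrix therefore only ever handles targets with a \emph{single} $a_j$ (taking the two permutation rows equal), and even there its columns are $(m-2,2)$-splits, which are ``balanced'', i.e.\ covered by stage 1, only when $m=4$. So for even $m\geq 6$ the iteration is circular (the $(m-1,1)$-split needs $(m-2,2)$-splits, which your row recipe of constants plus permutations cannot produce first), and for odd $m\geq 5$ nothing is produced at all.

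The missing idea is that the \emph{most skewed} split can be realized first, in one shot, by evading your (correct) Latin-square obstruction: instead of a Latin square of order $m$, the paper borders a cyclic Latin square of order $m-1$ with a constant row and column. Writing $(b_1,\dots,b_m)$ for a permutation of $(a_1,\dots,a_m)$ with $b_1=a_i$ and $b_m=a_j$, take the matrix whose first $m-1$ rows are the cyclic shifts of $(b_1,\dots,b_{m-1})$ with $b_m$ appended, and whose last row is constant $b_m$: every row and column is then either a permutation of $(b_1,\dots,b_m)$ or constant, while the diagonal is $(b_1,\dots,b_1,b_m)=(a_i,\dots,a_i,a_j)$. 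Your own $m=3$ matrix is precisely such a bordered square (one constant row, one constant column, the rest permutations), so the right generalization was within reach. Once these $(m-1,1)$-splits are in $\theta$ for all pairs, a single further matrix --- diagonal entries $c_k$, all off-diagonal entries $a_i$ --- finishes the proof, since each of its rows and columns contains at most one $a_j$. This replaces your entire ``increasingly skewed'' iteration, whose direction (from balanced toward skewed) is exactly what makes it circular.
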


\begin{proof} Let
  $(a_{1},\dots,a_{m})\in\theta$. If $a_{i}=a_{j}$ then trivially
  $\{a_{i},a_{j}\}^{m}\subseteq\theta$ by reflexivity of
  $\theta$, so we may assume that there are at least two different
  elements among $a_{1},\dots,a_{m}$.

At first we show
  $(a_{i},\dots,a_{i},a_{j})\in\rho$ for all $i,j\in\um$. 
Let $(b_{1},\dots,b_{m})=(a_{\alpha 1},\dots,a_{\alpha m})$ for some
permutation $\alpha$ on $\{1,\dots,m\}$ such that $b_{1}\neq b_{m}$. Since $\theta$ is totally
symmetric we have $(b_{1},\dots,b_{m})\in\theta$.
Consider the matrix
\begin{align*}
M:= \begin{pmatrix}
   b_{1}&b_{2}&b_{3}&\dots&b_{m-1}&b_{m}\\
   b_{m-1}&b_{1}&b_{2}&\dots&b_{m-2}&b_{m}\\
   b_{m-2}&b_{m-1}&b_{1}&\dots&b_{m-3}&b_{m}\\
   \vdots&\vdots&\vdots&\dots&\vdots&\vdots\\
   b_{2}&b_{3}&b_{4}&\dots&b_{1}&b_{m}\\
   b_{m}&b_{m}&b_{m}&\dots&b_{m}&b_{m}
 \end{pmatrix}
 \text{ with diagonal $(b_{1},\dots,b_{1},b_{m})$.}
\end{align*}

Each of the first $n-1$ rows and columns is a permutation of the tuple
$(b_{1},\dots,b_{m})\in\theta$ and therefore also belongs to $\theta$
(since  $\theta$ is totally symmetric) and the last row and column
belong to $\theta$ by reflexivity. Thus
$\theta\models M$, hence $(b_{1},\dots,b_{1},b_{m})\in\theta$ by
transitivity of $\theta$.

For $a_{i}\neq a_{j}$  we
can apply this with any permutation satisfying $\alpha:1\mapsto i, m\mapsto j$ 
 what gives $(a_{i},\dots,a_{i},a_{j})\in \theta$ (for $a_{i}=a_{j}$
 this is trivially true). Permuting the components we conclude that
 each tuple $(a_{i},\dots,a_{j},\dots,a_{i})\in\{a_{i},a_{j}\}^{m}$
 (where $a_{j}$ appears only once) also belongs to $\theta$.

Now let $(c_{1},\dots,c_{m})$ be an arbitrary tuple in
$\{a_{i},a_{j}\}^{m}$. Consider the matrix $(a_{k\ell})_{k,\ell\in\um}$ given by
%
 $ a_{k\ell}:=
  \begin{cases}
    c_{k} &\text{if }k=\ell,\\
    a_{i}&\text{otherwise,}
  \end{cases}$
with diagonal $(c_{1},\dots,c_{m})$. Note that each row and column
contains at most one $a_{j}$ (namely if the diagonal element $c_{k}$
equals $a_{j}$), all other
entries are $a_{i}$, and thus all rows and columns belongs to $\theta$ (as shown above). Therefore $\theta\models(a_{k\ell})$ and by
transitivity of $\theta$ we have
$(c_{1},\dots,c_{m})\in\theta$. Consequently $\{a_{i},a_{j}\}^{m}\subseteq\theta$.
\end{proof}

\begin{theorem}\label{A2c} 

  \begin{enumerate}[label=\textup{(\Alph*)}]
\item\label{A2c-A}Let $\theta$ be an $m$-ary ($m\geq 2$) generalized
  equivalence relation. Then
  \begin{enumerate}[label=\textup{(\roman*)}]
\item \label{A2c-i} The binary symmetric part $\theta^{[2]}$ (cf.~{\rm\ref{gEq-4}})
completely determines $\theta$, namely
we have:
  \begin{align*}\label{A2c-1}\tag*{\ref{A2c}(1)}
   (a_{1},\dots,a_{m})\in\theta&\iff 
        \forall i,j\in\um: (a_{i},a_{j})\in\theta^{[2]}.
  \end{align*}
\item\label{A2c-ii} $\abs(\theta)=\theta$, in particular, $\theta$ is absolutely
symmetric.
  \end{enumerate}
\item\label{A2c-B} Let $\psi\in\Eq(A)$. Then $\psi^{\updownarrow m}$
  defined by
  \begin{align*}\label{A2c-2}\tag*{\ref{A2c}(2)}
    \psi^{\updownarrow m}:=\{(a_{1},\dots,a_{m})\in A^{m}\mid \forall i,j\in\um: (a_{i},a_{j})\in\psi\}
  \end{align*}
is a generalized equivalence relation. 
\item\label{A2c-C}
We have 
$(\psi^{\updownarrow m})^{[2]}=\psi$ and $(\theta^{[2]})^{\updownarrow m}=\theta$
for $\psi\in\Eq(A)$ and $\theta\in\gEqa[m](A)$. The mappings
$\psi\mapsto \psi^{\updownarrow m}$ and $\theta\mapsto\theta^{[2]}$ are
isomorphisms between the lattices $\Eq(A)$ and $\gEqa[m](A)$.

\end{enumerate}
\end{theorem}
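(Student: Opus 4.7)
My plan is to treat the four claims in the natural order, with Lemma~\ref{A2b} supplying the main engine for part~\ref{A2c-A} and the remaining parts following largely from the definitions.

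For part~\ref{A2c-A}\ref{A2c-i}, the forward implication is immediate from Lemma~\ref{A2b}: if $(a_1,\dots,a_m)\in\theta$, then for any $i,j\in\um$ we have $\{a_i,a_j\}^m\subseteq\theta$, which is precisely $(a_i,a_j)\in\theta^{[2]}$. For the reverse implication, where transitivity does the real work, I assume $(a_i,a_j)\in\theta^{[2]}$ for all $i,j$; in particular $\{a_1,a_k\}^m\subseteq\theta$ for every $k$. I then form the $m\times m$ ``star'' matrix $M=(a_{k\ell})$ with $a_{k\ell}:=a_k$ on the diagonal and $a_{k\ell}:=a_1$ off it. Row $k$ uses only the two elements $a_1,a_k$, and column $\ell$ only $a_1,a_\ell$, so every row and every column lies in $\theta$. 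Since the diagonal of $M$ is $(a_1,\dots,a_m)$, transitivity of $\theta$ yields $(a_1,\dots,a_m)\in\theta$. For part~\ref{A2c-A}\ref{A2c-ii}, the inclusion $\abs(\theta)\subseteq\theta$ is trivial; for the other direction, given $(a_1,\dots,a_m)\in\theta$ and an arbitrary $(c_1,\dots,c_m)\in\{a_1,\dots,a_m\}^m$, each $c_i$ equals some $a_{k_i}$, so $(c_i,c_j)\in\theta^{[2]}$ by Lemma~\ref{A2b}, and then \ref{A2c-i} gives $(c_1,\dots,c_m)\in\theta$.

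For part~\ref{A2c-B}, reflexivity of $\psi^{\updownarrow m}$ is immediate from reflexivity of $\psi$, and total symmetry is visible directly since a permutation of the tuple only permutes the indices of the universally quantified pairs. For transitivity, suppose $\psi^{\updownarrow m}\models(a_{ij})$: for any $i,j$, using that row $i$ belongs to $\psi^{\updownarrow m}$ yields $(a_{ii},a_{ij})\in\psi$, and using that column $j$ belongs to $\psi^{\updownarrow m}$ yields $(a_{ij},a_{jj})\in\psi$, so transitivity of $\psi$ gives $(a_{ii},a_{jj})\in\psi$; this shows $(a_{11},\dots,a_{mm})\in\psi^{\updownarrow m}$.

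For part~\ref{A2c-C}, the identity $(\theta^{[2]})^{\updownarrow m}=\theta$ is simply a rereading of \ref{A2c-i}, while $(\psi^{\updownarrow m})^{[2]}=\psi$ unfolds to: $(a,b)\in\psi$ iff every tuple in $\{a,b\}^m$ lies in $\psi^{\updownarrow m}$; the forward direction follows from reflexivity and symmetry of $\psi$, and the backward direction by picking the tuple $(a,b,a,\dots,a)$. For the lattice isomorphism statement, both maps are monotone with respect to inclusion (directly from the definition of $\psi^{\updownarrow m}$, and by Lemma~\ref{lem:intersection1}\ref{intersection1-c} for $\theta\mapsto\theta^{[2]}$) and are mutually inverse bijections by the two identities just verified; a monotone bijection between complete lattices whose inverse is also monotone is automatically a lattice isomorphism. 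The main obstacle, as I see it, is the reverse implication of \ref{A2c-i}: one has to recognize that the two-element data contained in $\theta^{[2]}$ already suffices to reconstruct all of $\theta$, and the concrete insight is the star matrix above, whose rows and columns each involve only two of the $a_k$'s, so that transitivity can be invoked in a single stroke. Once this is in hand, the remaining parts are essentially bookkeeping.
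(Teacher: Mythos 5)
Your proposal is correct, and its core coincides with the paper's proof: part \ref{A2c-A}\ref{A2c-i} is handled exactly as in the paper, with the forward direction from Lemma~\ref{A2b} and the reverse direction via the same ``star'' matrix ($a_{k\ell}=a_k$ on the diagonal, $a_1$ elsewhere) whose rows and columns each involve only two elements. The only places where you deviate are in the bookkeeping: for \ref{A2c-A}\ref{A2c-ii} you deduce $\theta\subseteq\abs(\theta)$ directly from \ref{A2c-i} applied to an arbitrary tuple $(c_1,\dots,c_m)\in\{a_1,\dots,a_m\}^m$, whereas the paper builds another matrix and invokes transitivity of $\abs(\theta)$ (which rests on Proposition~\ref{A2a}\ref{A2a-a}); and for \ref{A2c-B} you verify transitivity of $\psi^{\updownarrow m}$ by hand (row gives $(a_{ii},a_{ij})\in\psi$, column gives $(a_{ij},a_{jj})\in\psi$, then transitivity of $\psi$), whereas the paper gets it for free from $\psi^{\updownarrow m}\in[\psi]_{(\land,=)}$ and Proposition~\ref{C2}. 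Your route is slightly more self-contained and elementary; the paper's leans on the already-established closure machinery and is shorter. Both are sound, and your treatment of part \ref{A2c-C} (mutually inverse monotone bijections) matches the paper's.
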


\begin{proof}\ref{A2c-A}\ref{A2c-i}: 
To show \ref{A2c-1}, note that
$(a_{1},\dots,a_{m})\in\theta$ implies $
\{a_{i},a_{j}\}^{m}\subseteq\theta$ by Lemma~\ref{A2b}.
 To see the inverse implication,
let $\{a_{i},a_{j}\}^{m}\subseteq\theta$ for all $i,j\in\um$. Then we have
$\theta\models(a_{ij})_{i,j\in\um}$ for the matrix with
$a_{ii}:=a_{i}$ for $i\in\um$,
and $a_{ij}:=a_{1}$ otherwise (each row and column contains at most
two different entries and therefore belongs to $\theta$). By transitivity of $\theta$ we get for
the diagonal $(a_{1},\dots,a_{m})\in\theta$, what was to be shown.

\ref{A2c-ii}: We have $\abs(\theta)\subseteq \theta$ by definition. We
have to show $\theta\subseteq\abs(\theta)$. Take
$(a_{1},\dots,a_{m})\in\theta$. Then, because of \ref{A2b}, we have
$\{a_{1},a_{i}\}^{m}\subseteq \theta$ for each $i\in\{1,\dots,m\}$,
i.e., each $m$-tuple with only two components $a_{1}$ and $a_{i}$
belongs to $\theta$ and therefore also to $\abs(\theta)$. Thus each row and each column of the following
matrix $(a_{ij})_{i,j\in\um}$ belongs to $\abs(\theta)$: 
$a_{ij}:=
\begin{cases}
  a_{i}&\text{if } i=j,\\
  a_{1}&\text{otherwise,}
\end{cases}
$ with diagonal $(a_{1},\dots,a_{m})$.
Since $\abs(\theta)$ is transitive by Proposition~\ref{A2a}\ref{A2a-a}, the
diagonal $(a_{1},\dots,a_{m})$ belongs to $\abs(\theta)$. Thus
$\theta\subseteq\abs(\theta)$.

\ref{A2c-B}: By definition we have $\psi^{\updownarrow m}\in
[\psi]_{(\land,=)}$ (see \ref{A2c-2}). Since
$\psi\in\Eq(A)\subseteq\gQuord(A)$ we get $\psi^{\updownarrow
  m}\in\gQuord(A)$ by Proposition~\ref{C2}. The total symmetry of
$\psi^{\updownarrow m}$ also follows directly from the definition.

\ref{A2c-C}: $(\psi^{\updownarrow m})^{[2]}=\psi$ and
$(\theta^{[2]})^{\updownarrow m}=\theta$ directly follow from the
definitions. Thus the mappings $\psi\mapsto \psi^{\updownarrow m}$ and
$\theta\mapsto\theta^{[2]}$ are mutually inverse to each other and
obviously preserve inclusions. Thus they are lattice isomorphisms.
\end{proof}

\begin{corollary}\label{A2c-D}
Let $(A,F)$ be an algebra. Then the mappings
$\psi\mapsto \psi^{\updownarrow m}$ and $\theta\mapsto\theta^{[2]}$ are
isomorphisms between the lattices $\Con(A,F)$ and $\gCona[m](A,F)$
(where $\gCona[m](A,F):=\gEq(A)\cap \Inva[m] F$).
\end{corollary}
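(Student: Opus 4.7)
The plan is to bootstrap from Theorem~\ref{A2c}\ref{A2c-C}, which already establishes that $\psi\mapsto\psi^{\updownarrow m}$ and $\theta\mapsto\theta^{[2]}$ are mutually inverse lattice isomorphisms between the full lattices $\Eq(A)$ and $\gEqa[m](A)$. What remains is to verify that these bijections restrict to mutually inverse bijections between the subsets of $F$-invariant relations, i.e.\ to check that $\psi\in\Inv F\iff\psi^{\updownarrow m}\in\Inv F$ (equivalently, $\theta\in\Inv F\iff\theta^{[2]}\in\Inv F$).

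The key observation is that both constructions are given by quantifier-free primitive positive formulas in their input. The defining equation \ref{A2c-2} directly exhibits $\psi^{\updownarrow m}$ as the intersection $\bigcap_{i,j\in\um}\{(a_{1},\dots,a_{m})\in A^{m}\mid (a_{i},a_{j})\in\psi\}$, hence $\psi^{\updownarrow m}\in[\psi]_{(\land,=)}$; this observation was already used in the proof of Theorem~\ref{A2c}\ref{A2c-B}. Dually, the condition $\{a,b\}^{m}\subseteq\theta$ appearing in \ref{gEq-4} unfolds as the finite conjunction of the (at most) $2^{m}$ atomic formulas $(c_{1},\dots,c_{m})\in\theta$ ranging over $(c_{1},\dots,c_{m})\in\{a,b\}^{m}$, so $\theta^{[2]}\in[\theta]_{(\land,=)}$; this was already used in the proof of Proposition~\ref{A2a}\ref{A2a-b}.

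Since $\Inv F$ is a relational clone and, in particular, closed under quantifier-free primitive positive definitions, both pp-representations immediately yield $\psi\in\Inv F\Rightarrow\psi^{\updownarrow m}\in\Inv F$ and $\theta\in\Inv F\Rightarrow\theta^{[2]}\in\Inv F$. Combined with Theorem~\ref{A2c}\ref{A2c-C} this gives mutually inverse order-preserving bijections between $\Con(A,F)=\Inv F\cap\Eq(A)$ and $\gCona[m](A,F)=\Inv F\cap\gEqa[m](A)$. Both of these are complete lattices (each being the intersection with $\Inv F$, which is closed under arbitrary intersection), and therefore an order-preserving bijection between them is automatically a lattice isomorphism, which is exactly the assertion of the corollary.

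I do not anticipate any real obstacle here: the substantial content resides in Theorem~\ref{A2c}, and the corollary reduces to the routine transfer of that result through the pp-closure of $\Inv F$, an argument entirely parallel to the ``moreover'' clause of Proposition~\ref{C2}.
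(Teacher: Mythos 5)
Your proposal is correct and follows essentially the same route as the paper: it invokes Theorem~\ref{A2c}\ref{A2c-C} for the lattice isomorphism between $\Eq(A)$ and $\gEqa[m](A)$, and then uses the quantifier-free pp-definability of $\psi^{\updownarrow m}$ from $\psi$ and of $\theta^{[2]}$ from $\theta$ to see that both maps send invariant relations to invariant relations, exactly as in the paper's proof.
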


\begin{proof}
By definition, $\theta^{[2]}\in [\theta]_{(\land,=)}$
(see~\ref{gEq-4}) and 
$\psi^{\updownarrow m}\in[\psi]_{(\land,=)}$ as already mentioned
in the proof of \ref{A2c}\ref{A2c-B}. Thus $f\preserves\theta\implies f\preserves\theta^{[2]}$ and 
 $f\preserves\psi\implies f\preserves\psi^{{\updownarrow
     m}}$. Consequenly, in addition to \ref{A2c}\ref{A2c-C}, invariant
 relations of an algebra are mapped to 
 invariant relations under the mappings in question.
\end{proof}

\begin{fact}\label{fact:abs}
  For an $m$-ary reflexive relation $\rho$ we have 
  $\abs(\rho)=(\rho^{[2]})^{\updownarrow m}$. In fact, from the
  definition follows
  \begin{align*}
    (a_{1},\dots,a_{m})\in\abs(\rho)&\iff \forall i,j:
  \{a_{i},a_{j}\}^{m}\subseteq\rho\\
  &\iff \forall i,j:(a_{i}.a_{j})\in\rho^{[2]}\\
  &\iff (a_{1},\dots,a_{m})\in(\rho^{[2]})^{\updownarrow m}.
  \end{align*}
  
\end{fact}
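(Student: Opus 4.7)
The plan is to establish the identity $\abs(\rho) = (\rho^{[2]})^{\updownarrow m}$ by chaining three biconditionals, one for each of the definitions involved. Fix a tuple $(a_1, \dots, a_m) \in A^m$. First, I would unfold $(a_1, \dots, a_m) \in \abs(\rho)$ via \ref{gEq-2} as the set-inclusion $\{a_1, \dots, a_m\}^m \subseteq \rho$. Second, I would rewrite this as the pairwise condition $\forall i, j \in \um: \{a_i, a_j\}^m \subseteq \rho$. Third, each conjunct unfolds via \ref{gEq-4} to $(a_i, a_j) \in \rho^{[2]}$, and the overall condition collapses via \ref{A2c-2} to $(a_1, \dots, a_m) \in (\rho^{[2]})^{\updownarrow m}$. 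Steps one and three are pure definition chasing; the substance sits in step two.

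Step two is where I expect the main obstacle. The $\Longrightarrow$ direction is immediate from the inclusion $\{a_i, a_j\}^m \subseteq \{a_1, \dots, a_m\}^m$. The $\Longleftarrow$ direction asks me to show that if every two-element ``slice'' $\{a_i, a_j\}^m$ lies in $\rho$, then the full cube $\{a_1, \dots, a_m\}^m$ does too. For a tuple $(b_1, \dots, b_m) \in \{a_1, \dots, a_m\}^m$ whose entries involve three or more distinct $a_i$'s, the pairwise hypothesis is not directly usable. Here I would mimic the matrix construction in Lemma~\ref{A2b} and in the proof of Theorem~\ref{A2c}\ref{A2c-A}\ref{A2c-i}: build an $m \times m$ matrix with diagonal $(b_1, \dots, b_m)$ whose rows and columns each involve at most two of the $a_i$'s (hence lie in $\rho$ by the pairwise hypothesis), then invoke generalized transitivity of $\rho$ to conclude $(b_1, \dots, b_m) \in \rho$. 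This step quietly asks more of $\rho$ than bare reflexivity, namely the transitivity property available when $\rho$ is a generalized quasiorder.

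An arguably cleaner alternative sidesteps the matrix bookkeeping by reducing to Theorem~\ref{A2c}\ref{A2c-C}. Granted enough structure for $\abs(\rho)$ to lie in $\gEq(A)$ — which holds for instance when $\rho$ is a generalized quasiorder, by Proposition~\ref{A2a}\ref{A2a-a} — a two-line check shows $\abs(\rho)^{[2]} = \rho^{[2]}$: the inclusion $\abs(\rho)^{[2]} \subseteq \rho^{[2]}$ follows from $\abs(\rho) \subseteq \rho$, and the reverse inclusion holds because $\{a, b\}^m \subseteq \rho$ forces every tuple $(c_1, \dots, c_m) \in \{a, b\}^m$ to satisfy $\{c_1, \dots, c_m\}^m \subseteq \{a, b\}^m \subseteq \rho$, hence to lie in $\abs(\rho)$. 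Applying Theorem~\ref{A2c}\ref{A2c-C} to $\abs(\rho)$ then yields $\abs(\rho) = (\abs(\rho)^{[2]})^{\updownarrow m} = (\rho^{[2]})^{\updownarrow m}$, completing the proof.
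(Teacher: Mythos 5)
Your suspicion about step two is exactly right, and in fact you have put your finger on a flaw in the statement itself rather than in your own argument. For a relation that is merely reflexive the backward direction of the first equivalence fails: take $A=\{1,2,3\}$, $m=3$ and $\rho:=\{(a,b,c)\in A^{3}\mid |\{a,b,c\}|\le 2\}$. This $\rho$ is reflexive and $\{a,b\}^{3}\subseteq\rho$ for all $a,b$, so $\rho^{[2]}=A^{2}$ and $(\rho^{[2]})^{\updownarrow 3}=A^{3}$; yet $(1,2,3)\notin\abs(\rho)$ because $(1,2,3)\in\{1,2,3\}^{3}$ but $(1,2,3)\notin\rho$. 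Thus only the inclusion $\abs(\rho)\subseteq(\rho^{[2]})^{\updownarrow m}$ is pure definition chasing; the reverse inclusion genuinely needs generalized transitivity once $m\ge 3$. The paper's own ``proof'' is nothing more than the displayed chain asserted to follow ``from the definition'', so it glosses over precisely the point you isolate. The Fact is only invoked later, in the proof of Proposition~\ref{A4}\ref{A4-a}, where $\rho\in\gQuorda[m](A)$, so your strengthened hypothesis is exactly what is needed there.

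Both of your repairs are sound under the hypothesis $\rho\in\gQuord(A)$. The matrix with diagonal $(b_{1},\dots,b_{m})$ and all off-diagonal entries equal to one fixed value $c\in\{a_{1},\dots,a_{m}\}$ has every row and column inside some $\{c,b_{k}\}^{m}\subseteq\rho$, so a single application of transitivity gives $(b_{1},\dots,b_{m})\in\rho$; this is the same device as in Lemma~\ref{A2b} and Theorem~\ref{A2c}\ref{A2c-A}\ref{A2c-i}. Your alternative via $(\abs(\rho))^{[2]}=\rho^{[2]}$ (already recorded in Remark~\ref{rem:gEq}) together with Proposition~\ref{A2a}\ref{A2a-a} and Theorem~\ref{A2c}\ref{A2c-C} is also correct and avoids the matrix bookkeeping. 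Either way, the Fact should have its hypothesis strengthened from ``reflexive'' to ``generalized quasiorder'' (or its conclusion weakened to the inclusion $\abs(\rho)\subseteq(\rho^{[2]})^{\updownarrow m}$); as used in the paper this causes no harm, but as stated it is not a consequence of the definitions alone.
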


With the generalized notions of reflexivity and symmetry one also can
generalize tolerances. A relation $\rho\subseteq A^{m}$ is called a
\New{generalized tolerance} if it is reflexive and totally symmetric
(cf.\ \ref{def:gEq}). We have:

\begin{proposition}\label{prop:tol}
Let $\rho$ be a generalized tolerance. Then
  the transitive closure of $\rho$ is a generalized equivalence
  relation: $\rho^{\tra}\in\gEq(A)$.
\end{proposition}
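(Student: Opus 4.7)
The plan is to show the three defining properties of a generalized equivalence relation for $\rho^{\tra}$: reflexivity, total symmetry, and transitivity. Two of these come almost for free: transitivity holds by the very definition of $\rho^{\tra}$ as the least transitive relation containing $\rho$, and reflexivity of $\rho$ is inherited by $\rho^{\tra}$ (as already noted in the preliminaries, right after the definition of $\rho^{\tra}$). So the only real work is to establish total symmetry.

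My strategy for total symmetry is an induction on $n$ showing that every iterate $\rho^{(n)}$ in the construction $\rho^{(n+1)} = \rho^{(n)} \cup \partial(\rho^{(n)})$ is totally symmetric; the conclusion then follows because a union of totally symmetric relations is totally symmetric. The base case $n=0$ is the hypothesis on $\rho$. The inductive step reduces to the key claim: \emph{if $\sigma \in \Rela[m](A)$ is totally symmetric, then so is $\partial(\sigma)$.}

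To prove this claim, I would pick $(a_{11},\dots,a_{mm}) \in \partial(\sigma)$ with witnessing matrix $(a_{ij})_{i,j\in\um}$ satisfying $\sigma\models(a_{ij})$, and fix an arbitrary permutation $\pi\in\Sym(\um)$. I claim the permuted matrix $B := (a_{\pi i,\pi j})_{i,j\in\um}$ still witnesses membership in $\partial(\sigma)$ for the permuted diagonal $(a_{\pi 1,\pi 1},\dots,a_{\pi m,\pi m})$. Its $i$-th row is $(a_{\pi i,\pi 1},\dots,a_{\pi i,\pi m})$, which is the $\pi$-permutation of the $(\pi i)$-th row of $(a_{ij})$; since that row lies in $\sigma$ and $\sigma$ is totally symmetric, the $i$-th row of $B$ also lies in $\sigma$. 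The same argument applies to columns. Hence $\sigma\models B$, and its diagonal $(a_{\pi 1,\pi 1},\dots,a_{\pi m,\pi m})$ lies in $\partial(\sigma)$, showing $\partial(\sigma)^{\pi}\subseteq \partial(\sigma)$ for every $\pi\in\Sym(\um)$.

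There is no real obstacle here: the argument is just the observation that simultaneously permuting rows and columns of a witness matrix by the same permutation $\pi$ preserves the row/column membership condition when $\sigma$ is totally symmetric, and permutes the diagonal by $\pi$. The only thing to be careful about is not to mix up the direction of the index permutation in $B$; writing out one row explicitly as above avoids any confusion.
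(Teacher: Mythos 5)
Your proposal is correct and follows essentially the same route as the paper: reduce everything to the single claim that $\partial$ preserves total symmetry, proved by permuting rows and columns of a witness matrix simultaneously by $\pi$. Your explicit induction on the iterates $\rho^{(n)}$ and the row-index bookkeeping are just slightly more detailed versions of what the paper writes.
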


\begin{proof} The transitive closure of a reflexive relation obiously is
  reflexive. It remains to show that $\rho^{\tra}$ is
  totally symmetric. Because 
  $\rho^{\tra}=\bigcup_{n\in\N}\partial^{n}(\rho)$ (cf.\
  Section~\ref{sec:prelim}(B)), it is enough to show that $\partial(\rho)$ is
  totally symmetric whenever $\rho$ is totally symmetric.
In fact, let  $\rho\subseteq A^{m}$ and $(a_{11},\dots,a_{mm})\in\partial(\rho)$, i.e., there is
an $(m\times m)$-matrix $(a_{ij})$ with $\rho\models(a_{ij})$ (by definition of
$\partial(\rho)$, see Section~\ref{sec:prelim}(B)). 
We have to show 
$(a_{\pi1,\pi1},\dots,a_{\pi m,\pi m})\in\partial(\rho)$ 
for each permutation $\pi\in\Sym(\um)$. Since $\rho$ is totally
symmetric, one can permute rows and columns of $(a_{ij})$ arbitrarily
and rows and columns of the resulting matrix still belong to
$\rho$. In particular we have $\rho\models(a_{\pi i,\pi j})$,
consequently $(a_{\pi1,\pi1},\dots,a_{\pi m,\pi m})\in\partial(\rho)$
and we are done.
\end{proof}

\section{Factor relations and generalized partial orders}
\label{sec:gPord}

As usual we denote by $A/\psi=\{[a]_{\psi}\mid a\in A\}$ the set of
all equivalence classes 
(blocks) of an equivalence relation $\psi$ on $A$.

 \begin{definition}[factor relations]\label{A3}
For a relation $\rho\in\Rela[m](A)$ and $\psi\in\Eq(A)$, the
\New{factor relation} $\rho/\psi$ and the \New{block factor relation}
$\rho/[\psi]$ are $m$-ary relations on the set
$A/\psi$, defined as
follows:
\begin{align*}\label{A3-1}
  \rho/\psi&:=\{([a_{1}]_{\psi},\dots,[a_{m}]_{\psi})\in(A/\psi)^{m}\mid 
                 (a_{1},\dots,a_{m})\in \rho\}\tag*{\ref{A3}(1)}\\
  \label{A3-2}
    \rho/[\psi]&:=\{(B_{1},\dots,B_{m})\in(A/\psi)^{m}\mid 
        B_{1}\times\ldots\times B_{m}\subseteq\rho\}.\tag*{\ref{A3}(2)}
\end{align*}
\end{definition}

Note that $\rho/[\psi]$ might be empty in general. Obviously we have
$\rho/[\psi]\subseteq\rho/\psi$.

 \begin{definition}[exchangeability]\label{def:exchangeability} Let $a,b\in
   A$ and $\rho\in\Rela[m](A)$. We say that $a$ and $b$ are
   \New{exchangeable with respect to $\rho$}
if for any $i\in\um$ and any elements $a_{1},\dots,a_{i-1},a_{i+1},\dots,a_{m}\in
A$ we have
\begin{align*}\label{exchange-1}
  (a_{1},\dots,a_{i-1},a,a_{i+1},\dots,a_{m})\in\rho\iff
  (a_{1},\dots,a_{i-1},b,a_{i+1},\dots,a_{m})\in\rho.\tag*{\ref{def:exchangeability}(1)}
\end{align*}
The binary relation
\begin{align*}\label{exchange-2}
  &\rho^{\langle 2\rangle}:=\{(a,b)\in A^{2}\mid a \text{ and }b\text{
    are exchangable w.r.t. }\rho\}\tag*{\ref{def:exchangeability}(2)}
\end{align*}
is called the \New{exchange equivalence} of $\rho$.

  Concerning the name, it will be justified
in \ref{A3a}\ref{A3a-a} that $\rho^{\langle 2\rangle}$ is really an
equivalence relation and
that this relation has the exchange property \ref{exch1} with respect
to~$\rho$.

\begin{remarknote}
For a binary $\rho$, two elements $a,b$ are exchangeable with
respect to $\rho$ if and only if $\rho(a)=\rho(b)$ and
$\rho^{-1}(a)=\rho^{-1}(b)$ where $\rho(x):=\{y\in A\mid (x,y)\in\rho\}$.
\end{remarknote}

 \end{definition}

 \begin{lemma}\label{A3a}   
   For relations $\rho,\rho_{i}\in\Rela[m](A)$ ($i\in I$) we have
   \begin{enumerate}[label=\textup{(\alph*)}]

   \item\label{A3a-a}
    $\rho^{\langle 2\rangle}$ is an equivalence relation. It is the largest equivalence relation satisfying
    the exchange property \ref{exch1} with respect to $\rho$.
    Moreover we have
    \begin{align*}\label{A3a-a1}\tag*{\ref{A3a}(a1)}
      (a_{1},\dots,a_{m})\in\rho&\iff 
[a_{1}]_{\rho^{\langle 2\rangle}}\times\ldots
  \times[a_{m}]_{\rho^{\langle 2\rangle}}\subseteq\rho\\
   \label{A3a-a2}\tag*{\ref{A3a}(a2)}
    &\iff ([a_{1}]_{\rho^{\langle 2\rangle}},\ldots
       [a_{m}]_{\rho^{\langle 2\rangle}})\in\rho/[\rho^{\langle 2\rangle}],
    \end{align*}
 in particular, $\rho/\rho^{\langle 2\rangle}=\rho/[\rho^{\langle
   2\rangle}]$.

   \item \label{A3a-b}$\rho^{\langle 2\rangle}\subseteq \rho^{[2]}$.

\item\label{A3a-c} 
   $\bigcap_{i\in I}\rho_{i}^{\langle2\rangle}\subseteq(\bigcap_{i\in
      I}\rho_{i})^{\langle2\rangle}$.
   \end{enumerate}
 \end{lemma}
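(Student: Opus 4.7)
For part (a), I would verify directly from \ref{exchange-1} that $\rho^{\langle 2\rangle}$ is an equivalence: reflexivity is the case $a=b$, symmetry is the invariance of \ref{exchange-1} under swapping $a\leftrightarrow b$, and transitivity chains two biconditionals through an intermediate element. Next, to show that $\rho^{\langle 2\rangle}$ itself enjoys the exchange property \ref{exch1} with respect to $\rho$, I would take $(a_1,\dots,a_m)\in\rho$ together with $(a_j,b_j)\in\rho^{\langle 2\rangle}$ for each $j\in\um$ and flip one coordinate at a time, passing from $(a_1,a_2,\dots,a_m)$ through $(b_1,a_2,\dots,a_m)$, $(b_1,b_2,a_3,\dots,a_m)$, up to $(b_1,\dots,b_m)$ by repeated application of \ref{exchange-1}; each intermediate tuple stays in $\rho$.

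For the maximality claim in (a), let $\psi$ be any equivalence relation satisfying \ref{exch1} with respect to $\rho$, and fix $(a,b)\in\psi$. Given any $(a_1,\dots,a_{i-1},a,a_{i+1},\dots,a_m)\in\rho$, applying \ref{exch1} with the pairs $(a_j,a_j)\in\psi$ (by reflexivity of $\psi$) at coordinates $j\neq i$ and $(a,b)\in\psi$ at coordinate $i$ places $(a_1,\dots,a_{i-1},b,a_{i+1},\dots,a_m)$ in $\rho$; the converse direction uses symmetry of $\psi$ in the same way. Hence $(a,b)\in\rho^{\langle 2\rangle}$. The characterizations \ref{A3a-a1} and \ref{A3a-a2} then fall out: the forward direction of \ref{A3a-a1} is just the exchange property already established, the reverse direction holds because $a_i\in[a_i]_{\rho^{\langle 2\rangle}}$ by reflexivity, \ref{A3a-a2} is only a restatement of \ref{A3a-a1} via \ref{A3-2}, and the identity $\rho/\rho^{\langle 2\rangle}=\rho/[\rho^{\langle 2\rangle}]$ is then immediate.

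For (b), since $\rho^{[2]}$ may be empty when $\rho$ is not reflexive (cf.~Remark \ref{rem:gEq}), I would use the reflexivity of $\rho$ (implicit in the context of this section): starting from $(a,\dots,a)\in\rho$ and repeatedly applying \ref{exchange-1} to switch selected entries from $a$ to $b$, every tuple in $\{a,b\}^m$ ends up in $\rho$, so $(a,b)\in\rho^{[2]}$. For (c), a direct unpacking suffices: if $(a,b)\in\rho_i^{\langle 2\rangle}$ for every $i\in I$, then the biconditional \ref{exchange-1} holds for each $\rho_i$ separately; taking the conjunction over $i\in I$ yields the same biconditional for $\bigcap_{i\in I}\rho_i$, which gives $(a,b)\in(\bigcap_{i\in I}\rho_i)^{\langle 2\rangle}$. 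The one spot that genuinely requires care is the maximality argument in (a), where one must invoke both the reflexivity \emph{and} the symmetry of $\psi$ in order to derive exchangeability in both directions of \ref{exchange-1}; everything else is routine bookkeeping with the definitions.
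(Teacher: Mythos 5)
Your proof is correct and follows essentially the same route as the paper's: direct verification that $\rho^{\langle 2\rangle}$ is an equivalence, the successive coordinate-flipping argument for the exchange property, the reduction of maximality to the pairs $(a_j,a_j)\in\psi$, and routine unpacking for \ref{A3a-c}. Your observation that part \ref{A3a-b} tacitly needs reflexivity of $\rho$ (to get $(a,\dots,a)\in\rho$ before flipping entries to $b$) is a point the paper's own proof also relies on without saying so, and your explicit appeal to the symmetry of $\psi$ in the maximality argument is likewise a careful reading of a step the paper leaves implicit.
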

 
   \begin{proof}

\ref{A3a-a}: 
Reflexivity and symmetry directly follows from
Definition~\ref{exchange-2} and \ref{exchange-1}. Concerning transitivity, let
$(a,b),(b,c)\in\rho^{\langle 2\rangle}$. Then
\begin{align*}
  (a_{1},\dots,a_{i-1},a,a_{i+1},\dots,a_{m})\in\rho &\iff
  (a_{1},\dots,a_{i-1},b,a_{i+1},\dots,a_{m})\in\rho\\ &\iff
  (a_{1},\dots,a_{i-1},c,a_{i+1},\dots,a_{m})\in\rho,
\end{align*}
showing $(a,c)\in\rho^{\langle 2\rangle}$ according to
Definition~\ref{exchange-2}.

To show the exchange property
let $(a_{1},\dots,a_{m})\in\rho$ and
$(a_{i},b_{i})\in\rho^{\langle 2\rangle}$ for
$i\in\{1,\dots,m\}$. Then we have succesively
\begin{align*}
  (a_{1},\dots,a_{m})\in\rho&\iff(b_{1},a_{2},a_{3}\dots,a_{m})\in\rho\text{
                              (since $(a_{1},b_{1})\in\rho^{\langle
                              2\rangle}$)}\\
    &\iff (b_{1},b_{2},a_{3},\dots,a_{m})\in\rho\text{
                              (since $(a_{2},b_{2})\in\rho^{\langle
                              2\rangle}$)}\\
                  &\iff\dots\\
    &\iff(b_{1},b_{2},b_{3},\dots,b_{m})\in\rho\text{
                              (since $(a_{m},b_{m})\in\rho^{\langle
                              2\rangle}$)},
\end{align*}
what shows the exchange property. Now $\rho/\rho^{\langle
  2\rangle}=\rho/[\rho^{\langle 2\rangle}]$ directly follows from the
definitions \ref{A3-1} and \ref{A3-2}.

Concerning the quivalences \ref{A3a-a1} and \ref{A3a-a2}, note that \ref{A3a-a1} expresses the exchange property \ref{exch1}
(the implication ``$\Longleftarrow$'' is trivial since
$a_{i}\in[a_{i}]_{\rho^{\langle2\rangle}}$), and \ref{A3a-a2} follows from
Definition~\ref{A3-2}. 

It remains to show that $\rho^{\langle2\rangle}$ is the largest
equivalence with exchange property. In fact: let $\psi\in\Eq(A)$ has
the exchange property w.r.t.\ $\rho$ and take $(a,b)\in\psi$. Then
$(a_{1},\dots,a_{i-1},a,a_{i+1},\dot,a_{m})\in\rho$ if and only if 
$(a_{1},\dots,a_{i-1},b,a_{i+1},\dot,a_{m})\in\rho$ by the exchange
property $\ref{def:exchange}$ (note that $(a_{j},a_{j})\in\psi$
trivially), thus $a$ and $b$ are exchangeable, i.e.,
$(a,b)\in\rho^{\langle2\rangle}$, by Definition~\ref{exchange-1}. Thus $\psi\subseteq\rho^{\langle2\rangle}$.

\ref{A3a-b}: Let $(a,b)\in\rho^{\langle 2\rangle}$, i.e.,
$b\in[a]_{\rho^{\langle 2\rangle}}$. From \ref{A3a-a} we
conclude $\{a,b\}^{m}\subseteq[a]_{\rho^{\langle
    2\rangle}}\times\ldots\times[a]_{\rho^{\langle 2\rangle}}\subseteq
\rho$, thus $(a,b)\in\rho^{[2]}$ by Definition \ref{gEq-4}.

\ref{A3a-c}:\vspace*{-5ex}\begin{align*}
  (a,b)\in\bigcap_{i\in I}\rho_{i}^{\langle2\rangle} &\iff
\forall i\in I: (a,b)\in\rho_{i}^{\langle2\rangle}\\&\iff
\forall i\in I: a,b \text{ exchangeable with respect to }\rho_{i}\\
&\stackrel{\ref{exchange-1}}{\implies} a,b \text{ exchangeable with respect to }\bigcap_{i\in I}\rho_{i}\\
&\iff (a,b)\in (\bigcap_{i\in I}\rho_{i})^{\langle2\rangle}.&&&&\text{\qed}
\end{align*}
\renewcommand{\qedsymbol}{}
   \end{proof}

 \begin{remark}\label{rem:notmonotone} The monotonicity-property
\ref{lem:intersection1}\ref{intersection1-c} does not hold for $\rho\mapsto\rho^{\langle
     2\rangle}$, even not for generalized quasiorders. For example,
   let $A=\{0,1,2,3\}$ and $\delta_{3}:=\{(a,a,a)\mid a\in A\}$. Then
   $\sigma:=\{0,1\}^{3}\cup\delta_{3}$ and $\rho:=\sigma\cup\{(0,2,3)\}$
   are generalized quasiorders,
   $\sigma\subseteq\rho$ but $\sigma^{\langle
     2\rangle}\not\subseteq\rho^{\langle 2\rangle}$ because 
   $(0,1)\in\sigma^{\langle 2\rangle}\setminus \rho^{\langle
     2\rangle}$. In particular we have $\rho^{\langle
     2\rangle}\subsetneqq \rho^{[2]}$, since 
$(0,1)\in\rho^{[2]}\setminus \rho^{\langle 2\rangle}$. Moreover,
$\rho^{\langle2\rangle}=\Delta_{A}$.
 \end{remark}

Under some conditions factorization preserves the property of being a
generalized 
quasiorder as the implications ``$\Longrightarrow$'' in the following
proposition show:

 \begin{proposition}\label{A4}  Let $\rho\in\gQuorda[m](A)$ and 
   $\psi\in\Eq(A)$. Then we have
   \begin{enumerate}[label=\textup{(\alph*)}]
   \item\label{A4-a} $\psi\subseteq\rho^{[2]}\iff 
                   \rho/[\psi]\in\gQuord(A/\psi)$.
   \item\label{A4-b} $\psi\subseteq \rho^{\langle 2\rangle}
     \iff\rho/\psi\in\gQuord(A/\psi)$ and $\rho/\psi=\rho/[\psi]$.
   \item\label{A4-c}
     $(\rho/[\rho^{[2]}])^{[2]}=\Delta_{A/\rho^{[2]}}$,
     $\abs(\rho/\rho^{[2]})=\Delta^{(m)}_{A/\rho^{[2]}}$, 
     $(\rho/\rho^{\langle 2\rangle})^{\langle 2\rangle}=
      \Delta_{A/\rho^{\langle 2\rangle}}$.
   \end{enumerate}
 \end{proposition}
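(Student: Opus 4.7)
The plan is to handle (a)--(c) in order; parts (a) and (b) are straightforward matrix--transitivity exercises, while the middle statement of (c) is the one requiring real care.

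For (a), the ``$\Rightarrow$'' direction splits into reflexivity and transitivity of $\rho/[\psi]$. Reflexivity amounts to $B^m \subseteq \rho$ for each $\psi$-class $B$, which follows because $\psi \subseteq \rho^{[2]}$ places $B$ inside a single $\rho^{[2]}$-class $B'$, and Fact~\ref{fact:abs} together with $\abs(\rho) \subseteq \rho$ gives $B'^m \subseteq \rho$. Transitivity is the main computation: given $\rho/[\psi] \models (B_{ij})$ and any $(b_1, \ldots, b_m) \in B_{11} \times \cdots \times B_{mm}$, I build an auxiliary matrix $(c_{ij})$ in $A$ with $c_{ii} = b_i$ and $c_{ij}$ chosen freely in $B_{ij}$. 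Each row lies in $B_{i1} \times \cdots \times B_{im} \subseteq \rho$ and each column in $B_{1j} \times \cdots \times B_{mj} \subseteq \rho$ by the hypothesis on $(B_{ij})$, so transitivity of $\rho$ delivers the diagonal tuple. The ``$\Leftarrow$'' direction is immediate from reflexivity of $\rho/[\psi]$: $B^m \subseteq \rho$ forces $(a,b) \in \rho^{[2]}$ for any $a,b$ in a common $\psi$-class.

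For (b), I exploit the exchange property of $\rho^{\langle 2\rangle}$ with respect to $\rho$ (Lemma~\ref{A3a}\ref{A3a-a}), which any $\psi \subseteq \rho^{\langle 2\rangle}$ inherits. This lets me upgrade a single witness $(a'_1, \ldots, a'_m) \in \rho$ for $([a_1], \ldots, [a_m]) \in \rho/\psi$ to the full block product $[a_1] \times \cdots \times [a_m] \subseteq \rho$ (replacing each $a'_i$ by any chosen $b_i \in [a_i]$), yielding $\rho/\psi = \rho/[\psi]$; combined with $\psi \subseteq \rho^{\langle 2\rangle} \subseteq \rho^{[2]}$ (Lemma~\ref{A3a}\ref{A3a-b}) and part (a), this gives $\rho/\psi \in \gQuord(A/\psi)$. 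The converse unpacks the definitions directly: if $\rho/\psi = \rho/[\psi]$ and $(a,b) \in \psi$, then $(\ldots,a,\ldots) \in \rho$ forces $[a_1] \times \cdots \times [a] \times \cdots \times [a_m] \subseteq \rho$, hence $(\ldots,b,\ldots) \in \rho$, so $(a,b) \in \rho^{\langle 2\rangle}$.

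For (c), the first identity is short: $\{B,C\}^m \subseteq \rho/[\rho^{[2]}]$ means every block product over $\{B,C\}$ lies in $\rho$, so for $a \in B$, $b \in C$ and any $(c_1, \ldots, c_m) \in \{a,b\}^m$, matching the pattern forces the tuple into $\rho$, giving $(a,b) \in \rho^{[2]}$ and $B = C$. The third identity is a direct application of (b) with $\psi = \rho^{\langle 2\rangle}$: one has $\rho/\rho^{\langle 2\rangle} = \rho/[\rho^{\langle 2\rangle}] \in \gQuord$, and exchangeability of two blocks $[a],[b]$ in the quotient, via this equality, unpacks to exchangeability of $a,b$ in $\rho$, i.e.\ $[a]=[b]$. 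The middle identity $\abs(\rho/\rho^{[2]}) = \Delta^{(m)}_{A/\rho^{[2]}}$ is the delicate one, since $\rho/\rho^{[2]}$ need not itself be a generalized quasiorder (it is, by Proposition~\ref{homomorphism} combined with Proposition~\ref{prop:lambdaexch}, precisely when $\rho^{[2]}=\rho^{\langle 2\rangle}$). By Fact~\ref{fact:abs} applied to the reflexive $\rho/\rho^{[2]}$, the claim amounts to $(\rho/\rho^{[2]})^{[2]} = \Delta$. The main obstacle and the bulk of the work is thus to show $\{X, Y\}^m \subseteq \rho/\rho^{[2]} \Rightarrow X = Y$; my plan is to exploit the per-pattern witnesses in $\rho$, together with $X^m, Y^m \subseteq \rho$ (from Fact~\ref{fact:abs} applied to $\rho$), via an iterated matrix-transitivity argument to force $\{a,b\}^m \subseteq \rho$ for representatives $a \in X$, $b \in Y$---equivalently, to lift the hypothesis to $\{X,Y\}^m \subseteq \rho/[\rho^{[2]}]$, after which the first identity of (c) closes the argument.
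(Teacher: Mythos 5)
Your parts (a), (b), and the first and third identities of (c) are correct and follow essentially the same route as the paper's own proof (same reflexivity argument via Fact~\ref{fact:abs}, same transitivity matrix of representatives, same use of the exchange property from Lemma~\ref{A3a}), so nothing to add there.

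The genuine gap is exactly the point you yourself call ``the main obstacle and the bulk of the work'': the middle identity $\abs(\rho/\rho^{[2]})=\Delta^{(m)}_{A/\rho^{[2]}}$. Your reduction via Fact~\ref{fact:abs} to $(\rho/\rho^{[2]})^{[2]}=\Delta_{A/\rho^{[2]}}$ is correct, and you are right that this does \emph{not} follow from the first identity of (c): since $\rho/[\rho^{[2]}]\subseteq\rho/\rho^{[2]}$ and $\sigma\mapsto\sigma^{[2]}$ is monotone (Lemma~\ref{lem:intersection1}), the equality $(\rho/[\rho^{[2]}])^{[2]}=\Delta_{A/\rho^{[2]}}$ bounds the wrong side. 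But at precisely this point the proposal stops: ``an iterated matrix-transitivity argument'' is announced, not given, and that step carries all the difficulty. Concretely one must show: if $X\neq Y$ are $\rho^{[2]}$-classes such that \emph{every} pattern in $\{X,Y\}^{m}$ has some witness in $\rho$, then $\{a,b\}^{m}\subseteq\rho$ for fixed $a\in X$, $b\in Y$. The naive move---put a witness into one row and one column and fill in the targets---fails, because $\rho^{[2]}$ does not have the exchange property with respect to $\rho$ (this is exactly the difference between $\rho^{[2]}$ and $\rho^{\langle2\rangle}$), so an entry of a witness cannot simply be replaced by another element of its class. A complete argument needs an induction, say on the number of $X$-coordinates of the pattern $p$: fixing an $X$-position $i_{0}$ and a witness $w$, first (Step 1) show that every tuple of pattern $p$ with $i_{0}$-th entry $w_{i_{0}}$ lies in $\rho$, using a matrix whose $i_{0}$-th row and column equal $w$, whose other $X$-rows and $X$-columns are filled inside $X$ (hence lie in $\abs(\rho)\subseteq\rho$), and whose $Y$-rows and $Y$-columns carry the pattern with one $X$ fewer, covered by the induction hypothesis; then (Step 2) free the $i_{0}$-th coordinate by a second matrix whose $i_{0}$-th row and column lie in $X^{m}$ and whose remaining rows and columns are tuples supplied by Step 1. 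This does work, but it is a genuine half-page proof, and without it (or an equivalent argument) your treatment of the middle identity is incomplete.

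For comparison: the paper is vulnerable at the same spot. Its proof asserts $(\rho/\rho^{[2]})^{[2]}=\Delta_{A/\rho^{[2]}}$ ``as just shown'', although what was just shown concerns the block factor $\rho/[\rho^{[2]}]$, and---as you correctly observed---that citation runs the inclusion the wrong way. So your diagnosis is sharper than the published argument, but the proposal does not repair the hole either.
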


\begin{proof}

\ref{A4-a}: ``$\Longrightarrow$'': At first we show that $\rho/[\psi]$ is reflexive: Let $B=[a]_{\psi}$ be a block
of $\psi$ and let $b_{1},\dots,b_{m}\in B$
($i\in\{1,\dots,m\}$). By assumption $\psi\subseteq\rho^{[2]}$, we also have
$(b_{i},b_{j})\in\rho^{[2]}$ for all 
$i,j$. Consequently, $(b_{1},\dots,b_{m})\in(\rho^{[2]})^{\updownarrow
m}=\abs(\rho)\subseteq\rho$ (see Definition~\ref{A2c-2} and
Fact~\ref{fact:abs}). Since the $b_{i}\in B$ were chosen
arbitrarily, we conclude $B\times\ldots\times B\subseteq\rho$, i.e.,
$(B,\dots,B)\in\rho/[\psi]$ (cf.~\ref{A3-2}).

Now we show that $\rho/[\psi]$ is transitive: Let $\rho/[\psi] \models (B_{ij})$ for an
$m\times m$-matrix of blocks $B_{ij}$ of $\psi$. Choose $b_{ij}\in B_{ij}$
($i,j\in\um$). Then, by definition of a block factor relation, we have
$\rho\models(b_{ij})$. Since $\rho$ is transitive, we get
$(b_{11},\dots,b_{mm})\in\rho$. Because the $b_{ii}\in B_{ii}$ were chosen
arbitrarily, we can conclude $B_{11}\times\ldots\times
B_{mm}\subseteq\rho$, i.e., $(B_{11},\dots,B_{mm})\in \rho/[\psi]$.

``$\Longleftarrow$'': Let $(a,b)\in\psi$ and take some
$(a_{1},\dots,a_{m})\in\{a,b\}^{m}$. Then
\begin{align*}
  ([a_{1}]_{\psi},\dots,[a_{m}]_{\psi})=([a]_{\psi},\dots,[a]_{\psi})\in\rho/[\psi]
\end{align*}
since $\rho/[\psi]$ (as generalized quasiorder) is reflexive by
assumption. 
Thus, by de\-fini\-tion of
$\rho/[\psi]$, we have
$(a_{1},\dots,a_{m})\in\rho$ . Consequently $\{a,b\}^{m}\subseteq\rho$, i.e.,
$(a,b)\in\rho^{[2]}$. This shows $\psi\subseteq\rho^{[2]}$.

\ref{A4-b}: ``$\Longrightarrow$'':  Let
$\psi\subseteq\rho^{\langle2\rangle}$. 
If $\rho/\psi=\rho/[\psi]$ then from 
$\rho^{\langle 2\rangle}\subseteq\rho^{[2]}$ (cf.\ \ref{A3a}\ref{A3a-b})
and \ref{A4-a} (as just proved) follows
$\rho/\psi\in\gQuord(A)$. 
Thus it 
remains to prove 
$\rho/\psi\subseteq\rho/[\psi]$ (because
$\rho/\psi\supseteq\rho/[\psi]$ holds trivially). In fact, let
$(B_{1},\dots,B_{m})\in\rho/\psi$. Then, by definition, there exist
$a_{1}\in B_{1},\dots,a_{m}\in B_{m}$ such that
$(a_{1},\dots,a_{m})\in\rho$. From $\psi\subseteq\rho^{\langle2\rangle}$ and
\ref{A3a}\ref{A3a-a} we conclude $B_{1}\times\ldots\times
B_{m}=[a_{1}]_{\psi}\times\ldots\times[a_{m}]_{\psi}\subseteq[a_{1}]_{\rho^{\langle2\rangle}}\times\ldots\times[a_{m}]_{\rho^{\langle2\rangle}}\subseteq\rho$,
i.e., $(B_{1},\dots,B_{m})\in\rho/[\psi]$. This proves
$\rho/\psi\subseteq\rho/[\psi]$ and we are done.

``$\Longleftarrow$'': Let $(a,b)\in\psi$. We show
$(a,b)\in\rho^{\langle2\rangle}$, i.e., $a$ and $b$ are
exchangeable. In fact, if $(a_{1},\dots,a,\dots,a_{m})\in\rho$ then 
$([a_{1}]_{\psi},\dots,[a]_{\psi},\dots,[a_{m}]_{\psi})\in\rho/\psi=\rho/[\psi]$,
thus
$(a_{1},\dots,b,\dots,a_{m})\in
[a_{1}]_{\psi}\times\ldots\times[a]_{\psi}\times\ldots\times[a_{m}]_{\psi}
\subseteq\rho$
(note $b\in[a]_{\psi}$).

\ref{A4-c}:
To show $(\rho/[\rho^{[2]}])^{[2]}=\Delta_{A/\rho^{[2]}}$, let
$([a],[b])\in(\rho/[\rho^{[2]}])^{[2]}$ (for the time being, $[a]$
will denote $[a]_{\rho^{[2]}}$, we have to show $[a]=[b]$), i.e., 
$\{[a],[b]\}^{m}\subseteq \rho/[\rho^{[2]}]$ (cf.\ \ref{gEq-4}).
Therefore $[a_{1}]\times\ldots\times[a_{m}]\subseteq\rho$ for
$a_{1},\dots,a_{m}\in\{a,b\}$ by Definition~\ref{A3-2}, what implies
$(a_{1},\dots,a_{m})\in\rho$, consequently $\{a,b\}^{m}\subseteq\rho$, i.e.,
$(a,b)\in\rho^{[2]}$ and therefore $[a]=[b]$.

To show  $\abs(\rho/\rho^{[2]})=\Delta^{(m)}_{A/\rho^{[2]}}$, note
$(\abs(\rho/\rho^{[2]}))^{[2]}\subseteq
(\rho/\rho^{[2]})^{[2]}=\Delta_{A/\rho^{[2]}}$ (as just shown). Thus
$\abs(\rho/\rho^{[2]})=((\abs(\rho/\rho^{[2]}))^{[2]})^{\updownarrow
  m}=(\Delta_{A/\rho^{[2]}})^{\updownarrow m}
      =\Delta_{A/\rho^{[2]}}^{(m)}$
(the first equality follows from the fact that \ref{A2c}\ref{A2c-C}
can be applied because $\abs(\rho)\in\gEq(A)$ by \ref{A2a}\ref{A2a-a}).

Finally, to show $(\rho/\rho^{\langle 2\rangle})^{\langle 2\rangle}
        =\Delta_{A/\rho^{\langle 2\rangle}}$, 
let $([a],[b])\in(\rho/\rho^{\langle 2\rangle})^{\langle 2\rangle}$
(here and in
the following $[a]$ denotes $[a]_{\rho^{\langle 2\rangle}}$), i.e.,
$[a]$ and $[b]$ are exchangeable (cf.~\ref{def:exchangeability}) with
respect to $\rho/\rho^{\langle 2\rangle}$ and we have
\begin{align*}\tag{*}
  ([a_{1}],\dots,[a],\dots,[a_{m}])\in
         \rho/\rho^{\langle 2\rangle}\iff ([a_{1}],\dots,[b],\dots,[a_{m}])\in
         \rho/\rho^{\langle 2\rangle}
\end{align*}
(where $[a],[b]$ are at the $i$-th place, $i\in\{1,\dots,m\}$).
Thus we get
\begin{align*}
  (a_{1},\dots,a,\dots,a_{m})\in\rho&\stackrel{\ref{A3a}\ref{A3a-a}}{\iff}
     ([a_{1}],\dots,[a],\dots,[a_{m}])\in
         \rho/\rho^{\langle 2\rangle}\\
&\stackrel{\text{(*)}}{\iff} ([a_{1}],\dots,[b],\dots,[a_{m}])\in
         \rho/\rho^{\langle 2\rangle}\\
 &\stackrel{\ref{A3a}\ref{A3a-a}}{\iff}(a_{1},\dots,b,\dots,a_{m})\in\rho,
\end{align*}
i.e., $(a,b)\in\rho^{\langle 2\rangle}$ (cf.~\ref{def:exchangeability}), consequently $[a]=[b]$ and we
are done.
\end{proof}

Note that
   the above proof shows that for the implication ``$\Longleftarrow$'' in 
   \ref{A4}\ref{A4-a} only reflexivity of $\rho/[\psi]$ is
   needed. Consequently, by \ref{A4-a}, a reflexive relation
   $\rho/[\psi]$ is automatically a generalized quasiorder provided that $\rho$
   is a generalized quasiorder.

For the next Definition~\ref{def:gPord} we give here some preliminary
motivation. A binary relation $\sigma$ is antisymmetric if its
symmetric part $\sigma_{0}:=\sigma\cap\sigma^{-1}$ is 
trivial, i.e., contained in $\Delta_{A}$. 
We observe that $\sigma_{0}$ coincides with
$\tos(\sigma)=\bigcap_{\pi\in\Sym(\um)}\rho^{\pi}$
according to Definition~\ref{gEq-1}. This suggests to call
a relation $\rho\in\Rela[m](A)$ \New{antisymmetric},
if $\tos(\rho)\subseteq\Delta_{A}^{(m)}$ where
$\Delta_{A}^{(m)}:=\{(a,\dots,a)\in A^{m}\mid a\in A\}$ (what is a special
$m$-ary diagonal relation). If in addition $\rho$ is reflexiv, then $\tos(\rho)=\Delta_{A}^{(m)}$.
Therefore, an antisymmetric generalized quasiorder should be called
\New{generalized partial order} (reflexive, antisymmetric,
transitive) generalizing the binary notions.

However, for $\rho\in\gQuorda[m](A)$, we have
$\tos(\rho)=\abs(\rho)$ and $\abs(\rho)\in\gEq(A)$ by
\ref{A2a}\ref{A2a-a}, and thus
$\tos(\rho)=\abs(\rho)=\Delta_{A}^{(m)}\iff
\rho^{[2]}=_{\ref{rem:gEq}}(\abs(\rho))^{[2]}=\Delta_{A}$ what leads
to the equivalent definition in \ref{def:gPord} below.

Another nice feature of (binary) quasiorders is that $\sigma$ is
uniquely defined by its symmetric 
part $\sigma_{0}$ and the partial order on the
factor set $A/\sigma_{0}$ given by
$\sigma/\sigma_{0}=\{([a]_{\sigma_{0}},[b]_{\sigma_{0}})\in (A/\sigma_{0})^{2}\mid
(a,b)\in\sigma\}$. As we shall see in
Theorem~\ref{thm:characterization}, this result also can be
generalized, however $\rho^{[2]}$ cannot play the role of
$\sigma_{0}$ (the theorem does not hold for this in general), but one has to take
$\rho^{\langle2\rangle}$ instead. Because
$\rho^{\langle2\rangle}\subseteq\rho^{[2]}$
(cf.~\ref{A3a}\ref{A3a-b}) we get a weaker condition 
for the factor structure what led to the name \New{weak generalized
  partial order}. Clearly, each generalized partial order is also a
weak generalized order. For (binary) quasiorder these notions coincide
(since then $\sigma_{0}=\rho^{[2]}=\rho^{\langle2\rangle}$).

\begin{definition}\label{def:gPord} An $m$-ary generalized quasiorder $\rho$ is called
  \New{weak generalized partial order} or \New{generalized partial
    order}, resp., if the exchange equivalence
  $\rho^{\langle 2\rangle}$ or the binary symmetric part $\rho^{[2]}$,
  respectively, is trivial, i.e., equals $\Delta_{A}$.
The set of all weak generalized partial orders or generalized partial
orders, resp., on $A$ is denoted by $\wgPord(A)$ and $\gPord(A)$, resp. 
\end{definition}

  As obvious examples we mention that each $m$-ary diagonal relation except
  the full relation $A^{m}$ is a generalized partial order,
  i.e., $D_{A}^{(m)}\setminus\{A^{m}\}\subseteq\gPorda[m](A)$ for
  $m\in\N_{+}$, because $\{a,b\}^{m}\subseteq\rho$ (i.e., $(a,b)\in\rho^{[2]}$) implies that the
  projection to two components of $\rho$ never can be $\Delta_{A}$ if
  $a\neq b$. Clearly $A^{m}\in\gEq(A)$. Further examples
  of generalized partial orders will appear in connection with
  rectangular algebras in Section~\ref{sec:rectangular}.

The ternary relation $\rho$ defined in Remark~\ref{rem:notmonotone}
is an example for a weak
generalized partial order which is not a generalized partial order.

Note that the  set  $\gQuorda[m](A)$ is a finite lattice with respect
to inclusion 
(due to $A^{m}\in\gQuorda[m](A)$ and Lemma~\ref{C1}\ref{C1-3}).

\begin{proposition}\label{prop:gPordideal}  
  
$\gPorda[m](A)$ is an (order) ideal in the lattice $\gQuorda[m](A)$, i.e.,
$\rho\in\gPorda[m](A)$, $\sigma\in\gQuorda[m](A)$ and 
    $\sigma\subseteq\rho$ implies $\sigma\in\gPord(A)$.
    In particular, $\gPorda[m](A)$ is closed under intersections.
 
\end{proposition}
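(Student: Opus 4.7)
The plan is to reduce the ideal property to the monotonicity statement in Lemma~\ref{lem:intersection1}\ref{intersection1-c} together with reflexivity. Given $\sigma\in\gQuorda[m](A)$ with $\sigma\subseteq\rho$ and $\rho\in\gPorda[m](A)$, I would first apply monotonicity to get $\sigma^{[2]}\subseteq\rho^{[2]}$, and since $\rho\in\gPord(A)$ means $\rho^{[2]}=\Delta_{A}$, this yields $\sigma^{[2]}\subseteq\Delta_{A}$. Conversely, $\sigma$ is reflexive (as a generalized quasiorder), so for every $a\in A$ we have $(a,\dots,a)\in\sigma$, which shows $\{a,a\}^{m}=\{(a,\dots,a)\}\subseteq\sigma$, i.e., $(a,a)\in\sigma^{[2]}$. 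Hence $\Delta_{A}\subseteq\sigma^{[2]}$, and combining the two inclusions gives $\sigma^{[2]}=\Delta_{A}$, so $\sigma\in\gPord(A)$ by Definition~\ref{def:gPord}.

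For the ``in particular'' statement about intersections, the plan is to combine the ideal property just established with the fact that $\gQuord(A)$ is itself closed under intersection (Lemma~\ref{C1}\ref{C1-3}). Given a family $\{\rho_{i}\mid i\in I\}\subseteq\gPorda[m](A)$, the intersection $\tau:=\bigcap_{i\in I}\rho_{i}$ is a generalized quasiorder by that lemma (and nonempty families can be reduced to the binary case by iteration, while $A^{m}\in\gQuorda[m](A)$ handles the empty intersection). Since $\tau\subseteq\rho_{i_{0}}$ for any fixed $i_{0}\in I$ and $\rho_{i_{0}}\in\gPorda[m](A)$, the ideal property forces $\tau\in\gPorda[m](A)$.

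There is essentially no obstacle here; the only point worth being careful about is verifying that $\sigma^{[2]}$ automatically contains $\Delta_{A}$ whenever $\sigma$ is reflexive, which is an immediate consequence of Definition~\ref{gEq-4} (as already noted in Remark~\ref{rem:gEq}). Everything else is a one-line application of previously stated results.
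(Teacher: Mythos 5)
Your proposal is correct and follows essentially the same route as the paper: monotonicity of $\rho\mapsto\rho^{[2]}$ from Lemma~\ref{lem:intersection1}\ref{intersection1-c} gives $\sigma^{[2]}\subseteq\rho^{[2]}=\Delta_{A}$, and reflexivity supplies the reverse inclusion. You merely spell out the reverse inclusion and the intersection corollary more explicitly than the paper does, which leaves both implicit.
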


\begin{proof}
  $\sigma\subseteq\rho$ implies $\sigma^{[2]}\subseteq\rho^{[2]}$ by
  \ref{lem:intersection1}\ref{intersection1-c}. Thus
  $\rho^{[2]}=\Delta_{A}$ implies $\sigma^{[2]}=\Delta_{A}$.
\end{proof}

\begin{theorem}\label{thm:characterization}
  \begin{enumerate}[label=\textup{(\Alph*)}]
  \item \label{char-A}
Let $\rho\in\gQuorda[m](A)$. Then $\rho^{\langle2\rangle}\in\Eq(A)$,
$\rho/\rho^{\langle2\rangle}\in\wgPord(A/\rho^{\langle2\rangle})$ and we have
\begin{align*}\label{char-1}\tag*{\ref{thm:characterization}(1)}
  \rho&=\{(a_{1},\dots,a_{m})\in A^{m}\mid
  ([a_{1}]_{\rho^{\langle 2 \rangle}},\dots,[a_{m}]_{\rho^{\langle 2
  \rangle}})\in \rho/\rho^{\langle 2 \rangle}\}\\
&=\bigcup\{B_{1}\times\ldots\times B_{m}\mid (B_{1},\dots,B_{m})\in
                                                     \rho/\rho^{\langle
                                                     2 \rangle}\}.
\end{align*}
\item \label{char-B}Let $\sigma\in\Eq(A)$ and $\tau\in\wgPorda[m](A/\sigma)$. Then
\begin{align*}\label{char-2}\tag*{\ref{thm:characterization}(2)}
  \rho&:=\{(a_{1},\dots,a_{m})\in A^{m}\mid
  ([a_{1}]_{\sigma},\dots,[a_{m}]_{\sigma})\in \tau\}\\
&\phantom{:}=\bigcup\{B_{1}\times\ldots\times B_{m}\mid (B_{1},\dots,B_{m})\in
                                                     \tau\}
\end{align*}
is a generalized quasiorder, $\rho^{\langle 2\rangle}=\sigma$ and
$\rho/\rho^{\langle 2\rangle}=\tau$.

\item\label{char-C} Let $\rho$ and $\tau$ be as is in \ref{char-B}. Then $\tau\in\gPord(A)\iff \rho^{\langle2\rangle}=\rho^{[2]}$.
  \end{enumerate}
\end{theorem}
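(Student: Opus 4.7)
The plan is to handle (A) quickly using Lemma~\ref{A3a} and Proposition~\ref{A4}, then to reduce (B) to the homomorphism machinery of Proposition~\ref{homomorphism} via the canonical projection $\lambda:A\to A/\sigma$, and finally to derive (C) by combining (B) with Proposition~\ref{A4}\ref{A4-c}.

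For (A), I first note that $\rho^{\langle 2\rangle}\in\Eq(A)$ by Lemma~\ref{A3a}\ref{A3a-a}. Applying Proposition~\ref{A4}\ref{A4-b} with $\psi:=\rho^{\langle 2\rangle}$ (the required inclusion being trivial) gives both $\rho/\rho^{\langle 2\rangle}\in\gQuord(A/\rho^{\langle 2\rangle})$ and the identity $\rho/\rho^{\langle 2\rangle}=\rho/[\rho^{\langle 2\rangle}]$. Proposition~\ref{A4}\ref{A4-c} then yields $(\rho/\rho^{\langle 2\rangle})^{\langle 2\rangle}=\Delta_{A/\rho^{\langle 2\rangle}}$, so $\rho/\rho^{\langle 2\rangle}$ is a weak generalized partial order in the sense of Definition~\ref{def:gPord}. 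The two descriptions of $\rho$ in \ref{char-1} are immediate consequences of the equivalences \ref{A3a-a1} and \ref{A3a-a2} in Lemma~\ref{A3a}\ref{A3a-a}.

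For (B), the constructed $\rho$ is precisely $\lambda^{-1}(\tau)$. Since $\tau\in\wgPord(A/\sigma)\subseteq\gQuord(A/\sigma)$, Proposition~\ref{homomorphism} gives $\rho\in\gQuord(A)$ together with $\lambda^{-1}(\lambda(\rho))=\rho$, and Proposition~\ref{prop:lambdaexch} then shows that $\sigma=\ker\lambda$ has the exchange property with respect to $\rho$. Maximality in Lemma~\ref{A3a}\ref{A3a-a} therefore gives $\sigma\subseteq\rho^{\langle 2\rangle}$. For the reverse inclusion I pass exchangeability through $\lambda$: any $(a,b)\in\rho^{\langle 2\rangle}$ produces, via $\rho=\lambda^{-1}(\tau)$, a pair $([a]_\sigma,[b]_\sigma)$ that is exchangeable in $\tau$ and hence lies in $\tau^{\langle 2\rangle}=\Delta_{A/\sigma}$, forcing $(a,b)\in\sigma$. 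I expect this transfer of exchangeability to be the only delicate point; once it is in hand, the identity $\rho/\rho^{\langle 2\rangle}=\tau$ is routine, since $\tau\subseteq\rho/\sigma$ is built into the definition of $\rho$, while a representative chosen from each block of a tuple in $\rho/\sigma$ lies in $\rho$ and is therefore placed back in $\tau$ by construction.

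For (C), by (B) I have $\sigma=\rho^{\langle 2\rangle}$ and $\tau=\rho/\rho^{\langle 2\rangle}=\rho/[\rho^{\langle 2\rangle}]$ (the last equality again from Lemma~\ref{A3a}\ref{A3a-a}). If $\rho^{\langle 2\rangle}=\rho^{[2]}$, then $\tau^{[2]}=(\rho/[\rho^{[2]}])^{[2]}=\Delta_{A/\rho^{[2]}}$ by Proposition~\ref{A4}\ref{A4-c}, giving $\tau\in\gPord$. Conversely, assume $\tau^{[2]}=\Delta_{A/\sigma}$ and take $(a,b)\in\rho^{[2]}$, i.e.\ $\{a,b\}^m\subseteq\rho$; projecting by $\lambda$ gives $\{[a]_\sigma,[b]_\sigma\}^m\subseteq\tau$, so $([a]_\sigma,[b]_\sigma)\in\tau^{[2]}=\Delta_{A/\sigma}$ and therefore $(a,b)\in\sigma=\rho^{\langle 2\rangle}$. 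Combined with the universal inclusion $\rho^{\langle 2\rangle}\subseteq\rho^{[2]}$ of Lemma~\ref{A3a}\ref{A3a-b}, this yields the required equality $\rho^{\langle 2\rangle}=\rho^{[2]}$.
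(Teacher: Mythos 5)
Your proof is correct. Parts (A) and (C) follow the paper's own argument essentially verbatim: (A) is assembled from Lemma~\ref{A3a}\ref{A3a-a} and Proposition~\ref{A4}\ref{A4-b},\ref{A4-c}, and (C) uses exactly the same two implications (the reverse one by projecting $\{a,b\}^{m}\subseteq\rho$ into the factor and invoking $\tau^{[2]}=\Delta$). The only genuine divergence is in (B). The paper verifies $\rho\in\gQuord(A)$ by hand (reflexivity from reflexivity of $\tau$; transitivity by lifting a matrix condition $\rho\models(a_{ij})$ to $\tau\models([a_{ij}]_{\sigma})$) and then establishes $\rho^{\langle2\rangle}=\sigma$ through a single ``exchangeable w.r.t.\ $\rho$ iff exchangeable w.r.t.\ $\tau$'' equivalence. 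You instead recognize $\rho=\lambda^{-1}(\tau)$ for the canonical surjection $\lambda:A\to A/\sigma$ and let Proposition~\ref{homomorphism} deliver both $\rho\in\gQuord(A)$ and $\lambda^{-1}(\lambda(\rho))=\rho$, then obtain $\sigma\subseteq\rho^{\langle2\rangle}$ from Proposition~\ref{prop:lambdaexch} combined with the maximality clause of Lemma~\ref{A3a}\ref{A3a-a}, and only the reverse inclusion by a direct transfer of exchangeability. This buys you a shorter argument that reuses machinery already proved in Section~\ref{sec:constructions} instead of repeating the matrix-lifting computation; the paper's version is more self-contained and makes the two-way correspondence of exchangeability explicit, which is the fact one actually needs in both directions. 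Both are complete; your transfer of $(a,b)\in\rho^{\langle2\rangle}$ to $([a]_{\sigma},[b]_{\sigma})\in\tau^{\langle2\rangle}$ does hold, since representatives of a $\tau$-tuple can be chosen arbitrarily by the definition of $\rho=\lambda^{-1}(\tau)$.
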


\begin{proof} \ref{char-A}: The factor relation 
$\rho/\rho^{\langle 2\rangle}$ is a generalized quasiorder by
\ref{A4}\ref{A4-b} and it is a weak partial order since
$(\rho/\rho^{\langle2\rangle})^{\langle2\rangle}=\Delta_{A/\rho^{\langle2\rangle}}$ by
\ref{A4}\ref{A4-c}. Moreover, $\rho/\rho^{\langle
  2\rangle}=\rho/[\rho^{\langle 2\rangle}]$ also by
\ref{A4}\ref{A4-b}. Therefore, \ref{char-1} is a direct consequence of
\ref{A3a-a2}.

\ref{char-B}: At first we show $\rho\in\gQuord(A)$. Reflexivity is
clear (because $\tau$ is reflexive). To show transitivity, let
$\rho\models (a_{ij})_{i,j\in\um}$. By definition of $\rho$ we have
$\tau\models ([a_{ij}]_{\sigma})_{i,j\in\um}$. 
Thus $([a_{11}]_{\sigma},\dots,[a_{mm}]_{\sigma})\in\tau$ by
transitivity of $\tau$. Consequently $(a_{11},\dots,a_{mm})\in\rho$ by
definition of $\rho$.

To see $\rho^{\langle 2\rangle}=\sigma$, observe that (by definition
\ref{char-2}) $a,b\in A$ are exchangeable with respect to $\rho$,
i.e.,
 $(a,b)\in\rho^{\langle 2\rangle}$, if and only if
$[a]_{\sigma},[b_{\sigma}]$ are exchangeable with respect
to $\tau$, i.e.,
$([a]_{\sigma},[b]_{\sigma})\in\tau^{\langle2\rangle}$. Because $\tau$ is
a weak generalized partial order, i.e.,
$\tau^{\langle2\rangle}=\Delta_{A/\sigma}$, the latter is equivalent
to
$[a]_{\sigma}=[b]_{\sigma}$, i.e., $(a,b)\in\sigma$.

It remains to prove $\tau=\rho/\rho^{\langle 2\rangle}$. In fact,
 $([a_{1}]_{\sigma},\dots,[a_{m}]_{\sigma})\in\tau \iff
(a_{1},\dots,a_{m})\in\rho\iff
([a_{1}]_{\rho^{\langle 2\rangle}},\dots,[a_{m}]_{\rho^{\langle 2\rangle}})
 \in\rho/\rho^{\langle 2\rangle}\iff 
([a_{1}]_{\sigma},\dots,[a_{m}]_{\sigma})\in\rho/\rho^{\langle 2\rangle}$ 
where the 
first equivalence follows from the definition \ref{char-2} of $\rho$, the second
and third equivalence follow from \ref{char-A} and
$\rho^{\langle 2\rangle}=\sigma$ (as just proved).

\ref{char-C}: Taking into account \ref{char-B} we have to prove 
\begin{align*}
  \tau=\rho/\rho^{\langle2\rangle}\in\gPord(A)&\iff
  \rho^{\langle2\rangle}=\rho^{[2]}, \text{ i.e., }\\
 (\rho/\rho^{\langle2\rangle})^{[2]}=\Delta_{A/\rho^{\langle2\rangle}}
    &\iff \rho^{\langle2\rangle}=\rho^{[2]}.
\end{align*}
"$\Longleftarrow$'':
$(\rho/\rho^{\langle2\rangle})^{[2]}
  =(\rho/\rho^{[2]})^{[2]}\stackrel{\ref{A4}\ref{A4-c}}{=}\Delta_{A/\rho^{[2]}}
  =\Delta_{A/\rho^{\langle2\rangle}}$.

"$\Longrightarrow$'': Assume 
$(\rho/\rho^{\langle2\rangle})^{[2]}=\Delta_{A/\rho^{\langle2\rangle}}$. We
have to show $\rho^{[2]}\subseteq\rho^{\langle2\rangle}$ (because 
$\rho^{\langle2\rangle}\subseteq\rho^{[2]}$ by \ref{A3a}\ref{A3a-b}).
Let $(a,b)\in\rho^{[2]}$, i.e., $\{a,b\}^{m}\subseteq\rho$. This implies 
$\{[a]_{\rho^{\langle2\rangle}},[b]_{\rho^{\langle2\rangle}}\}^{m}\subseteq\rho/\rho^{\langle2\rangle}$
by definition of a factor relation (cf.\ \ref{A3-1}). Consequently 
$([a]_{\rho^{\langle2\rangle}},[b]_{\rho^{\langle2\rangle}})\in
(\rho/\rho^{\langle2\rangle})^{[2]}=\Delta_{A/\rho^{\langle2\rangle}}$
(the last equation by assumption), thus
$[a]_{\rho^{\langle2\rangle}}=[b]_{\rho^{\langle2\rangle}}$, i.e.,
$(a,b)\in\rho^{\langle2\rangle}$, and we are done.
\end{proof}

As an immediate consequence of Theorem
\ref{thm:characterization}\ref{char-A} and \ref{char-B} we have:
\begin{corollary}\label{cor:characterization}
  There is a one-to-one correspondence between
  generalized quasiorders on $A$ and arbitrary pairs consisting of an
  equivalence relation $\sigma$ on $A$ and a weak generalized partial
  order on $A/\sigma$, given by
$\rho\mapsto (\rho^{\langle 2\rangle},\rho/\rho^{\langle 2\rangle})$. 
 In particular, $\rho^{\langle 2\rangle}=\rho'^{\langle 2\rangle}$
 and $\rho/\rho^{\langle 2\rangle}=\rho'/\rho'^{\langle 2\rangle}$
 implies $\rho=\rho'$ for any $\rho,\rho'\in\gQuord(A)$.\qed
\end{corollary}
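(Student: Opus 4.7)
The plan is to verify that, for each fixed arity $m$, the map
$\Phi\colon \rho\mapsto(\rho^{\langle 2\rangle},\rho/\rho^{\langle 2\rangle})$
is a bijection between $\gQuorda[m](A)$ and the set of pairs $(\sigma,\tau)$
with $\sigma\in\Eq(A)$ and $\tau\in\wgPorda[m](A/\sigma)$; the full
statement then follows by taking the disjoint union over $m\in\N_{+}$.
Every ingredient already lives in Theorem~\ref{thm:characterization}, so
the argument is essentially bookkeeping.

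First I would confirm that $\Phi$ is well-defined. This is precisely
part~\ref{char-A} of Theorem~\ref{thm:characterization}, which asserts
that $\rho^{\langle 2\rangle}\in\Eq(A)$ (cf.\ also
Lemma~\ref{A3a}\ref{A3a-a}) and that $\rho/\rho^{\langle 2\rangle}$ is a
weak generalized partial order on $A/\rho^{\langle 2\rangle}$.

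Next I would check surjectivity of $\Phi$. Given any pair $(\sigma,\tau)$
with $\sigma\in\Eq(A)$ and $\tau\in\wgPorda[m](A/\sigma)$, I would define
$\rho$ by the block-union formula \ref{char-2}. Part~\ref{char-B} of the
theorem then yields $\rho\in\gQuord(A)$ together with
$\rho^{\langle 2\rangle}=\sigma$ and $\rho/\rho^{\langle 2\rangle}=\tau$,
so $\Phi(\rho)=(\sigma,\tau)$.

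The main point is injectivity, which also falls out of part~\ref{char-A}.
The identity \ref{char-1} expresses $\rho$ explicitly in terms of
$\rho^{\langle 2\rangle}$ and $\rho/\rho^{\langle 2\rangle}$ alone, so two
generalized quasiorders $\rho,\rho'\in\gQuorda[m](A)$ satisfying
$\rho^{\langle 2\rangle}=\rho'^{\langle 2\rangle}$ and
$\rho/\rho^{\langle 2\rangle}=\rho'/\rho'^{\langle 2\rangle}$ must coincide,
since both unfold via \ref{char-1} to the very same union of boxes
$B_{1}\times\cdots\times B_{m}$ indexed by the common factor relation.
There is no substantive obstacle here; the only mild caveat is that the
correspondence is arity-preserving, which is clear since $\rho$ and
$\rho/\rho^{\langle 2\rangle}$ share the arity $m$ while
$\rho^{\langle 2\rangle}$ is binary throughout.
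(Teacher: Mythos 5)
Your proposal is correct and follows exactly the route the paper intends: the corollary is stated there as an immediate consequence of Theorem~\ref{thm:characterization}, with part~\ref{char-A} giving well-definedness and injectivity (via the reconstruction formula \ref{char-1}) and part~\ref{char-B} giving surjectivity. Your write-up merely makes this bookkeeping explicit, including the harmless observation that the correspondence is arity-preserving.
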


In view of Theorem~\ref{thm:characterization} it arises the question
under which conditions we have equality
$\rho^{\langle2\rangle}=\rho^{[2]}$. This is relevant because then one of the two characterizing
parts of a generalized quasiorder, namely 
$\rho/\rho^{\langle2\rangle}$, is not only a weak generalized partial
order but a generalized partial order. An answer is given in the
next propositions.

\begin{proposition}\label{prop:gEq3}
  Let $\rho\in\gQuorda[m](A)$, let $\lambda$ be the canonical
  mapping $\lambda:x\mapsto[x]_{\rho^{[2]}}$ and
  $\tau:=\lambda(\rho)$. Then the following are equivalent:
  \begin{enumerate}[label=\textup{(\roman*)}]
  \item\label{gEq3-i} $\rho^{\langle2\rangle}=\rho^{[2]}$,
  \item\label{gEq3-ii} $\rho=\lambda^{-1}(\lambda(\rho))$,
  \item\label{gEq3-iii} $\rho$ is the largest $\sigma\in\gQuorda[m](A)$ with the
    property $\lambda(\sigma)=\tau$.
  \end{enumerate}

\end{proposition}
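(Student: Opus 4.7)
The plan is to establish (i) $\Leftrightarrow$ (ii) and (ii) $\Leftrightarrow$ (iii); together these yield the three-way equivalence. First I would handle (i) $\Leftrightarrow$ (ii): since $\ker\lambda=\rho^{[2]}$ by the very definition of $\lambda$, Proposition~\ref{prop:lambdaexch} tells us that (ii) is equivalent to $\rho^{[2]}$ having the exchange property with respect to $\rho$. By Lemma~\ref{A3a}\ref{A3a-a}, the exchange equivalence $\rho^{\langle 2\rangle}$ is the largest equivalence with this property, so (ii) holds iff $\rho^{[2]}\subseteq\rho^{\langle 2\rangle}$; combining this with the reverse inclusion from Lemma~\ref{A3a}\ref{A3a-b}, the two collapse to the equality in~(i).

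For (ii) $\Rightarrow$ (iii), I would simply observe that any $\sigma\in\gQuord(A)$ with $\lambda(\sigma)=\tau$ satisfies $\sigma\subseteq\lambda^{-1}(\lambda(\sigma))=\lambda^{-1}(\tau)=\rho$ using (ii), while $\lambda(\rho)=\lambda(\lambda^{-1}(\tau))=\tau$ by surjectivity of $\lambda$; hence $\rho$ itself lies in the family and dominates every other member.

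For (iii) $\Rightarrow$ (ii), I plan to argue the contrapositive: assuming $\rho\subsetneq\lambda^{-1}(\tau)$, I will exhibit $\sigma\in\gQuord(A)$ with $\lambda(\sigma)=\tau$ and $\sigma\not\subseteq\rho$, contradicting the maximality in~(iii). I would pick $t\in\lambda^{-1}(\tau)\setminus\rho$ and some $t'\in\rho$ with $\lambda(t)=\lambda(t')$, so that $(t_i,t'_i)\in\rho^{[2]}$ componentwise, and then construct $\sigma$ as the generalized quasiorder generated by the reflexive diagonal $\delta_A^{(m)}$ together with $t$ and, for every $b\in\tau\setminus\{\lambda(t)\}$, a chosen $\rho$-preimage of~$b$. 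By construction $t\in\sigma\setminus\rho$, giving $\sigma\not\subseteq\rho$; the main obstacle will be to verify that $\lambda(\sigma)=\tau$, i.e., that the $\partial$-closure does not introduce any tuple whose $\lambda$-image escapes $\tau$. This I would handle by induction on the number of iterations of $\partial$, exploiting that every admissible matrix built from the skeleton has rows and columns with $\lambda$-images in $\tau$ and that, by the careful choice of representatives, the diagonals produced at each step again land in $\lambda^{-1}(\tau)$.
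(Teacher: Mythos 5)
Your treatment of (i)$\iff$(ii) is exactly the paper's argument: $\ker\lambda=\rho^{[2]}$, Proposition~\ref{prop:lambdaexch} converts (ii) into the exchange property of $\rho^{[2]}$ with respect to $\rho$, and Lemma~\ref{A3a}\ref{A3a-a} together with \ref{A3a}\ref{A3a-b} converts that into $\rho^{\langle2\rangle}=\rho^{[2]}$. Your proof of (ii)$\implies$(iii) also agrees with the paper: every $\sigma$ with $\lambda(\sigma)=\tau$ (generalized quasiorder or not) lies in $\lambda^{-1}(\tau)$, which under (ii) equals $\rho$. Note, however, that the paper settles (ii)$\iff$(iii) entirely by the remark that $\lambda^{-1}(\tau)$ is the largest relation whatsoever with $\lambda$-image $\tau$; it never constructs a competing generalized quasiorder. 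Your contrapositive argument for (iii)$\implies$(ii) is therefore a genuinely different (and much harder) route, and it is the part that does not go through.

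The gap is the inductive step of your construction. The claim that ``every admissible matrix built from the skeleton has rows and columns with $\lambda$-images in $\tau$'' and that therefore ``the diagonals produced at each step again land in $\lambda^{-1}(\tau)$'' is precisely the assertion that $\lambda^{-1}(\tau)$ is transitive, equivalently (Proposition~\ref{homomorphism}) that $\tau=\lambda(\rho)$ is a generalized quasiorder. This is not available: Proposition~\ref{A4}\ref{A4-a} only guarantees it for the \emph{block} factor $\rho/[\rho^{[2]}]$, and in the only case you have to treat, namely $\rho^{\langle2\rangle}\neq\rho^{[2]}$, the factor $\tau=\rho/\rho^{[2]}$ may fail to be transitive. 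Worse, the ``careful choice of representatives'' need not exist within your scheme, because your representatives are required to be $\rho$-preimages and these can be unique. Concretely, on $A=\{0,1,a,b,c,d,e,f,q,r\}$ let
$\rho=\Delta_A^{(3)}\cup\{0,1\}^{3}\cup\{(0,a,b),(c,q,d),(e,f,r),(1,c,e),(a,q,f),(b,d,r)\}$;
one checks that this is a generalized quasiorder whose $\rho^{[2]}$ has the single nontrivial block $\{0,1\}$ while $\rho^{\langle2\rangle}=\Delta_A$, and that $\lambda^{-1}(\tau)\setminus\rho=\{(1,a,b),(0,c,e)\}$. Take $t=(1,a,b)$: the other five nondiagonal classes of $\tau$ have \emph{unique} $\rho$-preimages, namely $(c,q,d)$, $(e,f,r)$, $(1,c,e)$, $(a,q,f)$, $(b,d,r)$, so your skeleton must contain all six tuples; but then the matrix $\left(\begin{smallmatrix}1&a&b\\ c&q&d\\ e&f&r\end{smallmatrix}\right)$ has all rows and columns in the skeleton and its diagonal $(1,q,r)$ lies outside $\lambda^{-1}(\tau)$, so $\lambda(\sigma)\neq\tau$; the choice $t=(0,c,e)$ fails symmetrically via the diagonal $(0,q,r)$. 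A witness against (iii) does exist here, e.g.\ $\Delta_A^{(3)}\cup\{(1,a,b),(0,c,e),(c,q,d),(e,f,r),(a,q,f),(b,d,r)\}$, but it contains \emph{two} tuples outside $\rho$ and omits two tuples of $\rho$ --- it is not of the form your recipe can produce. So (iii)$\implies$(ii) remains unproved in your proposal: any correct argument must allow the witnessing generalized quasiorder to exchange several $\rho$-tuples simultaneously for tuples outside $\rho$, and justifying that such a witness always exists is essentially the whole content of this direction.
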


\begin{proof} Note $\ker\lambda=\rho^{[2]}$. Since
  $\rho^{\langle2\rangle}$ is the largest equivalence relation with exchange
  property (cf.\ \ref{A3a}\ref{A3a-a}) with respect to $\rho$, the equivalence 
\ref{gEq3-i}$\iff$\ref{gEq3-ii} follows from
Proposition~\ref{prop:lambdaexch}.

\ref{gEq3-ii}$\iff$\ref{gEq3-iii}: Obviously, $\rho$
is the full 
preimage of $\tau$ under $\lambda$ if and only if $\rho$ is the largest $\sigma$
with $\lambda(\sigma)=\tau$.
\end{proof}

 \begin{corollary}\label{prop:gEq2}
Let $\theta\in\gEqa[m](A)$. Then $\theta^{\langle
  2\rangle}=\theta^{[2]}$, in particular we have
$(\abs(\rho))^{\langle2\rangle}=(\abs(\rho))^{[2]}$ for $\rho\in\gQuord(A)$.
 \end{corollary}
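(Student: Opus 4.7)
The plan is to prove $\theta^{\langle2\rangle}=\theta^{[2]}$ by showing the reverse inclusion $\theta^{[2]}\subseteq\theta^{\langle2\rangle}$, since the inclusion $\theta^{\langle2\rangle}\subseteq\theta^{[2]}$ is already established in Lemma~\ref{A3a}\ref{A3a-b} for arbitrary relations. The crucial tool will be Theorem~\ref{A2c}\ref{A2c-A}\ref{A2c-i}, which tells us that membership in the generalized equivalence $\theta$ is completely determined by pairwise membership in the binary symmetric part $\theta^{[2]}$.

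Concretely, I would take $(a,b)\in\theta^{[2]}$ and verify exchangeability of $a$ and $b$ with respect to $\theta$ in the sense of Definition~\ref{def:exchangeability}. Fix an index $i\in\um$ and elements $a_{1},\dots,a_{i-1},a_{i+1},\dots,a_{m}\in A$, and assume $(a_{1},\dots,a_{i-1},a,a_{i+1},\dots,a_{m})\in\theta$. By Theorem~\ref{A2c}\ref{A2c-A}\ref{A2c-i}, this is equivalent to $(a_{k},a_{\ell})\in\theta^{[2]}$ for all $k,\ell\in\um$ (reading $a_{i}:=a$). To conclude that $(a_{1},\dots,a_{i-1},b,a_{i+1},\dots,a_{m})\in\theta$, I need the analogous statement with $a$ replaced by $b$. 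Since $\theta^{[2]}$ is an equivalence relation by Proposition~\ref{A2a}\ref{A2a-b}, I can combine $(a_{k},a)\in\theta^{[2]}$ (for $k\neq i$) with $(a,b)\in\theta^{[2]}$ via transitivity and symmetry to get $(a_{k},b),(b,a_{k})\in\theta^{[2]}$; reflexivity handles the case $k=\ell=i$. Applying Theorem~\ref{A2c}\ref{A2c-A}\ref{A2c-i} again yields the desired membership. The reverse implication in \ref{exchange-1} follows by interchanging the roles of $a$ and $b$ and using symmetry of $\theta^{[2]}$. Hence $(a,b)\in\theta^{\langle2\rangle}$.

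For the final clause, the particular case follows immediately: by Proposition~\ref{A2a}\ref{A2a-a}, $\abs(\rho)\in\gEq(A)$ whenever $\rho\in\gQuord(A)$, so applying the first part with $\theta:=\abs(\rho)$ gives $(\abs(\rho))^{\langle2\rangle}=(\abs(\rho))^{[2]}$.

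I do not anticipate a major obstacle here; the argument is essentially a bookkeeping exercise made possible by the strong rigidity result Theorem~\ref{A2c}\ref{A2c-A}\ref{A2c-i}. The only subtle point to be careful about is verifying exchangeability in \emph{both} directions of the biconditional \ref{exchange-1}, but this is symmetric once we know $\theta^{[2]}$ is symmetric. Everything else is a direct application of previously established properties.
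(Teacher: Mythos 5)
Your argument is correct and is in substance the paper's own proof: both rest on Theorem~\ref{A2c}\ref{A2c-A}\ref{A2c-i} together with the fact that $\theta^{[2]}$ is an equivalence relation (Proposition~\ref{A2a}\ref{A2a-b}), using transitivity of $\theta^{[2]}$ to transport the pairwise memberships from one tuple to the tuple with $a$ replaced by $b$. The only difference is one of packaging: the paper routes the reduction through Proposition~\ref{prop:gEq3}, verifying $\lambda^{-1}(\lambda(\theta))=\theta$ for the canonical map $\lambda:x\mapsto[x]_{\theta^{[2]}}$, whereas you verify exchangeability directly from Definition~\ref{def:exchangeability}; both amount to showing that $\theta^{[2]}$ has the exchange property with respect to $\theta$ and then invoking the maximality of $\theta^{\langle 2\rangle}$.
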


\begin{proof}
According to Proposition \ref{prop:gEq3} it is enough to prove
$\lambda^{-1}(\lambda(\theta))=\theta$ for the canonical mapping
$\lambda:x\mapsto[x]_{\theta^{[2]}}$. The inclusion ``$\supseteq$'' is
trivial, so it remains to prove $\lambda^{-1}(\lambda(\theta))\subseteq\theta$.
Let $(a_{1},\dots,a_{m})\in\lambda^{-1}(\lambda(\theta))$, i.e.,
$(\lambda(a_{1}),\dots,\lambda(a_{m}))\in\lambda(\theta)$. Therefore
exists $(b_{1},\dots,b_{m})\in\theta$ such that 
$(\lambda(a_{1}),\dots,\lambda(a_{m}))=(\lambda(b_{1}),\dots,\lambda(b_{m}))$,
i.e., $(a_{i},b_{i})\in\ker\lambda=\theta^{[2]}$. From \ref{A2c-1} we
conclude $\forall i,j: (b_{i},b_{j})\in\theta^{[2]}$. Thus
$(a_{i},b_{i}),(b_{i},b_{j}),(a_{j},b_{j})\in\theta^{[2]}$ for all
$i,j$. Since $\theta^{[2]}$ is an equivalence relation this implies 
$\forall
a_{i},a_{j}:(a_{i},a_{j})\in\theta^{[2]}$, i.e.,
$(a_{1},\dots,a_{m})\in\theta$ by \ref{A2c-1}, and we are done.

In particular the result applies to $\abs(\rho)$ since
$\abs(\rho)\in\gEq(A)$ by \ref{A2a}\ref{A2a-a}.
\end{proof}

 \begin{remark}\label{rem:A4}
By Proposition~\ref{A2c}, generalized equivalence relations $\theta$ can be
reduced to binary equivalence relations (namely $\theta^{[2]}$). This
also holds for block factor relations: for given $\theta\in\gEq(A)$ and
$\psi\in\Eq(A)$ we have
$\theta/[\psi]=(\theta^{[2]}/[\psi])^{\updownarrow m}$.

In fact, 
\begin{align*}
  ([a_{1}]_{\psi},\dots,[a_{m}]_{\psi})\in\theta/[\psi]
&\stackrel{\ref{A3-2}}{\iff} [a_{1}]_{\psi}\times\ldots\times[a_{m}]_{\psi}\subseteq\theta\stackrel{\ref{A2c}\ref{A2c-C}}{=}(\theta^{[2]})^{\updownarrow m}\\
&\stackrel{\ref{A2c-2}}{\iff} \forall i,j: [a_{i}]_{\psi}\times[a_{j}]_{\psi}\subseteq\theta^{[2]}\\
&\stackrel{\ref{A3-2}}{\iff} \forall i,j: ([a_{i}]_{\psi},[a_{j}]_{\psi})\in\theta^{[2]}/[\psi]\\
&\stackrel{\ref{A2c-2}}{\iff} ([a_{1}]_{\psi},\dots,[a_{m}]_{\psi})\in(\theta^{[2]}/[\psi])^{\updownarrow m}.
\end{align*}
 \end{remark}

\section{The generalized quasiorders of maximal
  clones given by 
equivalence relations or partial orders}\label{sec:maxclones} 

Let $\rho\in\Rela[2](A)$ be a non-trivial
equivalence relation or a partial order (reflexive,
antisymmetric and transitive relation on $A$) with least and greatest element
(denoted by $0$ and $1$, respectively). It is well-known that then the
clone $\Pol\rho$ is a maximal clone in the clone lattice over the base
set $A$.

We ask for all invariant relations
of these maximal clones which are generalized quasiorders, or from the
relational point of view, we ask for a characterization of the set
$[\rho]_{(\exists,\land,=)}\cap \gQuord(A)
    =\Inv\Pol\rho\cap\gQuord(A)=\gQuord\Pol\rho$.

In particular cases, treated first (see Theorems~\ref{thm:eqrel},
\ref{thm:latticeorder} and \ref{thm:boolean}), the answer is nice:
\emph{every} invariant 
relation is a generalized quasiorder: $\gQuord\Pol\rho=\Inv\Pol\rho$. Moreover, in these cases we have
$\Inv\Pol\rho=[\rho]_{(\exists,\land,=)}=[\rho]_{(\land,=)}$ what
allows an explicit description of the generalized quasiorders as given in
the following lemma.

\begin{lemma}\label{lem:conjunctions}
Let $\rho\in\Rela[2](A)$ and $\sigma\in[\rho]_{(\land,=)}$ be $m$-ary. Then
$\sigma$ has the form
\begin{align*}
  \sigma=\{(a_{1},\dots,a_{m})\in A^{m}\mid  (a_{i},a_{j})\in\rho&\text{ for all } (i,j)\in
  E \text{ and }\\
  a_{i'}=a_{j'}&\text{ for all }(i',j')\in E'\},
\end{align*}
  for some subsets $E$ and $E'$ of $\{1,\dots,m\}^{2}$.
\end{lemma}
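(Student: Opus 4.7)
The plan rests on the characterization of $[\rho]_{(\land,=)}$ as the set of relations definable by a quantifier-free primitive positive formula in the language $\{\rho,=\}$. Since $\rho$ is binary and the variables at our disposal are $x_{1},\dots,x_{m}$, every atomic formula in this language takes one of exactly two shapes: $(x_{i},x_{j})\in\rho$ or $x_{i'}=x_{j'}$ with $i,j,i',j'\in\{1,\dots,m\}$. This is the crucial simplification that powers the whole argument.

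First I would express $\sigma\in[\rho]_{(\land,=)}$ as the solution set of some quantifier-free pp-formula $\phi(x_{1},\dots,x_{m})$. Since the only connective permitted (besides equality atoms) is conjunction, $\phi$ can be written as a finite conjunction $\bigwedge_{k}\phi_{k}$ of atomic formulas. Next, I would sort these atoms by type: let $E\subseteq\{1,\dots,m\}^{2}$ collect the index pairs $(i,j)$ appearing in the $\rho$-atoms, and let $E'\subseteq\{1,\dots,m\}^{2}$ collect the pairs $(i',j')$ appearing in the equality atoms. A tuple $(a_{1},\dots,a_{m})$ then lies in $\sigma$ precisely when it satisfies $\phi$, i.e., when $(a_{i},a_{j})\in\rho$ for all $(i,j)\in E$ and $a_{i'}=a_{j'}$ for all $(i',j')\in E'$, which is exactly the claimed form.

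The main obstacle, though conceptually mild, is reconciling the two equivalent ways $[\rho]_{(\land,=)}$ is described in the paper: as a closure under the operational constructions listed after Proposition~\ref{C2} (intersection, adding fictive components, identification and doubling of components, the trivial unary relation $A$) versus as the class of relations definable by quantifier-free pp-formulas. This equivalence is standard, but if the operational definition is taken as primitive one needs a routine induction on the construction steps, showing at each step how the step can be mirrored by suitable manipulation of $E$ and $E'$ (for instance, adding a fictive component leaves $E,E'$ unchanged while extending the arity; identification of two components relabels the indices; intersection unions $E$ and $E'$; and so on). Conversely, an induction on formula complexity translates each atomic formula and each conjunction into a short sequence of such operational steps. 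Either direction is pure bookkeeping, and no new idea is required beyond the observation about the shape of atomic formulas.
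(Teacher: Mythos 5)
Your argument matches the paper's proof exactly: the paper also observes that the only possible atomic formulas in a quantifier-free pp-formula over a binary $\rho$ and equality are $(x_i,x_j)\in\rho$ and $x_{i'}=x_{j'}$, and collects the corresponding index pairs into $E$ and $E'$. The extra remarks on reconciling the operational and formula-based descriptions of $[\rho]_{(\land,=)}$ are sound but not needed beyond what the paper already treats as standard.
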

The proof is obvious: with $E$ and $E'$, respectively, one collects all atomic
formulas of the form $(x_{i},x_{j})\in\rho$ and $x_{i'}=x_{j'}$,
respectively (other atomic formulas do not exist).\qed

Depending on properties (e.g. transitivity) of the concrete $\rho$ this
representation can be further simplified.

\begin{theorem}\label{thm:eqrel} Let $\rho\in\Equ(A)$ and $F=\Pol\rho$.
 Then each
  invariant relation of the clone
  $F$ is a generalized quasiorder and can be obtained from
  $\rho$ by a pp-formula without quantifiers, i.e.,
  \begin{align*}
    \gQuord F=\Inv F=[\rho]_{(\exists,\land,=)}=[\rho]_{(\land,=)}.
  \end{align*}

  \end{theorem}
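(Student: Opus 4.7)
The plan is to establish the four-part equality by combining the standard Galois closure, two trivial inclusions, Proposition~\ref{C2}, and one substantive quantifier-elimination step. The equality $\Inv F = [\rho]_{(\exists,\land,=)}$ is simply the standard Pol-Inv Galois closure recalled in Section~\ref{sec:prelim}, while $\gQuord F \subseteq \Inv F$ and $[\rho]_{(\land,=)} \subseteq [\rho]_{(\exists,\land,=)}$ are immediate from the definitions.

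Next I would obtain $[\rho]_{(\land,=)} \subseteq \gQuord F$ directly from Proposition~\ref{C2}. Since $\rho$ is an equivalence relation it is reflexive and transitive, and hence in particular a generalized quasiorder; moreover $\rho \in \Inv F$ because $F = \Pol\rho$, so $\rho \in \gQuord(A,F)$. Applying Proposition~\ref{C2} with $Q = \{\rho\}$ then yields $[\{\rho\}]_{(\land,=)} \subseteq \gQuord(A,F) = \gQuord F$.

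The heart of the proof is the remaining inclusion $[\rho]_{(\exists,\land,=)} \subseteq [\rho]_{(\land,=)}$, which I would prove by induction on the number of existential quantifiers. For the inductive step I would show that a single $\exists y$ can always be eliminated from a conjunction $\phi$ of atomic formulas of the forms $u = v$ and $(u, v) \in \rho$: if some atom of $\phi$ has the form $y = t$, substitute $t$ for $y$ everywhere in $\phi$ and drop the quantifier; otherwise, using the symmetry of $\rho$, every atom containing $y$ can be rewritten as $(y, t_k) \in \rho$ for some terms $t_1, \ldots, t_s$, and reflexivity, symmetry, and transitivity of $\rho$ together give that $\exists y\colon \bigwedge_{k=1}^{s} (y, t_k) \in \rho$ is equivalent to the quantifier-free formula $\bigwedge_{k,k'} (t_k, t_{k'}) \in \rho$ (vacuously true for $s \leq 1$, invoking the nonemptiness of $A$ when $s = 0$).

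The main obstacle is bookkeeping: atoms may involve two bound variables simultaneously, and chains of equalities among bound variables may appear. These are handled by processing the bound variables one at a time and using the equality-atom rule to propagate substitutions first; each elimination strictly decreases the quantifier count, so the induction terminates. It is precisely the symmetric-and-transitive nature of $\rho$ that allows an existential over $y$ in a pure $\rho$-conjunction to collapse to pairwise $\rho$-conditions among the remaining terms, and this is what distinguishes the equivalence-relation case from the partial-order case discussed afterwards.
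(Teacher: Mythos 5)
Your proposal is correct and follows essentially the same route as the paper: the inclusion chain $[\rho]_{(\land,=)}\subseteq\gQuord F\subseteq\Inv F=[\rho]_{(\exists,\land,=)}$ via Proposition~\ref{C2} and the Galois closure, followed by quantifier elimination using reflexivity, symmetry and transitivity of $\rho$. Your elimination step (collapsing $\exists y\colon\bigwedge_{k}(y,t_{k})\in\rho$ to $\bigwedge_{k,k'}(t_{k},t_{k'})\in\rho$, one bound variable at a time) is just a slightly more systematic organization of the paper's substitute-and-delete argument, resting on the same key equivalence.
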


  \begin{proof}
   We have $[\rho]_{(\land,=)}\subseteq_{\ref{C2}}\gQuord \Pol\rho\subseteq\Inv
   \Pol\rho=[\rho]_{(\exists,\land,=)}$. Thus it is enough to prove
   $[\rho]_{(\exists,\land,=)}\subseteq [\rho]_{(\land,=)}$ to make
   all inclusions to be the equality what will
   finish the proof of the proposition.

Let $\sigma_{\phi}\in[\rho]_{(\exists,\land,=)}$ be a relation
determined by some pp-formula $\phi(x_{1},\dots,x_{m})\equiv\exists
y_{1},\dots,y_{s}:\Phi(x_{1},\dots,x_{m},y_{1},\dots,y_{s})$ with free
variables $x_{1},\dots,x_{m}$ and bounded variables
$y_{1},\dots,y_{s}$ where $\Phi$ is a conjunction of atomic formulas
of the form $(z,z')\in\rho$ or $z=z'$ for variables $z,z'$. 
If there
is some $(z,z')\in \rho$ with $z\in\{x_{1},\dots,x_{m}\}$ and
$z'\in\{y_{1},\dots,y_{s}\}$, then the bounded variable $z'$ can be
substituted by $z$ everywhere and we get a formula $\phi'$ with less
bounded variables but defining the same relation
$\sigma_{\phi'}=\sigma_{\phi}$
 (this follows from the
reflexivity, symmetry and transitivity of $\rho$, for instance,
 $[\exists z': (z,z')\in\rho\land(x_{i},z')\in\rho]\iff[(z,x_{i})\in\rho]$,
here ``$\Longrightarrow$'' follows from transitivity (and
symmetry), for ``$\Longleftarrow$'' take $z':=z$).

If this reduction is done as long as possible and there remain
bounded variables then they are not ``connected'' with any free
variable and can be deleted (because they always can be evaluated by an arbitrary (fixed) constant). Thus
there is a quantifier-free formula $\phi''$ defining
$\sigma_{\phi}=\sigma_{\phi''}$. 
  \end{proof}

Remark: If $\rho$ is trivial, i.e., if $\rho\in\{\Delta_{A},\nabla_{A}\}$, then
$F=\Op(A)$ and $\Inv F$ is the set of all trivial (i.e.,
diagonal, cf.~Section~\ref{sec:prelim}(A)) relations. Otherwise $F$ is a maximal clone as already mentioned.

Now we turn to partial orders $\rho$ with $0$ and $1$ (then
$\Pol\rho$ is a maximal clone). An interesting observation is that then
each nontrivial generalized quasiorder $\sigma\in\gQuord\Pol\rho$ is
already a generalized 
partial order (i.e., $\sigma^{[2]}=\Delta_{A}$, cf.\
\ref{def:gPord}), but we postpone this result to the publication
\cite{JakPR2026} (in preparation).

At first we focus on lattice orders $\rho$. It
is remarkable that lattices can be characterized by the
 ``nice'' property, mentioned above (before \ref{lem:conjunctions}), as the following theorem shows.

\begin{theorem}\label{thm:latticeorder}
  Let $(A,\rho)$ be a poset with least and greatest element.The following are
  equivalent:
  \begin{enumerate}[label=\textup{(\Alph*)}]
\item\label{latticeorder-A} $\rho$ is a lattice order,
\item\label{latticeorder-B} $\gQuord \Pol\rho
    =\Inv\Pol\rho$,
\item\label{latticeorder-C}  $[\rho]_{(\land,=)}=[\rho]_{(\exists,\land,=)}$\,.
  \end{enumerate}
\end{theorem}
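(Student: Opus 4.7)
The plan is to prove the cycle $(A) \Rightarrow (C) \Rightarrow (B) \Rightarrow (A)$. The easy step $(C) \Rightarrow (B)$ follows from the Galois closure identity $\Inv\Pol\rho = [\rho]_{(\exists,\land,=)}$ combined with Proposition~\ref{C2}: since $\rho$ is a quasiorder and hence a generalized quasiorder, $[\rho]_{(\land,=)} \subseteq \gQuord\Pol\rho$; together with the trivial inclusion $\gQuord\Pol\rho \subseteq \Inv\Pol\rho$, the hypothesis (C) forces all three sets to coincide.

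For $(A) \Rightarrow (C)$ I would eliminate existential quantifiers one at a time using the lattice structure. After substituting out equality atoms, a pp-formula becomes a conjunction of $\rho$-atoms in free and bound variables. To remove a bound variable $y$, collect the atoms involving $y$ into $\{(u,y) \in \rho : u \in U_y\}$ and $\{(y,v) \in \rho : v \in V_y\}$ (atoms $(y,y) \in \rho$ are discarded by reflexivity). The key claim is
\begin{align*}
\exists y \colon \bigwedge_{u \in U_y}(u,y) \in \rho \land \bigwedge_{v \in V_y}(y,v) \in \rho \;\Longleftrightarrow\; \bigwedge_{u \in U_y,\, v \in V_y}(u,v) \in \rho,
\end{align*}
where the forward implication is transitivity of $\rho$, and the reverse uses the lattice operations to take $y := \bigvee U_y$ as a witness (using $0$ when $U_y = \emptyset$ and $1$ when $V_y = \emptyset$ in the degenerate cases). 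Iterating this one-variable elimination, with the other bound variables viewed as parameters, yields a quantifier-free formula defining the same relation.

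For $(B) \Rightarrow (A)$ I would argue contrapositively. If $\rho$ is not a lattice, then dualizing if necessary I may assume some pair $a,b \in A$ has no join. Since $A$ is finite and $1$ is a common upper bound, the set of upper bounds of $\{a,b\}$ has minimal elements; because $a \vee b$ fails to exist, it contains two distinct such elements $u_1, u_2$. The pp-defined $4$-ary relation
\begin{align*}
\sigma(x_1,x_2,y_1,y_2) := \exists z \colon (x_1,z), (x_2,z), (z,y_1), (z,y_2) \in \rho
\end{align*}
lies in $\Inv\Pol\rho$ and is reflexive (take $z=x$), but I claim it violates transitivity. Consider the $4 \times 4$ matrix $M$ whose diagonal is $(a, b, u_1, u_2)$ and whose off-diagonal entries are all $1$. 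Each row and each column has a single non-$1$ entry together with three $1$'s, so $z := 1$ witnesses its membership in $\sigma$, i.e., $\sigma \models M$. However, the diagonal tuple $(a, b, u_1, u_2)$ is not in $\sigma$: any witness $z$ would be an upper bound of $\{a, b\}$ below both $u_1$ and $u_2$, and the minimality of each $u_i$ among upper bounds of $\{a,b\}$ would force $z = u_1 = u_2$, contradicting $u_1 \neq u_2$. Hence $\sigma \in \Inv\Pol\rho \setminus \gQuord(A)$, refuting (B).

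The main obstacle I anticipate is the bookkeeping in the quantifier-elimination step of $(A) \Rightarrow (C)$, where atoms mentioning two bound variables must be tracked correctly so that iterative elimination always preserves the ``conjunction of $\rho$-atoms'' shape. The witnessing construction in $(B) \Rightarrow (A)$ is the most creative step, but becomes conceptually transparent once one observes that the absence of a join in a finite poset forces two distinct minimal upper bounds.
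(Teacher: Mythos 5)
Your step (B)$\Rightarrow$(A) contains a genuine error in the verification that $\sigma\models M$. Take row $3$ of your matrix, namely $(1,1,u_1,1)$. Membership in $\sigma$ requires a witness $z$ with $1\le z$, $1\le z$, $z\le u_1$ and $z\le 1$; the first condition forces $z=1$, and then $z\le u_1$ forces $u_1=1$, which is impossible since a minimal upper bound of $\{a,b\}$ equal to $1$ would be the \emph{least} upper bound (it would then be the only upper bound). So rows and columns $3$ and $4$ of your matrix do \emph{not} lie in $\sigma$, and your claimed witness $z:=1$ only works for the rows whose non-$1$ entry sits in a ``lower'' coordinate. The off-diagonal entries must be chosen asymmetrically: the paper's Example~\ref{ex:1} uses (in your coordinates) the matrix
$\left(\begin{smallmatrix} a&0&u_1&u_2\\ 0&b&u_1&u_2\\ u_1&u_1&u_1&1\\ u_2&u_2&1&u_2\end{smallmatrix}\right)$,
where row $1$ is witnessed by $z=a$, row $3$ by $z=u_1$, etc. Your argument that the diagonal $(a,b,u_1,u_2)$ is \emph{not} in $\sigma$ (via minimality of $u_1,u_2$ among upper bounds of $\{a,b\}$) is correct, and your derivation of two distinct minimal upper bounds from the failure of the lattice property is fine; only the matrix is wrong, and it is repairable exactly as above.

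The other two implications are sound. Your (C)$\Rightarrow$(B) is the same inclusion chain as in the paper. Your (A)$\Rightarrow$(C) takes a genuinely different route: the paper invokes the Baker--Pixley theorem (the majority term $(x\land y)\lor(y\land z)\lor(z\land x)$ reduces every invariant relation to its binary projections, which are then identified as $\Delta_A,\nabla_A,\rho,\rho^{-1}$ using maximality of $\Pol\rho$), whereas you eliminate quantifiers directly via the equivalence
$\exists y\,\bigl(\bigwedge_{u\in U}(u,y)\in\rho\land\bigwedge_{v\in V}(y,v)\in\rho\bigr)\iff\bigwedge_{u\in U,v\in V}(u,v)\in\rho$,
witnessed by $y:=\bigvee U$. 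This is correct (the iteration preserves the conjunctive shape, since atoms linking two bound variables just become $\rho$-atoms among the survivors), and it is more elementary and self-contained than the paper's argument, at the price of not exhibiting the structural fact that every invariant relation is determined by its binary projections.
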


\begin{proof}
 \ref{latticeorder-C}$\implies$\ref{latticeorder-B} directly follows from the following
  inclusion-chain (note that $\rho$ is a (generalized) quasiorder,
  thus the first inclusion follows from Proposition~\ref{C2}):
$[\rho]_{(\land,=)}\subseteq
[\rho]_{(\exists,\land,=)}\cap \gQuord(A)=\gQuord
\Pol\rho\subseteq\Inv\Pol\rho=[\rho]_{(\exists,\land,=)}$.

\ref{latticeorder-B}$\implies$\ref{latticeorder-A} (indirect):
Assume $\rho$ is \textbf{not} a lattice order. 
Then $\gQuord\Pol\rho\neq\Inv\Pol\rho $ because of Example~\ref{ex:1}.

\ref{latticeorder-A}$\implies$\ref{latticeorder-C}: 
We can apply the \textsc{Baker/Pixley}-theorem (\cite[2.1(1)-(2)]{BakP75}):
\it 
if (and only if) an algebra $\mathbf{A}$ 
has a $(d+1)$-ary near unanimity term operation then each subalgebra of a
(finite) direct product of algebras in $\mathcal{V}(\mathbf{A})$ (the
variety generated by $\mathbf{A}$) is uniquely
determined by it $d$-ary projections. \rm

If $\rho$ is a lattice order, then the algebra $\mathbf{A}=(A,\Pol\rho)$ has a
ternary near unanimity operation, namely the majority operation $(x\land
y)\lor(y\land z)\lor(z\land x)$. Consequently,
each invariant relation $\sigma\in\Inva[m]\Pol\rho$ (which is nothing
else than a subalgebra of the
direct power $(A,\Pol\rho)^{m}\in\mathcal{V}(\mathbf{A})$) is uniquely determined by its binary
projections $\sigma_{ij}:=\pr_{ij}(\sigma)=\{(x_{i},x_{j})\in A^{2}\mid \exists
x_{k}: (x_{1},\dots,x_{m})\in\sigma\}$.

Since
$\Pol\rho$ is a maximal clone and therefore $[\rho]_{(\exists,\land,=)}$ is a
minimal relational clone, for each non-trivial $\sigma_{ij}$ we must
have $\rho\in[\sigma_{ij}]_{(\exists,\land,=)}$. This implies
$\sigma_{ij}\in\{\rho,\rho^{-1}\}$ (what can be proved directly,
however an explicite proof can be found in the proof of
\cite[4.3.7]{PoeK79}).

Thus we have $\sigma_{ij}\in\{\Delta_{A},\nabla_{A},\rho,\rho^{-1}\}$,
therefore $\sigma_{ij}$ is definable by an atomic formula (existential
free), namely
$x_{i}=x_{j}$ or $x_{i}=x_{i}$ or $(x_{i},x_{j})\in\rho$ or
$(x_{j},x_{i})\in\rho$, respectively.

Consider the relation $\sigma':=\bigwedge_{1\leq i,j\leq
  m}\sigma_{ij}(x_{i},x_{j})$. Then, by construction, $\sigma'$ belongs to
$[\rho]_{(\land,=)}$ and has the same binary projections as $\sigma$.
Applying the \textsc{Baker/Pixley}-theorem, we get $\sigma=\sigma'$. 
Consequently $[\rho]_{(\exists,\land,=)}=[\rho]_{(\land,=)}$.
\end{proof}

Non-lattice
orders do not satisfy the conditions in
Theorem~\ref{thm:latticeorder}. We demonstrate this explicitly  
with the following example (provided by \textsc{G. Gyenizse}).

\begin{example}\label{ex:1} 
Let $(A,\rho)$ be a poset with least element $0$ and
  greatest element $1$ which is
  not a lattice. 
Then there exist $a,b\in A$
with no least upper bound, i.e., there exist $c,d$, both covering $a$
and $b$ (equivalently, there exist $c,d$ with no greatest lower
bound), see Figure~\ref{fig:1}.

\vspace{-3ex}
\begin{figure}[h]
  \begin{center}
      \includegraphics{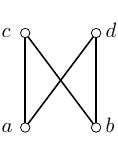}
  \end{center}\vspace*{-5ex}
\caption{Subgraph existing for the diagram of a non-lattice order relation}\label{fig:1}
\end{figure}

We also use the notation $\leq$ for $\rho$ here.
Let 
 \begin{align*}
    \sigma:=\{(x_{1},x_{2},x_{3},x_{4})\in A^{4}\mid \exists y:
x_{1}\leq y\leq x_{3}, x_{2}\leq y\leq x_{4}\}.
  \end{align*}
Then $\sigma$ is reflexive and belongs to $[\rho]_{(\exists,\land,=)}$ by construction, but $\sigma$ is \textbf{not} a
generalized quasiorder. In fact, we have
$\sigma\models\left(
  \begin{smallmatrix}
    a&0&c&d\\ 0&b&c&d\\ c&c&c&1\\d&d&1&d
  \end{smallmatrix}
\right)$ (it is easy to check that each row and column belongs to
$\sigma$), but the diagonal 
$(a,b,c,d)$ of this matrix does not belong 
to $\sigma$. Therefore $\sigma$ is not transitive and thus not a
generalized quasiorder. Consequently, $\gQuord\Pol\rho\subsetneqq\Inv\Pol\rho=[\rho]_{(\exists,\land,=)}$.
\end{example}

We now consider the special case $A=\{0,1\}$.
Let $\Mo$ be the maximal clone of
monotone Boolean operations, i.e., operations preserving the binary relation
$\rho:=\leq=\{(0,0),(0,1),(1,1)\}\subseteq A^{2}$:
$\Mo=\Pol\rho$. Since $\rho$ is a lattice order, by
Theorem~\ref{thm:latticeorder} we know $[\rho]_{(\land,=)}=\Inv\Mo=\gQuord\Mo\subseteq
\gQuord(A)$. But for the two-element set $A$ we get even more, namely
equality for the last inclusion.

  \begin{theorem}\label{thm:boolean} The generalized quasiorders on $A=\{0,1\}$
    coincide with the 
  invariant relations of the maximal clone $\Mo$ of monotone Boolean
  functions:
  \begin{align*}
    \gQuord(A)=\Inv\Mo =[\rho]_{(\land,=)}.
  \end{align*}
\end{theorem}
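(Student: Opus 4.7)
The plan is to exploit Theorem~\ref{thm:latticeorder} applied to the lattice order $\rho$ on $A=\{0,1\}$, which already yields $\Inv\Mo = [\rho]_{(\land,=)} = \gQuord\Mo$, and hence $\Inv\Mo \subseteq \gQuord(A)$. What remains is the reverse inclusion $\gQuord(A) \subseteq \Inv\Mo$; once this is established the three sets coincide.

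For $\gQuord(A) \subseteq \Inv\Mo$ I would invoke the classical Postian fact that the monotone clone $\Mo$ on $\{0,1\}$ is generated (over the projections) by the two binary operations $\land, \lor$ together with the two constants $0,1$. Thus, given an arbitrary $\sigma\in\gQuorda[m](A)$, it suffices to check that $\sigma$ is preserved by $\land$ and by $\lor$; the constant operations are automatic from reflexivity of $\sigma$, which furnishes $(0,\dots,0),(1,\dots,1)\in\sigma$.

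For closure under $\land$, I would pick any $\mathbf{a}=(a_1,\dots,a_m),\mathbf{b}=(b_1,\dots,b_m)\in\sigma$ and form the matrix $(c_{ij})_{i,j\in\um}$ with $c_{ij}:=a_j\land b_i$. Its diagonal is exactly $\mathbf{a}\land\mathbf{b}$; row $i$ equals $\mathbf{a}$ when $b_i=1$ and equals $(0,\dots,0)$ when $b_i=0$, while column $j$ equals $\mathbf{b}$ when $a_j=1$ and equals $(0,\dots,0)$ when $a_j=0$. In every case rows and columns lie in $\sigma$, so $\sigma\models(c_{ij})$, and transitivity of $\sigma$ delivers $\mathbf{a}\land\mathbf{b}\in\sigma$. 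The case of $\lor$ is strictly symmetric, using the matrix $c_{ij}:=a_j\lor b_i$ whose rows and columns are always one of $\mathbf{a}$, $\mathbf{b}$, or the all-ones tuple $(1,\dots,1)$.

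I do not foresee any serious obstacle: the key insight is simply to find a single $m\times m$ matrix whose diagonal is $\mathbf{a}\land\mathbf{b}$ (respectively $\mathbf{a}\lor\mathbf{b}$) and whose rows and columns all belong to the four-tuple family $\{\mathbf{a},\mathbf{b},(0,\dots,0),(1,\dots,1)\}\subseteq\sigma$; the choices $c_{ij}=a_j\land b_i$ and $c_{ij}=a_j\lor b_i$ accomplish precisely this, after which transitivity of $\sigma$ finishes the job.
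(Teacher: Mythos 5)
Your proposal is correct, but it takes a genuinely different route from the paper. The paper handles the inclusion $\gQuord(A)\subseteq\Inv\Mo$ abstractly: it invokes the result $\Pol\sigma=(\End\sigma)^{*}=\{f\mid \trl{f}\subseteq\End\sigma\}$ from the predecessor paper, observes that reflexivity forces $\End\sigma$ to contain both constants, and then uses the fact that on $\{0,1\}$ there are only two monoids containing all constants, namely $\{c_{0},c_{1},\id_{A}\}=\End\rho$ and $A^{A}$; in either case $\Pol\sigma\supseteq\Pol\rho=\Mo$. You instead argue concretely: since $\Mo$ is generated by $\land,\lor,c_{0},c_{1}$ and the operations preserving $\sigma$ form a clone, it suffices to check these generators, and your matrices $c_{ij}=a_{j}\land b_{i}$ and $c_{ij}=a_{j}\lor b_{i}$ do exactly what is needed --- their rows and columns all lie in $\{\mathbf{a},\mathbf{b},(0,\dots,0),(1,\dots,1)\}\subseteq\sigma$ and their diagonals are $\mathbf{a}\land\mathbf{b}$ and $\mathbf{a}\lor\mathbf{b}$, so generalized transitivity closes the argument. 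Your computation is in effect a hands-on instantiation of the defining property of generalized quasiorders (every translation of $\land$ and $\lor$ is either the identity or a constant, hence an endomorphism of any reflexive relation), so it is self-contained and avoids the external machinery of $M^{*}$ and the classification of submonoids of $A^{A}$; the paper's argument is shorter given that machinery and generalizes more readily to settings where explicit generators of the clone are not at hand. Both proofs rely on the same first half, namely Theorem~\ref{thm:latticeorder} giving $\Inv\Mo=[\rho]_{(\land,=)}\subseteq\gQuord(A)$.
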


\begin{proof}  As mentioned above we have
  $\Inv\Mo\subseteq\gQuord(A)$. It
  remains to show $\gQuord(A)\subseteq\Inv\Mo$. Let $\sigma\in\gQuord(A)$. We recall some
  notation and facts from \cite[Section~2, 2.2(***), Theorem~3.8]{JakPR2024}: For $M:=\End\sigma$ we
  have $\Pol \sigma=M^{*}:=\{f\in\Op(A)\mid \trl{f}\subseteq M\}$
  ($\trl{f}$ denotes the set of (unary) translations of $f$). In
  particular we have $(\End\rho)^{*}=\Pol\rho=\Mo$ since $\rho$ is a
  quasiorder and therefore also a generalized quasiorder.

Because
  of reflexivity of $\sigma$, the endomorphism monoid $M$ must contain
  all constants. On $\{0,1\}$ there are only two such monoids
  $M_{0}:=\{c_{0},c_{1},\id_{A}\}=\End\rho$  and
  $M_{1}:=\{c_{0},c_{1},\id_{A},\neg\}=A^{A}$ (where obviously
  $M_{1}^{*}=\Op(A)$).

Case 1: $\End\sigma=M_{0}$. Then
$\Pol\sigma=(\End\sigma)^{*}=M_{0}^{*}=(\End\rho)^{*}=\Pol\rho$ what implies
$\sigma\in\Inv\Pol\sigma=\Inv\Pol\rho = \Inv\Mo$.

Case 2:  $\End\sigma=M_{1}$. Then $\Pol\sigma=M_{1}^{*}=\Op(A)$,
thus $\sigma\in\Inv\Pol\sigma=\Inv\Op(A)\subseteq \Inv\Mo$.

In each case $\sigma\in\Inv\Mo$ what finishes the proof. 
\end{proof}

Motivated by the condition $[\rho]_{(\exists,\land,=)}=[\rho]_{(\land,=)}$ in
Theorems~\ref{thm:eqrel} and \ref{thm:latticeorder} and the fact
$[\rho]_{(\land,=)}\subseteq\gQuord(A)$ (cf.~\ref{C2}), 
there arises the challenging conjecture that
$[\rho]_{(\exists,\land,=)}\cap\gQuord(A)=[\rho]_{(\land,=)}$ might hold for
arbitrary generalized quasiorders $\rho\in\gQuord(A)$. We are more
modest here and formulate it only for partial orders (due to Theorem~\ref{thm:latticeorder} it is sufficient to
consider only non-lattice orders):

\begin{conjecture}\label{conj:partialorder}
  Let $\rho$ be a partial order with $0$ and $1$. Then 
  \begin{align*}
    \gQuord\Pol\rho&=[\rho]_{(\land,=)} \text{ or, equivalently,}\\
    [\rho]_{(\exists,\land,=)}\cap\gQuord(A)&=[\rho]_{(\land,=)}.
  \end{align*}
\end{conjecture}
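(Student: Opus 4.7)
The inclusion $[\rho]_{(\land,=)} \subseteq \gQuord\Pol\rho$ is immediate from Proposition~\ref{C2} (applied to $\{\rho\}$, since $\rho$ is a binary quasiorder and hence a generalized quasiorder), so the real content is the reverse inclusion $\gQuord\Pol\rho \subseteq [\rho]_{(\land,=)}$. My plan is to mimic the proof of Theorem~\ref{thm:latticeorder}, replacing the missing near-unanimity polymorphism by two structural facts specific to generalized quasiorders. As a first reduction I would invoke the forthcoming result from \cite{JakPR2026} cited just before the conjecture: every non-trivial $\sigma \in \gQuord\Pol\rho$ is already a generalized partial order, so $\sigma^{[2]} = \Delta_A$ and hence $\sigma^{\langle 2\rangle} = \Delta_A$ by Lemma~\ref{A3a}\ref{A3a-b}. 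Diagonal relations already lie in $[\rho]_{(\land,=)}$, so I may assume $\sigma$ is rigid in the strong sense that no two distinct elements of $A$ are exchangeable in $\sigma$.

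Next I would look at the binary projections $\sigma_{ij} := \pr_{ij}(\sigma)$ of an $m$-ary $\sigma$, each of which lies in $\Inva[2]\Pol\rho = [\rho]_{(\exists,\land,=)} \cap \Rela[2](A)$. Because $\Pol\rho$ is a maximal clone, $[\rho]_{(\exists,\land,=)}$ is a minimal relational clone, so every non-trivial $\sigma_{ij}$ generates $\rho$, i.e.\ $\rho \in [\sigma_{ij}]_{(\exists,\land,=)}$. A subsidiary step (which itself must be verified, and is plausibly a byproduct of \cite{JakPR2026}) is to show that for a bounded partial order $\rho$ the binary invariants of $\Pol\rho$ are few and each is quantifier-free definable from $\rho$ by a single atomic formula. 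Granting this, I would form
\begin{align*}
\sigma' := \bigl\{(a_1,\dots,a_m) \in A^m \bigm| \forall i,j \in \um:\ (a_i,a_j) \in \sigma_{ij}\bigr\} \in [\rho]_{(\land,=)},
\end{align*}
which contains $\sigma$ tautologically.

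The decisive step, and the main obstacle, is the converse inclusion $\sigma' \subseteq \sigma$. In the lattice case Baker--Pixley delivered this for free from the majority polymorphism; here the only structural tool available is generalized transitivity of $\sigma$ combined with the rigidity $\sigma^{\langle 2\rangle} = \Delta_A$. My intended strategy is: given $(a_1,\dots,a_m) \in \sigma'$, pick for each pair $(i,j)$ a witness tuple $t^{(ij)} \in \sigma$ with $t^{(ij)}_i = a_i$ and $t^{(ij)}_j = a_j$, and assemble an $m \times m$ matrix $M$ with diagonal $(a_1,\dots,a_m)$ whose every row and column is in $\sigma$, so that transitivity yields $(a_1,\dots,a_m) \in \sigma$. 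The rigidity of $\sigma$ should heavily constrain the admissible off-diagonal entries of $M$ via \ref{A3a}\ref{A3a-a}, since exchanging a coordinate outside its $\sigma^{\langle 2\rangle}$-class destroys membership in $\sigma$.

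The hard part, and in my view the heart of the conjecture, is carrying out this matrix assembly uniformly when $\rho$ contains an incomparable configuration such as the $(a,b,c,d)$-subposet of Example~\ref{ex:1}, which is precisely the obstruction that breaks the Baker--Pixley machinery. I expect a successful proof to induct on arity $m$ and to exploit a finer classification of the $m$-ary generalized partial orders in $\Inv\Pol\rho$ whose binary projections are all of the allowed forms, showing that the counterexample mechanism of Example~\ref{ex:1} is the only obstruction and is always already detected by transitivity; equivalently, that any pp-formula in $\rho$ whose defining relation escapes $[\rho]_{(\land,=)}$ must fail transitivity in the same way as the $4$-ary relation exhibited there.
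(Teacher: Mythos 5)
This statement is labelled a \emph{Conjecture} in the paper and is explicitly left open there (the authors only promise partial results in the forthcoming \cite{JakPR2026}), so there is no proof in the paper to compare yours against. More importantly, what you have written is not a proof either: you yourself defer the decisive inclusion $\sigma'\subseteq\sigma$ to an unspecified induction on arity and an unspecified ``finer classification'' of the $m$-ary generalized partial orders in $\Inv\Pol\rho$. That inclusion is the entire content of the conjecture. In the lattice case the paper gets it from the Baker--Pixley theorem via the majority polymorphism; for a non-lattice bounded order no near-unanimity operation is available, and Example~\ref{ex:1} shows concretely that an invariant relation need \emph{not} be determined by its binary projections, so the identity $\sigma=\sigma'$ cannot be expected to hold for arbitrary invariants and must be extracted specifically from generalized transitivity. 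Your proposed mechanism --- choosing witness tuples $t^{(ij)}\in\sigma$ with $t^{(ij)}_i=a_i$, $t^{(ij)}_j=a_j$ and assembling them into an $m\times m$ matrix $M$ with $\sigma\models M$ and diagonal $(a_1,\dots,a_m)$ --- is exactly the obstacle: the $\binom{m}{2}$ witnesses constrain one pair of coordinates each, and there is no argument that they can be merged into a single matrix all of whose rows \emph{and} columns lie in $\sigma$. Nothing in the rigidity condition $\sigma^{\langle 2\rangle}=\Delta_A$ is shown to force such a matrix to exist.

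Two further steps are also assumed rather than proved: (i) the reduction to generalized partial orders rests on an uncited, unproved result announced for \cite{JakPR2026}; (ii) the ``subsidiary step'' that every binary relation in $\Inv\Pol\rho$ is definable by a single atomic formula in $\rho$ is taken on faith (the paper proves the analogous fact only in the lattice case, via \cite[4.3.7]{PoeK79}). Even granting both, the proposal reduces the conjecture to a statement (``the counterexample mechanism of Example~\ref{ex:1} is the only obstruction'') that is itself no easier than the conjecture. In short: the plan is a reasonable description of why the problem is hard, but it contains no new idea that closes the gap, and the statement remains unproven.
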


For a non-lattice order $\rho$ we already know
$[\rho]_{(\land,=)}\subseteq[\rho]_{(\exists,\land,=)}\cap\gQuord(A)$ (by~\ref{C2}),
however it is not clear if there exists a ``counter\-example, i.e., a
generalized quasiorder which
is definable by a pp-formula but not by a quantifier free
pp-formula. In \cite{JakPR2026} we shall deal with
Conjecture~\ref{conj:partialorder} in more detail and give some
partial results.

\section{Generalized quasiorders in rectangular
  algebras}\label{sec:rectangular}

In Proposition~\ref{thm:rectangular} we shall see that rectangular
algebras are a source for many generalized quasiorders, in particular
generalized partial orders, and that one
axiom (namely (\textbf{AB}${}^{i}_{f}$)) characterizes the property
for the graph 
$\graph{f}$ of a function $f$ to be a generalized
quasiorder.

\begin{definition}\label{R1}  An algebra $(A,(f_{i})_{i\in I})=(A,F)$
 (of finite type) is called \New{rec\-tan\-gular algebra} if for all fundamental
  operations $f,g\in F$ ($f$ $n$-ary, $g$ $m$-ary) the following
  identities are satisfied:
\begin{enumerate}[leftmargin=8ex]
  \item[$(\mathbf{ID}_{f})$] $f(x,x, \ldots,x) \approx x$ \hfill(idempotence)
  \item[$(\mathbf{AB}^{i}_{f})$]
   $f(x_1, \ldots, x_{i-1},f(y_1,
     \ldots, y_{i-1},x_i,y_{i+1},\ldots ,
    y_n),x_{i+1}, \ldots, x_n) \approx f(x_1,...,x_n)$\\
     \hspace*{\fill} (absorption in each place $i\in\{1,\dots,n\}$)

  \item[$(\mathbf{C}_{f,g})$]
    $f(g(x_{11},\dots,x_{1m}),\dots,g(x_{n1},\dots,x_{nm}))$\\ 
  \hspace*{3ex}$\approx g(f(x_{11},\dots,x_{n1}),\dots,f(x_{1m},\dots,x_{nm}))$
  \hfill (commuting operations)
  \end{enumerate}

If $f$ is idempotent, then the absorption identities together
are equivalent to the following single identity
\begin{enumerate}[leftmargin=8ex]
\item[$(\mathbf{AB}_{f})$]
 $f(f(x_{11}, \ldots, x_{1n}),f(x_{21}, \ldots ,x_{2n}),
     \ldots, f(x_{n1}, \ldots,x_{nn})) \approx
     f(x_{11}, \ldots, x_{nn})$.
\end{enumerate}


\end{definition}

\begin{remark}\label{R2}
 Rectangular algebras with a single binary operation are just the
 rectangular bands. Rectangular algebras with a single $n$-ary
 operation are P\l onka's diagonal algebras \cite{Plo1966}.
 Moreover, algebras satisfying $(\mathbf{C}_{f,g})$ for all
 fundamental operations $f,g$ are called \New{entropic}. Idempotent entropic
 algebras were investigated by \textsc{Romanowska} and
 \textsc{Smith}, see, e.g., \cite{RomS1989}, called \New{modes} in
 \cite{RomS2002}. Thus rectangular algebras are special modes.

  It is known that the variety of rectangular algebras (of fixed
  finite type)
  is generated by all projection algebras (of the same type), i.e.,
  algebras where each fundamental operation is a projection. In
  particular, for finite type (i.e., $|I|$ finite), each finite rectangular
  algebra is isomorphic to a finite direct product of projection
  algebras, and each finite projection algebra is a 
  subalgebra of a finite direct product of two-element projection
  algebras. Moreover, the variety of rectangular algebras (as well as
  the variety of modes, cf.~\cite{RomS2002}) of fixed type is a
  so-called \New{solid variety}, i.e., the identities in Definition~\ref{R1}
  hold not only for the fundamental operations but also for all term
  operations. For details we refer to \cite{PoeR93}.
\end{remark}

The following proposition provides a large number of (higher-ary) generalized
quasiorders (which are even generalized partial orders), all being
graphs of operations. Moreover, those operations whose graphs are
generalized quasiorders, can be characterized (under certain
assumptions) by the property \ref{R1}$(\mathbf{AB}_{f})$.

\begin{theorem}\label{thm:rectangular}
  \begin{enumerate}[label=\textup{(\roman*)}]
  \item \label{rect1}
  Let $f: A^{n}\to A$ satisfy {\rm\ref{R1}$(\mathbf{C}_{f,f})$}. Then $f$ satisfies $(\mathbf{AB}_{f})$
    if and only if the graph $\graph{f}$ of $f$,
  \begin{align*}
    \graph{f}:=\{(a_{1},\dots,a_{n},b)\in A^{n+1}\mid f(a_{1},\dots,a_{n})=b\},
  \end{align*}
 is an $(n+1)$-ary generalized quasiorder. 

  \item\label{rect2} The graph
 $\graph{t}$ of each term operation $t$ of a rectangular algebra $(A,F)$ is
 a generalized partial order.
 \end{enumerate}
\end{theorem}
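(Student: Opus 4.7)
The plan is to prove (i) via a matrix-enlargement construction that directly matches the transitivity of $\graph{f}$ with the absorption identity $(\mathbf{AB}_f)$, and then to derive (ii) as a consequence of (i) together with the fact that the variety of rectangular algebras is solid. Reflexivity of $\graph{f}$ amounts to idempotence of $f$, so the main content of (i) is showing that the transitivity condition on $\graph{f}$ coincides with $(\mathbf{AB}_f)$, with $(\mathbf{C}_{f,f})$ playing the role of a consistency condition that allows the necessary matrices to be built.

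For the implication $\graph{f}\in\gQuord(A)\Longrightarrow(\mathbf{AB}_f)$, I would take an arbitrary $n\times n$ matrix $(x_{ij})$ and augment it to an $(n{+}1)\times(n{+}1)$ matrix $M$ by defining $M_{i,n+1}:=f(x_{i1},\dots,x_{in})$ for $i\le n$ (so that row $i$ lies in $\graph{f}$) and $M_{n+1,j}:=f(x_{1j},\dots,x_{nj})$ for $j\le n$ (so that column $j$ lies in $\graph{f}$). The corner entry $M_{n+1,n+1}$ must simultaneously equal $f$ applied to row $n{+}1$ and $f$ applied to column $n{+}1$; these two expressions coincide precisely by $(\mathbf{C}_{f,f})$, which lets us set $M_{n+1,n+1}$ to this common value. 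Then every row and column of $M$ lies in $\graph{f}$, and transitivity forces the diagonal $(x_{11},\dots,x_{nn},M_{n+1,n+1})$ into $\graph{f}$; unpacking this equality is exactly $(\mathbf{AB}_f)$.

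Conversely, assuming $(\mathbf{AB}_f)$, reflexivity (idempotence) is treated as implicit in this identity and gives reflexivity of $\graph{f}$. For transitivity, suppose $\graph{f}\models(c_{ij})_{i,j\in\underline{n+1}}$. The column hypotheses yield $c_{n+1,j}=f(c_{1j},\dots,c_{nj})$ for $j\le n$, and in particular $c_{n+1,n+1}=f(c_{1,n+1},\dots,c_{n,n+1})$; the row hypotheses yield $c_{i,n+1}=f(c_{i1},\dots,c_{in})$ for $i\le n$. Substituting the row equations into the last-column equation gives
\[
c_{n+1,n+1}=f\bigl(f(c_{11},\dots,c_{1n}),\,f(c_{21},\dots,c_{2n}),\,\dots,\,f(c_{n1},\dots,c_{nn})\bigr),
\]
which by $(\mathbf{AB}_f)$ equals $f(c_{11},c_{22},\dots,c_{nn})$, so the diagonal is indeed in $\graph{f}$. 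The most delicate point is keeping the reflexivity/idempotence bookkeeping clean, since this is where the precise reading of $(\mathbf{AB}_f)$ matters.

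For (ii), a term operation $t$ of a rectangular algebra satisfies all the identities $(\mathbf{ID}_t)$, $(\mathbf{AB}^i_t)$ and $(\mathbf{C}_{t,t})$ because the variety is solid (see Remark~\ref{R2}); combining $(\mathbf{ID}_t)$ with the $(\mathbf{AB}^i_t)$ yields $(\mathbf{AB}_t)$, so part~(i) gives $\graph{t}\in\gQuord(A)$. To upgrade this to a generalized partial order I would verify $\graph{t}^{[2]}=\Delta_A$: if $(a,b)\in\graph{t}^{[2]}$ then $\{a,b\}^{n+1}\subseteq\graph{t}$, and testing the tuple $(a,\dots,a,b)$ with $a$ in the first $n$ coordinates forces $t(a,\dots,a)=b$, while idempotence gives $t(a,\dots,a)=a$; hence $a=b$ and $\graph{t}\in\gPord(A)$, as required.
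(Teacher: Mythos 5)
Your proof takes essentially the same route as the paper: the transitivity condition for $(n{+}1)\times(n{+}1)$ matrices over $\graph{f}$ is matched directly with $(\mathbf{AB}_f)$, with $(\mathbf{C}_{f,f})$ guaranteeing that the corner entry is consistently defined, and (ii) follows from solidity together with the computation $(\graph{t})^{[2]}=\Delta_A$. The one caveat --- which the paper's own proof shares and which you at least flag --- is that $(\mathbf{AB}_f)$ alone does not entail idempotence (a constant operation satisfies both $(\mathbf{C}_{f,f})$ and $(\mathbf{AB}_f)$ yet has a non-reflexive graph), so reflexivity of $\graph{f}$ in the ``if'' direction really requires $(\mathbf{ID}_f)$ as an additional hypothesis; in (ii) this is supplied by $(\mathbf{ID}_t)$, so nothing is lost there.
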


\begin{remarknote}
  \ref{rect1} can be formulated as
  follows: For an 
 entropic algebra $(A,f)$ (cf.~\ref{R2}), the graph $\graph{f}$ is a generalized
 quasiorder if and only if 
 $f$ satisfies $(\mathbf{AB}_{f})$.
\end{remarknote}

\begin{proof} \ref{rect1}: Let $\graph{f}\models M$ for a matrix
    $M=
    \left( \begin{smallmatrix}
      a_{11}&\dots& a_{1n}& b_{1}\\
      \vdots& &\vdots&\vdots\\
      a_{n1}&\dots& a_{nn}& b_{n}\\
      c_{1}&\dots& c_{n}&d
    \end{smallmatrix}\right)$,
in particular (considering the first $n$ rows and columns only), we have $f(a_{i1},\dots,a_{in})=b_{i}$ and
  $f(a_{1i},\dots,a_{ni})=c_{i}$ for $i\in\{1,\dots,n\}$. Then, by
  ({\bf C}${}_{f,f}$), we automatically also have the condition for
  the last column and row:
$f(b_{1},\dots,b_{n})=d=f(c_{1},\dots,c_{n})$. 
Therefore, in $M$ the $a_{ij}$ can be chosen arbitrarily. Now it is
clear that the diagonal of $M$
belongs to $\graph{f}$, i.e., $f(a_{11},\dots,a_{nn})=d$, if and only
if $f$ satisfies ({\bf AB}${}_{f}$).

\ref{rect2}: Since in a rectangular algebra each term operation $t$ satisfies the
identities ({\bf ID}${}_{t}$), {\rm({\bf C}${}_{t,t}$)} and {\rm({\bf
    AB}${}_{t}$)} (by solidity as mentioned in Remark~\ref{R2}), we
conclude from \ref{rect1} that $\graph{t}$ is a generalized
quasiorder. Because $\{a,b\}^{n+1}\in \graph{t}$ implies
$(a,\dots,a,a),(a,\dots,a,b)\in\graph{t}$ and therefore
$a=t(a,\dots,a)=b$, we have $(\graph{t})^{[2]}=\Delta_{A}$, i.e.,
$\graph{t}$ is a generalized partial order (cf.~\ref{def:gPord} and \ref{gEq-4}).
\end{proof}

\begin{example}\label{ex:rect} The most well-known examples of
  rectangular algebras are rectangular bands. A \New{rectangular band}
  is usually defined as  a semigroup $(A,*)$ satisfying
\begin{align*}
  x*x&\approx x\tag{idempotence}\\
  x*y*z&\approx x*z\tag{absorption}
\end{align*}

The identities $(\mathbf{ID}_{*})$, $(\mathbf{AB}_{*})$ and
$(\mathbf{C}_{*,*})$ from Definition~\ref{R1} easily can be
checked. From Proposition~\ref{thm:rectangular}\ref{rect1} we conclude that
then the graph
  \begin{align*}
    \{(a_{1},a_{2},b)\in A^{3}\mid a_{1}*a_{2}=b\}
  \end{align*}
of $*$  is a ternary generalized partial order.
The standard example of a rectangular band is $(A\times A,*)$ (for some
nonempty set $A$) with the multiplication $*$ defined by $(a,b)*(c,d):=(a,d)$
(what can be visualized by drawing rectangles on lattice points in the plane if $A=\N$).
\end{example}

\begin{remark}\label{rem:abelian} For operations $f$ as considered in
  Theorem~\ref{thm:rectangular}\ref{rect1} exists an interesting connection to
  strongly abelian algebras (in the sense of tame congruence theory,
  \cite[Definition 3.10]{HobM88}, first defined in \cite{McK1983}). It was proved in \cite[Proposition~4.7]{Scholle2010} that
  an algebra $(A,f)$ with a single fundamental operation $f$ satisfies
  {\rm({\bf ID}${}_{f}$)} and {\rm({\bf  AB}${}_{f}$)} if and only if
  it is strongly abelian. 
\end{remark}

\section{Further research}\label{sec:further}

Because the investigation of generalized quasiorders just started with
their invention in \cite{JakPR2024} the field for further research is
wide and it remains to be seen which direction is most promising. In
Section~6 of \cite{JakPR2024} there are already mentioned some
directions ($\gQuord$-completeness as generalization of affine
completeness , characterization of u-closed
monoids and of the Galois closures $\gQuord\End Q$, structure of the lattices $\gQuorda[m](A)$ as well as of the
lattice $\cK_{A}^{(m)}$ of all such lattices for fixed base set $A$). 

With Section~\ref{sec:maxclones} in mind we may ask for the
generalized quasiorders of maximal clones (according to I. Rosenberg's
classification). This will be done in a publication \cite{JakPR2026}
which is still in preparation: each generalized quasiorder for a
maximal clone except those from Section~\ref{sec:maxclones} must be
trivial (i.e., a diagonal relation). The only open problem is the challenging
Conjecture~\ref{conj:partialorder}, for which at least some partial results
also will be given in \cite{JakPR2026}.

General quasiorders can serve for the investigation of the lattice
$\cL_{A}$ of all clones on a finite set $A$. For instance, for each
set $Q$ of generalized quasiorders of the form $Q=\gQuord F_{0}$ (for
some clone $F_{0}$) the set 
$\{F\leq \Op(A)\mid \gQuord F\subseteq Q\}$ 
forms an order-filter in the clone lattice.
For $Q=D_{A}$
we get the filter $\{F\leq \Op(A)\mid \gQuord F =D_{A}\}$ where the maximal elements are the maximal clones
mentioned above. What are the minimal elements in this filter?

In general one may ask how generalized quasiorders can influence the
(algebraic) structure of an algebra. For example, due to
Remark~\ref{rem:abelian} and Theorem~\ref{thm:rectangular}, 
an idempotent algebra $(A,f)$ with self-commuting $f$ (i.e., an
idempotent entropic algebra) is a strongly
abelian algebra if and only if  $\graph{f}$ is a generalized quasiorder.

\subsection*{Acknowledgement} 
The research of the first author was supported by the Slovak VEGA grant
1/0152/22. The authors thank David Stanovsk\'y for the Remark~\ref{rem:abelian}.

\def\cprime{$'$} \def\cprime{$'$}

Danica Jakub{\'\i}kov\'a-Studenovsk\'a:~Institute of
  Mathematics, P.J.~\v{S}af\'arik University,~Ko\v sice, \texttt{studend@gmail.com},

Reinhard P{\"o}schel: Institute of Algebra, Technische Universität Dresden,\\ \texttt{reinhard.poeschel@tu-dresden.de},

S\'andor Radeleczki: Institute of Mathematics, University of
    Miskolc,\\ \texttt{sandor.radeleczki@uni-miskolc.hu}.

\end{document}